\numberwithin{equation}{section}
\theoremstyle{plain}
\newtheorem{Th}{Theorem}[section]
\newtheorem{Lemma}[Th]{Lemma}
\newtheorem{Prop}[Th]{Proposition}
\theoremstyle{definition}
\newtheorem{Rem}[Th]{Remark}
\newtheorem{?}[Th]{Problem}
\newcommand{\im}{\operatorname{im}}
\newcommand{\R}{\mathbb{R}}
\renewcommand{\epsilon}{\varepsilon}
\newcommand{\abs}[1]{\lvert #1 \rvert}
\newcommand{\lpnorm}[3]{\abs{\abs{#1}}_{L^{#2}(#3)}}
\title{Concentration of quantum integrable eigenfunctions on a convex surface of revolution}
\author[M. Geis]{Michael Geis}
\address{Department of Mathematics, Northwestern University, 2033 Sheridan Road, Evanston, IL 60208}
\email{mlg@math.northwestern.edu}
\thanks{Partially supported by NSF RTG grant DMS-1502632.}
\thanks{Written in progress of a PhD in mathematics at Northwestern University}
\date{August 27, 2020}
\begin{document}

\maketitle

\begin{abstract}
Let $(S^2,g)$ be a convex surface of revolution and $H \subset S^2$ the unique rotationally invariant geodesic. Let $\varphi^\ell_m$ be the orthonormal basis of joint eigenfunctions of $\Delta_g$ and $\partial_\theta$, the generator of the rotation action. The main result is an explicit formula for the weak-* limit of the normalized empirical measures, $\Sigma_{m = -\ell}^\ell \lpnorm{\varphi^\ell_m}{2}{H}^2 \delta_{\frac{m}{\ell}}(c)$ on $[-1,1]$. The explicit formula shows that, asymptotically, the $L^2$ norms of restricted eigenfunctions are minimal for the zonal  eigenfunction $m = 0$, maximal for Gaussian beams $m = \pm 1$, and exhibit a $(1 - c^2)^{-\frac{1}{2}}$ type singularity at the endpoints. For a pseudo-differential operator $B$ we also compute the limits of the normalized measures $\sum_{m = -\ell}^\ell \langle B \varphi^\ell_m , \varphi^\ell_m \rangle \delta_{\frac{m}{\ell}}$.
\end{abstract}

\section{Introduction}

This article is concerned with concentration properties of an orthonormal basis of Quantum completely integrable Laplace eigenfunctions 

\begin{equation}
    -\Delta_g \varphi_{\lambda_j} = \lambda^2_j \varphi_{\lambda_j}
\end{equation}
 on a closed Riemannian manifold $(M,g)$ in the $\lambda_j \to \infty$ limit. The concentration of a sequence $\varphi_{\lambda_j}$ of eigenfunctions is often measured by studying the limits of matrix elements $\langle A \varphi_{\lambda_j} , \varphi_{\lambda_j} \rangle_{L^2(M)}$ of pseudodifferential operators, which are known as microlocal defect measures. One may also study concentration on a submanifold $H \subset M$ via the limits of $L^p$ norms of restricted eigenfunctions $\lpnorm{\varphi_{\lambda_j}|_{H}}{p}{H}$. We take a new approach to the study of eigenfunction concentration in the quantum completely integrable setting in the simple case of a convex surface of revolution $(S^2,g)$ where we can obtain explicit results. We let $\partial_\theta$ be the smooth vector field which generates the $S^1$ symmetry and we study an $L^2$ orthonormal basis of joint eigenfunctions  $\varphi^\ell_m$ of the commuting operators $-\Delta_g$ and $D_\theta = \frac{1}{i}\partial_\theta$:

$$\begin{cases}
    -\Delta_g\varphi^\ell_m = \lambda_\ell^2 \varphi^\ell_m \\
    D_\theta \varphi^\ell_m = m\varphi^\ell_m
\end{cases}$$

On a convex surface of revolution, there exists an operator $\widehat{I}_2$ which commutes with $-\Delta_g$ and $D_\theta$ which has a joint spectrum with $D_\theta$ consisting of a lattice of simple eigenvalues

$$\text{Spec}(\widehat{I_2}, D_\theta) = \{ (\ell,m) \in \mathbb{Z}^2 ~|~ \ell \geq 0 ; \abs{m} \leq \ell  \}$$

Thus $\widehat{I}_2 \varphi^\ell_m = \ell \varphi^\ell_m $, $D_\theta \varphi^\ell_m = m \varphi^\ell_m$. Our purpose is to study the relative rates of concentration of the eigenfunctions $\varphi^\ell_m$ along the equator, $H$, the unique rotationally invariant geodesic, within a single $\widehat{I}_2$ eigenspace. To do this we calculate the weak-* limits of the following $\textit{empirical measure}$:

\begin{equation} \label{eq:defnmuell}
    \mu_\ell = \frac{1}{M_\ell} \sum_{m = - \ell}^{\ell} \lpnorm{\varphi^\ell_m}{2}{H}^2  \delta_{\frac{m}{\ell}} 
\end{equation}

 The constant $M_\ell$ normalizes $\mu_\ell$, making it a probability measure on $[-1,1]$. If we view $c \in [-1,1]$ as a continuous version of the ratio $m/\ell$, the weak-* limits can be viewed as the asymptotic distribution of mass across the $\widehat{I}_2$ eigenspaces. In the calculation of the limit of $\mu_\ell$, we need to compute the weak-* limit of another family of empirical measures

\begin{equation} \label{eq:defnnuell}
    \nu_\ell(B) = \frac{1}{N_\ell(B)} \sum_{m = -\ell}^\ell \langle B\varphi^\ell_m , \varphi^\ell_m \rangle_{L^2(S^2,dV_g)} \delta_{\frac{m}{\ell}}
\end{equation}

Here $B \in \Psi^0$ is a homogeneous pseudo-differential operator of order zero.

\subsection{Statement of results}

In order to state the results, we need to briefly describe the underlying geometry. The principal symbols of $\widehat{I}_2$ and $\widehat{I}_1 = D_\theta$, $I_2$ and $I_1 = p_\theta$ are homogeneous, poisson commuting smooth functions on $T^*S^2 \setminus 0$ and are called the action variables for the geodesic flow. Their Hamiltonian flows are $2\pi$-periodic so their joint flow $\Phi_{\mathbf{t}}$ defines a homogeneous, Hamiltonian action of the torus $T^2$. The joint flow preserves level sets of both $I_2$ and $p_\theta$ and by homogeneity, all of the information is contained in the $I_2 = 1$ level set, which we denote by $\Sigma \subset T^*S^2 \setminus 0$. On $\Sigma$, $\abs{I_1} \leq 1$ and for $c \in [-1,1]$, we let $T_c = I_1^{-1}(c) \cap \Sigma$. For $c \neq \pm 1$, these level sets are diffeomorphic to $T^2$ and consist of a single orbit of the joint flow. The levels $T_{\pm 1}$ consist of $I_2$ unit covectors tangent to $H$ with the sign reflecting the orientation relative to $\partial_\theta$. We let $d\mu_L$ denote Liouville measure on $\Sigma$ and $d\mu_{c,L} = d\mu_L/dp_\theta$ denote Liouville measures on the regular tori $T_c$. The torus action $\Phi_{\mathbf{t}}$ commutes with the geodesic flow $G^t = \exp tH_{\abs{\xi}_g}$ and we can write 

\begin{equation} \label{eq:defnK}
    \abs{\xi}_g = K(I_1,I_2)
\end{equation}

For a smooth function $K$ on $\R^2\setminus 0$, homogeneous of degree 1. We let $(\omega_1,\omega_2) = \nabla_I K(I_1,I_2)$ be the so-called frequency vector associated to this action. The $\omega_i$ are themselves functions of the action variables $I_1,I_2$. For a homogeneous pseudo $B \in \Psi^0(S^2)$, we let $\sigma(B)$ denote its principal symbol and set $\widehat{\sigma(B)}(c) = \int_{T_c} \sigma(B) \, d\mu_{c,L}$. We also let $\omega(B) = \int_\Sigma \sigma(B) d\mu_L$  be the Liouville state on $B$. We note that for $(x,\xi) \in T_H S^2 \cap T_c$ we have,

$$p_\theta(x,\xi)^2 = \abs{\xi}_g^2 a(r_0)^2\cos^2 \phi = K(c,1)^2a(r_0)^2\cos^2\phi$$

where $\phi$ is the angle between the covector $\xi$ and $H$ and $r_0$ is the distance from the north pole to $H$ so that $H = \{ r = r_0 \}$. Let $\mathscr{L}(H)$ be the length of $H$. Then $a(r_0) = \mathscr{L}(H)/2\pi$.

\begin{Th}

Let $(S^2,g)$ be a convex surface of revolution where $g = dr^2 + a(r)^2d\theta^2$ in geodesic polar coordinates. Let $H \subset S^2$ be the equator, the unique rotationally invariant geodesic. Then in terms of action angle variables we have,

\leavevmode

\begin{enumerate}[label = (\alph*)]

    \item For every $f \in C^0([-1,1])$,
    
$$
\int_{-1}^{1} f(c) \, d\mu_\ell(c) = \frac{1}{M_\ell} \sum_{m = -\ell}^\ell \lpnorm{\varphi^\ell_m}{2}{H}^2 f\left(\frac{m}{\ell}\right)  \to \frac{1}{M} \int_{-1}^{1} f(c) \frac{\omega_2(c,1)}{\sqrt{1 - \frac{(2\pi)^2c^2}{K(c,1)^2\mathscr{L}(H)^2}}} \,dc
$$

    \item For any $f \in C^0([-1,1])$, 
    
$$
\int_{-1}^{1} f(c) \, d\nu_\ell(c) = \frac{1}{N_\ell(B)} \sum_{m = -\ell}^\ell \langle B\varphi^\ell_m , \varphi^\ell_m \rangle_{L^2(S^2,g)} f\left(\frac{m}{\ell} \right) \to \frac{1}{\omega(B)}\int_{-1}^{1} f(c) \widehat{\sigma(B)}(c) \,dc
$$

\end{enumerate}

The constant appearing in (a) is 

$$M = \int_{-1}^1 \frac{\omega_2(c,1)}{\sqrt{1 - \frac{(2\pi)^2c^2}{K(c,1)^2\mathscr{L}(H)}}} \,dc $$

and normalizes the limit measure to have mass 1 on $[-1,1]$. We note that when $c = \pm 1$, $T_c$ collapses to the set of $I_2$ unit covectors tangent to $H$. On $T_{\pm 1}$, $\omega_2 = \frac{\partial K}{\partial I_2} = \frac{\partial K}{\partial I_1} = a(r_0)^{-2} = \frac{(2\pi)^2}{\mathscr{L}^2(H)}$ and $\phi = 0$. The left hand of \eqref{eq:limitmuell} therefore blows up at $c = \pm 1$.

\end{Th}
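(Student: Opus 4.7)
\emph{Proof proposal.} The plan is to handle (b) first, using standard quantum completely integrable (QCI) matrix-element asymptotics for joint eigenfunctions of a QCI system, and then reduce (a) to a WKB analysis of a one-dimensional radial eigenvalue problem via separation of variables.

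\emph{Part (b).} Since $\varphi^\ell_m$ is a joint eigenfunction of $\widehat{I}_2$ and $D_\theta$, it is microlocally a Lagrangian state on the regular torus $T_{m/\ell}$. Because the tori $T_c$ are all orbits of the same $T^2$-action, the Liouville volume $V_0 := \mu_{c,L}(T_c)$ is independent of $c$. By the QCI symbolic calculus (Colin de Verdi\`ere, Charbonnel, Toth), matrix elements of pseudo-differential operators on $\varphi^\ell_m$ converge to uniform averages over this torus; hence uniformly in $m$,
\[
\langle B\varphi^\ell_m,\varphi^\ell_m\rangle = V_0^{-1}\widehat{\sigma(B)}(m/\ell) + O(\ell^{-1}).
\]
Then $\tfrac{1}{\ell}\sum_{|m|\le\ell} f(m/\ell)\,\langle B\varphi^\ell_m,\varphi^\ell_m\rangle$ is a Riemann sum of mesh $1/\ell$ that converges, by continuity of $\widehat{\sigma(B)}$, to $V_0^{-1}\int_{-1}^1 f(c)\widehat{\sigma(B)}(c)\,dc$. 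Setting $f\equiv 1$ gives $N_\ell(B)/\ell \to V_0^{-1}\omega(B)$, and (b) follows upon dividing (the $V_0$'s cancel).

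\emph{Part (a).} The $S^1$-symmetry yields the separation of variables
\[
\varphi^\ell_m(r,\theta) = (2\pi\,a(r))^{-1/2}\,u^\ell_m(r)\,e^{im\theta},
\]
with $u^\ell_m\in L^2(dr)$ of unit norm satisfying $-u'' + V_m u = \lambda_\ell^2 u$, where $V_m(r) = m^2/a(r)^2 + W(r)$ for a bounded $W$ depending only on $a$. A direct computation reduces the restriction norm to a pointwise value: $\|\varphi^\ell_m|_H\|_{L^2(H)}^2 = |u^\ell_m(r_0)|^2$. Since $a$ attains its maximum at $r_0$, $V_m$ is minimal there, and $r_0$ lies strictly between the classical turning points precisely when $(2\pi)^2 c^2 < K(c,1)^2\mathscr{L}(H)^2$ with $c = m/\ell$. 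In this regime, with semiclassical parameter $h = 1/\ell$, the WKB formula for an $L^2$-normalized bound state at an interior point reads
\[
|u^\ell_m(r_0)|^2 = \frac{2}{T(c)\,\sqrt{\lambda_\ell^2 - V_m(r_0)}}\cos^2\!\bigl(\ell\alpha(c) + O(1)\bigr) + O(\ell^{-1}),
\]
where $T(c)$ is the semiclassical radial period. Action-angle duality between the Hamiltonians $K$ and $K^2$ gives $T(c) = 2\pi/(K(c,1)\omega_2(c,1))$, while $\lambda_\ell^2 - V_m(r_0) = K(c,1)^2\ell^2\bigl(1 - \tfrac{(2\pi)^2 c^2}{K(c,1)^2\mathscr{L}(H)^2}\bigr) + O(\ell)$. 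Substituting and forming the Riemann sum reproduces the stated integrand up to the oscillatory factor $\cos^2(\ell\alpha(c)+O(1))$. Writing $\cos^2 = \tfrac{1}{2}(1+\cos(2\ell\alpha(\cdot)))$ and applying Riemann--Lebesgue (using $\alpha'(c)\neq 0$), the oscillatory part vanishes in the weak-$*$ limit and the factor of $2$ in the amplitude is absorbed into the average $\tfrac{1}{2}$, yielding the formula in (a) after dividing by the analogous normalization $M_\ell$.

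\emph{Principal obstacle.} The main technical difficulty is uniformity of the WKB asymptotic as $|c|\to 1$, where $r_0$ tends to a classical turning point and the WKB amplitude blows up. I would handle this via matched asymptotic expansions through an Airy-type transition region of width $O(\ell^{-2/3})$ around the turning point, combined with the integrability of the $(1-c^2)^{-1/2}$-type singularity of the limit integrand at $c = \pm 1$, so that this boundary layer contributes negligibly to the Riemann sum in the weak-$*$ limit against continuous $f$.
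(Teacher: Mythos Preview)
Your approach for (b) is correct and close in spirit to the paper, though the paper works with the full trace $\operatorname{Tr} f(D_\theta/\ell)\bar{B}\Pi_\ell$ and expands it via the FIO symbol calculus for $\Pi_\ell$, rather than invoking individual QCI matrix-element asymptotics and then Riemann-summing.

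For (a) your route is genuinely different. The paper never separates variables or uses WKB; it writes $\|\varphi^\ell_m|_H\|^2 = \langle\gamma_H^*\gamma_H\varphi^\ell_m,\varphi^\ell_m\rangle$, microlocally cuts $\gamma_H^*\gamma_H$ away from $N^*H$ and $T^*H$, and averages over the quantum torus action $U(\mathbf t)$ to obtain an operator $\bar V_\epsilon$ that splits as $P_\epsilon + F_\epsilon$ with $P_\epsilon\in\Psi^0$ and $F_\epsilon$ an FIO associated to a \emph{reflection} canonical relation. The main term is $\operatorname{Tr} f(D_\theta/\ell)P_\epsilon\Pi_\ell$, and the $F_\epsilon$ piece contributes only $O_\epsilon(1)$ because its canonical relation meets the diagonal only over $T^*H$, where the cutoff kills the symbol. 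Your WKB computation is more elementary and makes the factor $\omega_2/\sqrt{1-\cdots}$ transparent as (radial period)${}^{-1}\times$(classical radial momentum at $r_0$)${}^{-1}$; the paper's FIO framework, by contrast, would transfer to restrictions along hypersurfaces where no separation of variables is available.

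There is one real gap in your argument. The step ``apply Riemann--Lebesgue (using $\alpha'(c)\neq 0$)'' does not dispose of $\sum_{|m|\le\ell} g(m/\ell)\cos\bigl(2\ell\alpha(m/\ell)+O(1)\bigr)$: Riemann--Lebesgue is a statement about integrals, and the error in replacing this discrete sum by $\ell\int g(c)\cos(2\ell\alpha(c))\,dc$ is in general itself of size $\ell$ for an $O(\ell)$-frequency integrand (think of $\alpha'(c_0)\in\pi\mathbb Z$, where phases align). You need a genuine exponential-sum estimate---van der Corput/Weyl differencing or Poisson summation---to show this sum is $o(\ell)$, and that in turn requires checking a nondegeneracy condition on $\alpha$. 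In the paper this cancellation is precisely the content of the $F_\epsilon$-trace bound. Your identification of the turning-point layer at $|c|\to 1$ as the principal obstacle is correct; the paper handles it the same way you propose, by an $\epsilon$-cutoff whose remainder is shown (via the pointwise Weyl law) to contribute $O(\epsilon)$ uniformly in $\ell$, then sending $\epsilon\to 0$ after $\ell\to\infty$.
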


When $(S^2,g_{can})$ is the standard sphere, $\mathscr{L}(H) = 2\pi$, $K(c,1) = 1$ and $\omega_2(c,1) = 1$, hence 

\begin{equation} \label{eq:limitmuell}
    \frac{\omega_2(c,1)}{\sqrt{1 - \frac{(2\pi)^2c^2}{K(c,1)^2\mathscr{L}(H)^2}}} = \frac{1}{\sqrt{1 - c^2}}
\end{equation}

\begin{Rem}
It would be interesting to know when the above formula holds. It is plausible that this is true on an ellipsoid of revolution where one has explicit formulae for the frequencies $\omega_i$. We would also like to find the weak-* limit of the measure \eqref{eq:defnmuell} when $H$ is any latitude circle. We leave that for future investigations. 

\end{Rem}

The measures $\mu_\ell$ considered here are closely related to the empirical measures associated to a polarized toric K\"ahler manifold $L \to M^n$ studied in \cite{ZZ},

$$
\mu^z_k = \frac{1}{\Pi_{h^k}(z,z)} \sum_{\alpha \in kP \cap \mathbb{Z}^d} \abs{s_\alpha(z)}^2_{h^k} \delta_{\frac{\alpha}{k}}
$$

Here, $s_\alpha(z)$ are the holomorphic sections of $L^k$. These correspond to lattice points inside the $k^{th}$ dialate of a certain Delzant polytope $P \subset \R^n$. This polytope is the image of the moment map $\mu : M \to P$ associated to the torus action on $M$. In our setting, $M$ is analogous to the phase space energy surface $\Sigma = \{ I_2 = 1 \}  \subset T^*S^2$ with the moment map $I_1 : \Sigma \to [-1,1]$. The joint eigenfunctions of $\widehat{I}_2$-eigenvalue $\ell$ correspond to the lattice points inside the $\ell^{th}$ dialate of $I_1(\Sigma) = [-1,1]$ and are analogous to the holomorphic sections $s_\alpha$. In both cases the measures are dialated back to be supported on the image of the moment map and normalized to have mass 1. The submanifold $H$ plays the role of the continuous parameter $z \in M$ in the K\"ahler setting. In \cite{ZZ} it is shown that as $k \to \infty$, a central limit theorem type rescaling of these measures tends to a Gaussian measure centered on $\mu(z)$ while in our case the measures $\mu_\ell$ tend to an absolutely continuous limit which blows up at the end points with a $(1 - c^2)^{-\frac{1}{2}}$ type singularity. The blow-up reflects the fact that the Gaussian beams $m = \pm \ell$ are concentrated on $T^*H \cap \Sigma$ in phase space.

\vspace{5mm}

In addition, we codify the similarity of the operator $\widehat{I}_2$ on $(S^2,g)$ to the degree operator $A = \sqrt{-\Delta_{g_{can}} + \frac{1}{4}} - \frac{1}{2}$ on the round sphere $(S^2,g_{can})$ by showing that $\widehat{I}_2$ and $A$ are conjugate via a unitary Fourier integral operator that leaves invariant $D_\theta$, at least up to a finite rank operator. 

\begin{Th}
Let $(S^2,g)$ be a convex surface of revolution and $A = \sqrt{-\Delta_{g_{can}} + \frac{1}{4}} - \frac{1}{2}$ be the degree operator on the round sphere. There exists a homogeneous unitary Fourier integral operator $$W : L^2(S^2,g_{can}) \to L^2(S^2,g)$$ such that $[W,D_\theta] = 0$ and $W^* \widehat{I}_2 W = A + R$ where $R$ is a finite rank operator. Consequently, if $Y^\ell_m$ denotes the standard orthonormal basis of $L^2(S^2,g_{can})$ such that $AY^\ell_m = \ell Y^\ell_m$, $D_\theta Y^\ell_m = m Y^\ell_m$, then for $\ell$ large enough, there are constants $c^\ell_m$ with $\abs{c^\ell_m} = 1$ so that

\begin{equation}
WY^\ell_m = c^\ell_m \varphi^\ell_m
\end{equation}

\end{Th}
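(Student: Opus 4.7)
The plan is to construct $W$ as the quantization of a homogeneous symplectomorphism intertwining the two $T^2$-actions on $T^*S^2 \setminus 0$, refine it order by order so that $W^*\widehat{I}_2 W - A$ becomes a smoothing operator, and then upgrade ``smoothing'' to ``finite rank'' using integrality of the two spectra.

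For the classical step, the pair $(|\xi|_{g_{can}}, p_\theta)$ generates a homogeneous Hamiltonian $T^2$-action on $T^*S^2 \setminus 0$ with moment polytope $\{(s,t) : s \geq 0,\, |t| \leq s\}$, and by hypothesis $(I_2, p_\theta)$ generates another such action with the same moment polytope. Away from $T_{\pm 1}$ the regular level sets are single $T^2$-orbits parametrized by action-angle coordinates. I would build a homogeneous symplectomorphism $\chi : T^*S^2 \setminus 0 \to T^*S^2 \setminus 0$ with $\chi^* |\xi|_{g_{can}} = I_2$ and $\chi^* p_\theta = p_\theta$ by matching angle variables on the regular set and extending across the singular tori $T_{\pm 1}$ using the symplectic normal form for a Hamiltonian $T^2$-action at its codimension-two degenerate orbits (which is dictated by the shared Delzant polytope).

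Next I quantize $\chi$ as a homogeneous unitary Fourier integral operator $W_0$. Since $\chi$ is $S^1$-equivariant, the amplitudes in the FIO construction can be chosen invariant under the rotation action generated by $p_\theta$, so that $[W_0, D_\theta] = 0$ exactly. By Egorov, $W_0^* \widehat{I}_2 W_0$ has principal symbol $|\xi|_{g_{can}}$, so $Q_0 := W_0^* \widehat{I}_2 W_0 - A$ is a pseudodifferential operator of order $0$ commuting with $D_\theta$. Following the quantum Birkhoff normal form procedure of Colin de Verdi\`ere, I iteratively compose with unitaries $\exp(iF_k)$ for $D_\theta$-invariant pseudodifferential $F_k$ of order $-k$; the cohomological equations along the joint $T^2$-orbits that must be solved at each step are solvable because both $W_0^*\widehat{I}_2 W_0$ and $A$ have integer spectrum, and the averaged-in-orbit residues, being functions of the action variables alone, are determined by the spectrum via Bohr--Sommerfeld and hence coincide. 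Taking an asymptotic sum yields a unitary $W$ with $[W, D_\theta] = 0$ and $R := W^*\widehat{I}_2 W - A$ smoothing.

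To conclude, $R$ commutes with $A$ and $D_\theta$ and is therefore diagonal in the basis $Y^\ell_m$ with eigenvalues $r^\ell_m$. Since $W^*\widehat{I}_2 W$ and $A$ both have integer spectrum, $r^\ell_m \in \mathbb{Z}$; since $R$ is smoothing, $|r^\ell_m|$ decays faster than any power of $\ell$; so $r^\ell_m = 0$ for $\ell$ beyond some $\ell_0$, whence $R$ has finite rank. For such $\ell$, $WY^\ell_m$ is a unit joint eigenfunction of $(\widehat{I}_2, D_\theta)$ with eigenvalues $(\ell, m)$, and simplicity of the joint spectrum gives $WY^\ell_m = c^\ell_m \varphi^\ell_m$ with $|c^\ell_m| = 1$. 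The principal obstacle will be the analysis near the singular tori $T_{\pm 1}$: constructing $\chi$ and solving the normal-form cohomological equations across these degenerate orbits, where the action-angle description breaks down, requires careful use of the $T^2$-equivariant symplectic normal form and matching of Maslov indices for the two $T^2$-actions.
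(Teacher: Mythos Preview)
Your overall architecture matches the paper's: quantize the symplectomorphism $\chi$ intertwining the two torus actions to get a first approximation $W_0$, then run an order-by-order normal-form iteration (Guillemin's averaging) to reduce the remainder, and finally invoke integrality of the spectrum. The paper simply cites Lerman for the existence of $\chi$ rather than building it by hand, and obtains $[W_0,D_\theta]=0$ by averaging an arbitrary quantization over $\exp(itD_\theta)$ rather than by choosing invariant amplitudes; these are minor differences.

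There is, however, a real gap in your last paragraph. You assert that the smoothing remainder $R = W^*\widehat{I}_2 W - A$ \emph{commutes with $A$}, and then diagonalize $R$ in the basis $Y^\ell_m$ to run the ``integer plus rapidly decaying implies eventually zero'' argument. But the normal-form iteration does not by itself produce a remainder commuting with $A$. What the averaging method yields is $A + R_{-1}^{\#} + R_{-\infty}$, where $R_{-1}^{\#}=\sum_k (R_{-k})_{av}$ commutes with $A$ by construction while the smoothing tail $R_{-\infty}$ coming from the asymptotic summation has no reason to. Your Bohr--Sommerfeld claim that the averaged residues $(R_{-k})_{av}$ vanish at each step does not close this gap even if one grants it (and it is delicate: the spectral constraint applies to the full operator $A+R_{-1}^{\#}+R_{-\infty}$, not to its truncations), because you are still left with a smoothing $R_{-\infty}$ that need not commute with $A$, so $R$ is not diagonal in the $Y^\ell_m$ basis and the finite-rank conclusion does not follow.

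The paper fills this gap with an additional step you are missing: a contour-integral comparison of the spectral projectors of $A+R_{-1}^{\#}$ and $A+R_{-1}^{\#}+R_{-\infty}$ shows they differ by smoothing operators, and from this one builds a further unitary $L$ with $L-I$ smoothing and $[L,D_\theta]=0$ such that $L(A+R_{-1}^{\#}+R_{-\infty})L^{*}=A+R^{\#}$ with $[R^{\#},A]=0$. Only after this correction does the argument ``$R^{\#}\in\Psi^{-1}$, $[R^{\#},A]=0$, $\mathrm{Spec}(A+R^{\#})=\mathbb{N}$ $\Rightarrow$ $R^{\#}|_{V_\ell}=0$ for large $\ell$'' go through. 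You will need an analogous device.
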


In \cite{L}, Lerman proves that there is only one homogeneous hamiltonian action of the torus $T^2$ on $T^*S^2 \setminus 0$ up to symplectomorphism. In particular, letting $p_2(x,\xi) = \abs{\xi}_{g_{can}(x)}$ be the principal symbol of $A$, $p_\theta$ and $p_2$ generate such an action, so there is a homogeneous symplectomorphism $\chi$ on $T^*S^2 \setminus 0$ which pulls back the functions $p_\theta, I_2$ to $p_\theta$, $p_2$. Theorem 1.2 is essentially an operator theoretic version of this statement.

\subsection*{Acknowledgements}
I would like to thank Emmett Wyman for many helpful conversations regarding the symbol calculus of FIOs, as well as my advisor Steve Zelditch for his continual patience and guidance.

\subsection{Outline of the computation of weak-* limits}

We compute the weak-* limits of the measures \eqref{eq:defnnuell}, \eqref{eq:defnmuell} by expressing their un-normalized versions as a trace and using the symbol calculus of Fourier integral operators to compute the leading order contribution as $\ell \to \infty$. We refer to \cite{LPDO4},\cite{GuS} for background on Fourier integral operators and the symbol calculus. Let $\Pi_\ell : L^2(S^2,dV_g) \to L^2(S^2,dV_g)$ denote the orthogonal projection onto the $\widehat{I}_2 = \ell$ eigenspace. Suppose that $A : C^\infty(S^2) \to C^\infty(S^2)$ is an operator which commutes with $D_\theta$. Then the kernel of the operator 

\begin{equation}
    f \left( \frac{D_\theta}{\ell} \right) A\Pi_\ell
\end{equation}

is equal to 

\begin{equation}
    \sum_{m = -\ell}^\ell A\varphi^\ell_m(x)\overline{\varphi^\ell_m(y)} f\left( \frac{m}{\ell} \right)
\end{equation}

And thus we have 

\begin{equation} \label{eq:traceformulaintro}
    \text{Trace} \, f \left( \frac{D_\theta}{\ell} \right) A\Pi_\ell = \sum_{m = -\ell}^\ell \langle A\varphi^\ell_m , \varphi^\ell_m \rangle f\left( \frac{m}{\ell}\right)
\end{equation}

We use this formula to compute the weak-* limits of both sequences of empirical measures. When $A$ is a pseudo-differential operator, this forumula returns the unnormalized measures \eqref{eq:defnnuell} tested against $f$. To use this formula for the measures $\eqref{eq:defnmuell}$, we express the $L^2$ norms on $H \subset S^2$ as a global matrix element as follows: let $\gamma_H : C^\infty(S^2) \to C^\infty(H)$ denote restriction to $H$ and $\gamma^*_H$ denote the $L^2$ adjoint of $\gamma_H$ with respect to the Riemannian volume measure $dV_g$. Thus, for $g \in C^\infty(H)$, $f \in C^\infty(S^2)$ we have 

$$\langle \gamma^*_H g , f \rangle_{L^2(S^2,dV_g)} = \int_H  g f|_{H} \, dS$$

where $dS$ is the induced surface measure. From this it follows that 

$$\lpnorm{\varphi^\ell_m}{2}{H,dS}^2 = \langle \gamma_H^*\gamma_H \varphi^\ell_m , \varphi^\ell_m \rangle$$

One problem with this setup is that \eqref{eq:traceformulaintro} requires the operator $A$ to commute with $D_\theta$, and this will not be true for every pseudo $B \in \Psi^0(S^2)$ nor for the operator $\gamma_H^*\gamma_H$. We deal with this by averaging against the torus action generated by $D_\theta$ and $\widehat{I}_2$. For $\mathbf{t} = (t_1,t_2) \in T^2$, let

\begin{equation}
    U(\mathbf{t}) = \exp i[t_1 D_\theta + t_2 \widehat{I}_2]
\end{equation}

 In section 3 we review that this is a torus action on $L^2(S^2, dV_g)$ by unitary Fourier integral operators. For any operator $A : C^\infty(S^2) \to C^\infty(S^2)$ we set 
 
 \begin{equation} \label{eq:torusavg}
     \bar{A} = (2\pi)^{-2} \int_{T^2} U(\mathbf{t})^*AU(\mathbf{t}) \, d\mathbf{t}
 \end{equation}

The average $\bar{A}$ commutes with both $D_\theta$ and $\widehat{I}_2$ since 

\begin{equation}
    [D_\theta, \bar{A}] = (2\pi)^{-2} \int_{T^2} -\partial_{t_1}[U(\mathbf{t})^*AU(\mathbf{t})] d\mathbf{t} = 0
\end{equation}

And similarly for $\widehat{I}_2$. We also note that

$$\langle A \varphi^\ell_m , \varphi^\ell_m  \rangle_{L^2(S^2), dV_g} = \langle \bar{A} \varphi^\ell_m , \varphi^\ell_m  \rangle_{L^2(S^2,dV_g)}$$

This means replacing $A$ with $\bar{A}$ in the trace will not change the right hand side of $\eqref{eq:traceformulaintro}$. When $A \in \Psi^0(S^2)$, Egorov's theorem tells us that $\bar{A} \in \Psi^0(S^2)$ as well, and 

$$\sigma(\bar{A}) = (2\pi)^{-2} \int_{T^2} \Phi_{\mathbf{t}}^*\sigma(A) \, d\mathbf{t}$$

where $\Phi_\mathbf{t}$ is the joint flow generated by $I_1 = p_\theta$ and $I_2$. In section 4, we analyze the averaged restriction operator

\begin{equation}
    \bar{V} = (2\pi)^{-2} \int_{T^2} U^*(\mathbf{t})(\gamma_H^*\gamma_H)U(\mathbf{t}) \, d\mathbf{t}
\end{equation}

And show that, after applying microlocal cutoffs to $\gamma^*_H\gamma_H$, it splits into the sum of a pseudo-differential operator and a Fourier integral operator. The canonical relation of the non-pseudo-differential part of $\bar{V}$ is related to the notion of a mirror reflection map on covectors based on $H$ (See section 4 for details). Both summands can be made to commute with $U(\mathbf{t})$. The strategy of using the operator $\bar{V}$ to study restricted $L^2$ norms (and more generally restricted $\Psi$DO matrix elements) has been used in \cite{TZ1} and we closely follow their analysis here. As mentioned, for this analysis to work we need to microlocally cut off $\gamma_H^*\gamma_H$ away from both $N^*H$ and $T^*H$. Literally speaking we fix $\epsilon > 0$ and instead work with the operator

\begin{equation}
    (\gamma_H^*\gamma_H)_{\geq \epsilon} = (1 - \widehat{\chi}_{\epsilon/2})(\gamma_H^*\gamma_H)(1 - \widehat{\chi}_\epsilon)
\end{equation}

Where $(I - \widehat{\chi}_\epsilon)$ is a homogeneous pseudo-differential operator with wave front set outside conic neighborhoods of both $N^*H$ and $T^*H$. The cutoff away from the normal directions is technical and related to the choice to use the homogeneous calculus, while the cutoff away from the tangential directions is necessary since otherwise the canonical relation of $\bar{V}$ would be singular. We show in section 5 that we can use the cutoff operator $(\gamma_H^*\gamma_H)_{\geq \epsilon}$ to compute the weak-* limits of \eqref{eq:defnmuell} by letting $\epsilon \to 0$ afterwards.

\section{Quantum toric integrability for convex surfaces of revolution}

Let $(S^2,g)$ be a surface of revolution. We denote the two fixed points of the $S^1$ action by $N$ and $S$. Fix a meridian geodesic $\gamma_0$ which joins $N$ to $S$ and let $(r,\theta)$ denote geodesic polar coordinates from $N$, i.e. so that the curve $r \mapsto (r,0)$ is the arc length parametrized geodesic $\gamma_0$. In these coordinates the metric takes the form 

$$g = dr^2 + a(r)^2 d\theta^2$$

for some smooth function $a : [0, L] \to \R{}$ such that $a^{2k}(0) = a^{2k}(L) = 0$ and $a'(0) = 1$, $a'(L) = 1$. Here $L$ is the distance between the poles. A convex surface of revolution is one such that $a(r)$ has exactly one non-degenerate critical point which is a maximum, $a''(r_0) < 0$. The latitude circle $H = \{ (r = r_0) \}$ is the unique rotationally invariant geodesic.

\vspace{5mm}

Recall that we say the Laplacian $-\Delta_g$ of a Riemannian manifold $(M^n,g)$ is quantum completely integrable if there exists $n$ first order homogeneous pseudo-differential operators $P_1, \dots, P_n \in \Psi^1(M)$ satisfying:

\begin{itemize}
    \item $[P_i , P_j = 0]$
    \item $\sqrt{-\Delta_g} = K(P_1, \dots , P_n)$ for some polyhomogeneous function $K \in C^\infty(\R^n \setminus 0)$
    \item If $p_j = \sigma(P_j)$ are the principal symbols, the regular values of the associated moment map $\mathcal{P} = (p_1,\dots,p_n) : T^*M \setminus 0 \to \R^n \setminus 0$ form an open, dense subset of $T^*M$.
\end{itemize}

For background on quantum integrable Laplacians, see chapter 11 of \cite{cbms}. If $(S^2,g)$ any surface of revolution, and $D_\theta = \frac{1}{i} \partial_\theta$ is the self-adjoint differential operator associated to the generator of the $S^1$ action, it is clear by writing $\Delta_g$ in polar coordinates that $[\Delta_g , D_\theta] = 0$. Hence every surface of revolution is quantum completely integrable by taking $P_1 = \sqrt{-\Delta_g}$ and $P_2 = D_\theta$. The third condition is satisfied, for instance, if $a(r)$ is assumed to be Morse. In the special case of a convex surface of revolution, Colin de Verdi\`ere in \cite{CdV} has shown that the Laplacian is quantum toric completely integrable. This means that there exists $\widehat{I}_1 , \widehat{I}_2$ first order, homogeneous, commuting pseudo-differential operators satisfying the above conditions of quantum complete integrability, but with the additional property that 

\begin{equation} \label{eq:toricint}
\exp 2\pi i \widehat{I}_j = \text{Id}
\end{equation}

In particular, one can take $\widehat{I}_1 = D_\theta$ and $\widehat{I}_2$ to be self-adjoint and elliptic. Note that condition \eqref{eq:toricint} implies that the joint spectrum of $\widehat{I}_1,\widehat{I}_2$ is a subset of $\mathbb{Z}^2$. In fact it is shown in \cite{CdV} that it consists of all simple eigenvalues and 

\begin{equation}
    \text{Spec}(\widehat{I}_1,\widehat{I}_2) = \{(m,\ell) \in \mathbb{Z}^2 ~|~ \abs{m} \leq \ell ; \ell > 0 \}
\end{equation}

We fix a particular orthonormal basis of joint eigenfunctions $\{\varphi^\ell_m\}$ satisfying $\widehat{I}_2 \varphi^\ell_m = \ell \varphi^\ell_m$ and $D_\theta \varphi^\ell_m = m \varphi^\ell_m$.

\subsection{The moment map and classical toric integrability}

Let $I_j = \sigma(\widehat{I}_j)$ be the principal symbols. The associated moment map $\mathcal{P} = (I_1,I_2) : T^*S^2 \setminus 0 \to \R^2 \setminus 0$ has image equal to the closed conic wedge 

$$\mathcal{B} = \{(x,y) ~|~ \abs{x} \leq y ; y > 0 \}$$

The set of critical points, $Z$, of $\mathcal{P}$ consists of covectors lying tangent to the equator. If $(\rho,\eta)$ are the dual coordinates to $(r,\theta)$ on the fibers of $T^*S^2$,

$$Z = \{(r_0,\theta,0,\eta) ~|~ \eta \neq 0 \} = T^*H \setminus 0$$

$\mathcal{P}$ maps $Z$ to the boundary $\partial \mathcal{B}$, so the interior of $\mathcal{B}$ consists entirely of regular values. Consider a regular level set of the form $T_c = \mathcal{P}^{-1}(1,c)$, for $c \in (-1,1)$. By homogeneity, all other regular levels are dialates of these. For each $c$, $T_c$ is connected and diffeomorphic to a torus $T^2 \cong \R/2\pi \mathbb{Z} \times \R/2\pi \mathbb{Z}$. The singular levels correspond to $c = \pm 1$ and are equal to the set of covectors $T_{\pm 1} = \{(r_0,\theta,0,\pm 1)\}$. One consequence of quantum toric integrability is of course classical toric integrability. That is, letting $H_{I_j}$ denote the hamilton vector fields of $I_j$, equation $\eqref{eq:toricint}$ implies that both $H_{I_j}$ generate $2\pi$-periodic flows. Since $\{I_1 , I_2 \} = 0$, we let for $\mathbf{t} = (t_1,t_2) \in T^2$,

\begin{equation}
    \Phi_\mathbf{t} : T^2 \times T^*S^2 \setminus 0 \to T^*S^2 \setminus 0
\end{equation}

$$\Phi_\mathbf{t}(x,\xi) = \exp t_1 H_{I_1} \circ \exp t_2 H_{I_2}(x,\xi)$$

The joint flow $\Phi_\mathbf{t}$ thus defines a homogeneous, Hamiltonian action of $T^2$ on $T^*S^2 \setminus 0$ which commutes with the geodesic flow $G^t = \exp tH_{\abs{\xi}_g}$. It preserves the level sets of the moment map and each torus $T_c$ consists of a single orbit of the joint flow. 

\subsection{The standard torus action on $T^*S^2$}

In \cite{L}, Lerman shows that up to symplectic equivalence, there is only one homogeneous Hamiltonian action of $T^2$ on $T^*S^2 \setminus 0$. The simplest example of a convex surface of revolution is the standard sphere $(S^2, g_{can})$. For the standard sphere we can take $\widehat{I}_2 = A = \sqrt{-\Delta_{g_{can}} + \frac{1}{4}} - \frac{1}{2}$, the so-called degree operator. The associated torus action on $T^*S^2$ is generated by $\abs{\xi}_{g_{can}}$ and $p_\theta$. If $I_1 = p_\theta$ and $I_2$ are the action variables associated to a convex surface of revolution, there is a homogeneous symplectomorphism 

$$\chi : T^*S^2 \setminus 0 \to T^*S^2 \setminus 0$$ 

such that $\chi^*p_\theta = p_\theta$ and $\chi^*I_2 = \abs{\xi}_{g_{can}}$. Theorem 1.2 is the statement that the symplectic equivalence of the torus action on a convex surface of revolution to that of the round sphere  can be quantized. That is, the generators of the standard torus unitary torus action $D_\theta$ and $A$ on the round sphere are unitarily conjugate via a homogeneous Fourier integral operator to the quantized action operators $\widehat{I}_j$ on any convex surface of revolution.

\section{The Quantum torus action}

In this section we briefly review the fact that the commuting operators $\widehat{I}_1 = D_\theta$ and $\widehat{I}_2$ on a convex surface of revolution $(S^2,g)$ together generate an action of $T^2$ on $L^2(S^2,dV_g)$ by unitary Fourier integral operators. (See for instance p. 245 of \cite{cbms}). For $\mathbf{t} = (t_1,t_2) \in T^2$ we set 

\begin{equation}
   U(\mathbf{t}) = \exp i[t_1 D_\theta + t_2 \widehat{I}_2] 
\end{equation}

\begin{Prop}
The operator $U(t_1,t_2)$ is a homogeneous Fourier integral operator belonging to the class $I^{-\frac{1}{2}}(T^2 \times S^2 \times S^2 ; C_U)$. Its canonical relation is given by the space-time graph of the joint flow

    $$C_U = \{(t_1,p_\theta(x,\xi), t_2 , I_2(x,\xi) , y , \eta , x, \xi) ~|~ (y,\eta) = \Phi_{(t_1,t_2)}(x,\xi) \, ; \, (x,\xi) \in T^*S^2 \setminus 0 \}$$

The half density part of the symbol $\sigma(U)$ pulls back along the parametrizing map 

$$\iota : (t_1,t_2,x,\xi) \mapsto (t_1,p_\theta(x,\xi), t_2 , I_2(x,\xi) ,\Phi_{(t_1,t_2)}(x,\xi), x, \xi)$$

to the half density $\abs{dt_1 \wedge dt_2}^\frac{1}{2} \otimes \abs{dx \wedge d\xi}^\frac{1}{2}$ on $T^2 \times T^*S^2$.

\end{Prop}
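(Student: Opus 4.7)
My plan is to reduce Proposition 3.1 to the standard theory of the half-wave parametrix. Since $[D_\theta, \widehat{I}_2] = 0$, the functional calculus gives the factorization $U(t_1,t_2) = e^{it_1 D_\theta}\cdot e^{it_2 \widehat{I}_2}$, so it suffices to understand each exponential as a parameter-dependent FIO and then compose.

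For each self-adjoint first-order classical pseudo-differential operator $P \in \{D_\theta, \widehat{I}_2\}$, I would invoke the H\"ormander--Duistermaat construction of the half-wave propagator: one builds the Schwartz kernel of $e^{itP}$ on $\R \times S^2 \times S^2$ as an oscillatory integral whose phase solves the eikonal equation $\partial_t \psi + \sigma(P)(x,\nabla_x\psi) = 0$ with initial condition $\psi|_{t=0} = \langle x-y,\xi\rangle$, and whose amplitude solves the associated transport equation with initial value $1$. The resulting distribution lies in $I^{-1/4}(\R \times S^2 \times S^2; C_P)$, where $C_P$ is the space-time graph of $\exp tH_{\sigma(P)}$, and its canonical half-density pulls back to $|dt \wedge dy \wedge d\eta|^{1/2}$ under the natural parametrization $(t,y,\eta)\mapsto(t,-\sigma(P)(y,\eta),\exp tH_{\sigma(P)}(y,\eta), y,\eta)$. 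This is classical (see H\"ormander Vol.~IV Thm.~29.1.1, or Ch.~12 of \cite{cbms}).

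Next I would compose the two parameter-dependent kernels by the fiber-product integration over the intermediate $S^2$ factor, while retaining both $t_1$ and $t_2$ as distinct coordinates on $T^2$. Because $\{p_\theta, I_2\} = 0$, the Hamilton flows of $p_\theta$ and $I_2$ commute, the composition of canonical relations is clean, and the composed canonical relation sits in $T^*(T^2 \times S^2 \times S^2)$ as precisely $C_U$: the space-time graph of the joint flow $\Phi_{\mathbf{t}}$. The two orders $-1/4$ add to the asserted $-1/2$, and the product of the pulled-back half-density symbols becomes $|dt_1 \wedge dt_2|^{1/2} \otimes |dx \wedge d\xi|^{1/2}$ under $\iota$. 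Homogeneity of the FIO is then inherited from the homogeneity of the principal symbols $p_\theta$ and $I_2$.

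The main obstacle is verifying cleanness of the composition and controlling the behavior over the critical set $Z = T^*H$ of the moment map $\mathcal{P}$, where $H_{p_\theta}$ and $H_{I_2}$ become linearly dependent. Away from $Z$ the composition is transversal and the FIO calculus applies directly; on $Z$, which has codimension at least two in $T^*S^2 \setminus 0$, one can either localize via microlocal cutoffs in an open conic neighborhood of the regular set and use the global definition of $U(\mathbf{t})$ by functional calculus to extend the oscillatory integral representation across $Z$, or argue by continuity of the phase and amplitude in the WKB ansatz. A secondary technical point is confirming that the orientation and Maslov factors in the pullback of the symbol under $\iota$ work out trivially; this is a direct check using the explicit parametrization of $C_U$ by $(t_1,t_2,x,\xi)$.
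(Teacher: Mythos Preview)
Your approach is essentially the same as the paper's: factor $U(\mathbf{t}) = e^{it_1 D_\theta}\, e^{it_2 \widehat{I}_2}$, identify each factor as an FIO of order $-\tfrac14$ with space--time graph canonical relation (the paper handles $e^{it_1 D_\theta}$ by direct computation, since it is pullback by a diffeomorphism, and invokes Duistermaat--Guillemin for $e^{it_2 \widehat{I}_2}$), and then compose.

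The one place you diverge is in the discussion of cleanness and the critical set $Z$. This concern is unnecessary. The paper observes that the composition is \emph{transversal}, not merely clean, because for each fixed time the canonical relation of either factor is the graph of a symplectomorphism of $T^*S^2 \setminus 0$; composition with a canonical graph is always transversal, irrespective of whether $H_{p_\theta}$ and $H_{I_2}$ are linearly independent at the point in question. The linear dependence on $Z = T^*H$ would matter if you were pushing forward along the $T^2$ fibers (as occurs later in the analysis of $\bar V_\epsilon$), but it plays no role in composing the two propagators here. Your proposed microlocal localization and continuity argument across $Z$ can therefore be dropped, and the Maslov check is likewise immediate once transversality is in hand.
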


\begin{proof}

Since $\exp it_1D_\theta$ just acts by pulling back a function along the flow of the vector field $\partial_\theta$, one can check in coordinates that this is a Fourier integral operator in the class $I^{-\frac{1}{4}}(S^1 \times S^2, S^2 ; C) $ where 

$$C = \{t_1 , p_\theta(x,\xi), y,\eta,x,\xi) ~|~ (y,\eta) = \exp t_1 H_{p_\theta}(x,\xi) ; (x,\xi) \in T^*S^2\setminus 0 \}$$

The half density symbol pulls back along the parametrizing map 

$$\iota : (t_1 , x,\xi) \mapsto (t_1 , p_\theta(x,\xi), \exp t_1 H_{p_\theta}(x,\xi) , x, \xi)$$

to $\abs{dt_1}^\frac{1}{2} \otimes \abs{dx \wedge d\xi}^\frac{1}{2}$. Now $I_2$ is a first order, self-adjoint, elliptic pseudo-differential operator with integer spectrum, so by \cite{DG} we have that $\exp it_2\widehat{I}_2 \in I^{-\frac{1}{4}}(S^1 \times S^2 \times S^2; C' )$ where

$$C' = \{t_2 , I_2(x,\xi) , y,\eta,x,\xi) ~|~ (y,\eta) = \exp t_2H_{I_2}(x,\xi) ; (x,\xi) \in T^*S^2 \setminus 0 \}$$

Now the composition of $C$ with $C'$ is transverse since they are essentially canonical graphs. By standard transverse composition of FIOs the orders add and we get the description of $U(\mathbf{t})$ stated in the proposition.  

\end{proof}

\section{Restricted $L^2$ norms as matrix elements}

In order to calculate the weak-* limit of \eqref{eq:defnmuell} using trace formulae, we need to relate the restricted $L^2$ norms of the joint eigenfunctions to matrix elements. Let

$$\gamma_H : C^\infty(S^2) \to C^\infty(H)$$

be the operator which restricts functions to $H$. Then if $\gamma^*_H$ is the $L^2$ adjoint, we have

$$\lpnorm{\varphi^\ell_m}{2}{H}^2 = \langle \gamma^*_H\gamma_H \varphi^\ell_m , \varphi^\ell_m \rangle_{L^2(S^2,dV_g)}$$

and since $\varphi^\ell_m$ are joint eigenfunctions of $\widehat{I}_j$, we can replace $\gamma_H^*\gamma_H$ with the average 

$$\bar{V} = (2\pi)^{-2}\int_{T^2} U(\mathbf{t})^*(\gamma_H^*\gamma_H)U(\mathbf{t}) \, d\mathbf{t}$$

without changing the above matrix elements. The problem is the operator $\bar{V}$ has a singular canonical relation. To fix this, we replace $\gamma_H^*\gamma_H$ with a microlocally cut off operator $(\gamma_H^*\gamma_H)_{\geq \epsilon}$ described below. After doing this,

$$\bar{V}_\epsilon = (2\pi)^{-2}\int_{T^2} U(\mathbf{t})^*(\gamma_H^*\gamma_H)_{\geq \epsilon}U(\mathbf{t}) \, d\mathbf{t} $$

becomes a genuine Fourier integral operator and we calculate its order and symbolic data.

\subsection{The cutoff restriction operator on the sphere}

Let 

$$T^*_H S^2 = \{(x,\xi) \in T^*S^2 ~|~ x \in H \}$$ 

Denote the set of covectors with footprint on $H$. Since $\gamma_H$ is just pullback along the inclusion map, it is a Fourier integral operator associated with the pullback canonical relation 

$$C = \{(x,\xi|_{TH}, x, \xi)~|~ (x,\xi) \in T_H^*S^2 \setminus 0 \} \subset T^*H \times T^*S^2$$

The left factor contains elements of the zero section whenever $\xi \in N^*H$, so it is not a homogeneous Fourier integral operator in the sense of \cite{LPDO4}. Because of this defect, the wave front set of $\gamma_H^*\gamma_H$ is

\begin{equation}
    WF'(\gamma^*_H \gamma_H) = C_H \cup N^*H \times 0_{T^*M} \cup 0_{T^*M} \times N^*H
\end{equation}

Where $C_H \subset T^*M \setminus 0 \times T^*M \setminus 0$ is the homogeneous canonical relation

$$C_H = \{(x,\xi,x,\xi') ~|~ (x,\xi),(x,\xi') \in T^*_H S^2 \setminus 0 ; \xi|_{T_xH} = \xi'|_{T_xH} \}$$

Note that since $\partial_\theta$ is tangent to $H$, $(x,\xi)|_{TH} = (x,\xi')|_{TH}$ is equivalent to $I_1(x,\xi) = I_1(x,\xi')$. In order to get rid of the last two components of wave front set, we insert microlocal cutoff operators as in \cite{TZ1}. In this setting we can take them to be functions of the action operators $\widehat{I}_j$. Let $\phi_\epsilon$ and $\psi_\epsilon$ be smooth cutoff functions on $\R{}$ such that

\begin{equation}
    \phi_\epsilon(x) = \begin{cases} 1 ~ \text{for}~  \abs{x} \leq \epsilon/2 \\
    0 ~\text{for}~ \abs{x} > \epsilon
    \end{cases}
\end{equation}

\begin{equation}
    \psi_\epsilon(x) = \begin{cases} 1 ~ \text{for}~  \abs{x} > 1 - \epsilon/2 \\
    0 ~\text{for}~ \abs{x} < 1 - \epsilon
    \end{cases}
\end{equation}

Then we set $\widehat{\chi}_\epsilon^n = \phi_\epsilon(\frac{\widehat{I}_1}{\widehat{I}_2})$ and $\widehat{\chi}_\epsilon^t = \psi_\epsilon(\frac{\widehat{I}_1}{\widehat{I}_2})$. Finally set $\widehat{\chi}_\epsilon = \widehat{\chi}^n_\epsilon + \widehat{\chi}^t_\epsilon$. Note that the operator
$(I - \widehat{\chi}_\epsilon)$ has no wave front set in a conic $\epsilon/2$ neighborhood of both $N^*H$ and $T^*H$. We now define

\begin{equation}
    (\gamma_H^*\gamma_H)_{\geq \epsilon} = (I - \hat{\chi}_{\epsilon/2})\gamma_H^*\gamma_H(I - \hat{\chi}_\epsilon)
\end{equation}

\begin{equation}
    (\gamma^*_H \gamma_H)_{\leq \epsilon} = \hat{\chi}_{\epsilon/2} \gamma_H^*\gamma_H \hat{\chi}_{\epsilon}
\end{equation}

\begin{Prop} \label{prop:restrictiondecomp}
We have the decomposition 

\begin{equation}
   \gamma_H^*\gamma_H =  (\gamma_H^*\gamma_H)_{\geq \epsilon} + (\gamma_H^*\gamma_H)_{\leq \epsilon} + K_\epsilon
\end{equation}

where $\langle K_\epsilon \varphi_{\lambda_j} , \varphi_{\lambda_j} \rangle_{L^2(S^2,dV_g)} = O_\epsilon(\lambda_j^{-\infty})$ and $\varphi_{\lambda_j}$ are any orthonormal basis of eigenfunctions of $-\Delta_g$.

\end{Prop}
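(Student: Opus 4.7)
The identity $\gamma_H^*\gamma_H = (\gamma_H^*\gamma_H)_{\geq\epsilon} + (\gamma_H^*\gamma_H)_{\leq\epsilon} + K_\epsilon$ is purely algebraic. Expanding the sum $(I-\widehat{\chi}_{\epsilon/2})\gamma_H^*\gamma_H(I-\widehat{\chi}_\epsilon) + \widehat{\chi}_{\epsilon/2}\gamma_H^*\gamma_H\widehat{\chi}_\epsilon$ and comparing with $\gamma_H^*\gamma_H$ identifies the remainder as
\[
K_\epsilon = \widehat{\chi}_{\epsilon/2}\gamma_H^*\gamma_H(I-\widehat{\chi}_\epsilon) + (I-\widehat{\chi}_{\epsilon/2})\gamma_H^*\gamma_H\widehat{\chi}_\epsilon.
\]
The structural fact I plan to exploit is that $\widehat{\chi}_{\epsilon/2}$ and $(I-\widehat{\chi}_\epsilon)$ are built by functional calculus from the commuting self-adjoint elliptic operators $\widehat{I}_1,\widehat{I}_2$, and the nested scales $\epsilon/2<\epsilon$ ensure that their symbols $\phi_{\epsilon/2}+\psi_{\epsilon/2}$ and $1-\phi_\epsilon-\psi_\epsilon$, read as scalar functions of the ratio $I_1/I_2$, have disjoint supports. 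Consequently $\widehat{\chi}_{\epsilon/2}(I-\widehat{\chi}_\epsilon)=0$ and $(I-\widehat{\chi}_{\epsilon/2})\widehat{\chi}_\epsilon=0$ as bounded operators, so each summand of $K_\epsilon$ can be rewritten as a pseudodifferential cutoff composed with the commutator $[\gamma_H^*\gamma_H,\widehat{\chi}_\epsilon]$.

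Next I would apply the wave-front composition calculus. The normal part of $\widehat{\chi}_\epsilon$ annihilates the zero-section components $N^*H\times 0$ and $0\times N^*H$ of $\text{WF}'(\gamma_H^*\gamma_H)$, reducing the analysis to the homogeneous canonical relation $C_H$. Combining with the disjoint-support conclusion, $\text{WF}'(K_\epsilon)$ is contained in a subset $C_H^{\mathrm{off}}\subset C_H$ consisting of pairs $((x,\xi),(x,\xi'))$ for which $I_1(\xi)=I_1(\xi')$ (automatic on $C_H$) but the values of $I_1/I_2$ at the two endpoints lie in disjoint ranges; in particular $I_2(\xi)\neq I_2(\xi')$ and $\xi\neq\xi'$.

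The matrix-element decay then follows from a non-stationary-phase argument. From $|\xi|_g = K(I_1,I_2)$ in \eqref{eq:defnK} with $K$ strictly monotone in $I_2$ on the principal branch, two endpoints of a point in $C_H^{\mathrm{off}}$ sharing a common value of $I_1$ cannot both lie on the same energy shell $\{|\xi|_g = \lambda_j\}$. Since $\varphi_{\lambda_j}$ is microlocally concentrated on that shell, pairing the Fourier integral kernel of $K_\epsilon$ against $\varphi_{\lambda_j}\otimes\overline{\varphi_{\lambda_j}}$ produces an oscillatory integral whose phase has non-vanishing gradient on the essential support of the integrand; iterated integration by parts yields the claimed $O_\epsilon(\lambda_j^{-N})$ bound for every $N$. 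The main technical care will be controlling derivative losses in the narrow transition bands of $\phi_\epsilon,\psi_\epsilon$, where the symbol estimates carry powers of $\epsilon^{-1}$; these absorb into the implicit constants $C_{\epsilon,N}$ without affecting the order of decay in $\lambda_j$.
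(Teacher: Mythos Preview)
The paper does not prove this proposition itself; it simply cites section~9.1.1 of \cite{TZ1}. So there is no argument in the paper to compare against, but your proposal has a genuine gap.

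Your algebraic identification of $K_\epsilon$ is correct, but the claim that both $\widehat{\chi}_{\epsilon/2}(I-\widehat{\chi}_\epsilon)=0$ and $(I-\widehat{\chi}_{\epsilon/2})\widehat{\chi}_\epsilon=0$ is false: the nesting only goes one way. Since $\mathrm{supp}\,\chi_{\epsilon/2}\subset\{\chi_\epsilon=1\}$, the first product vanishes, but $\chi_\epsilon(1-\chi_{\epsilon/2})$ is supported on the nonempty transition bands (roughly $\epsilon/4\le |I_1/I_2|\le\epsilon$ and the mirror band near $1$) and is not identically zero. Hence the second summand $(I-\widehat{\chi}_{\epsilon/2})\gamma_H^*\gamma_H\,\widehat{\chi}_\epsilon$ of $K_\epsilon$ cannot be reduced to a commutator as you propose, and its wave-front relation contains diagonal points of $C_H$ with $I_1/I_2$ in that transition band; your set $C_H^{\mathrm{off}}$ does not capture it and the non-stationary-phase step fails there. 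Concretely, on a joint eigenfunction $\varphi^\ell_m$ one computes directly
\[
\langle K_\epsilon\varphi^\ell_m,\varphi^\ell_m\rangle=\bigl(1-\chi_{\epsilon/2}(m/\ell)\bigr)\,\chi_\epsilon(m/\ell)\,\lpnorm{\varphi^\ell_m}{2}{H}^2,
\]
which is not $O(\lambda^{-\infty})$ when $m/\ell$ sits in the transition band. This suggests that the statement, read literally for individual eigenfunctions, is already too strong with the cutoff definitions as written; what the paper actually needs downstream is only a Weyl-summed bound, and you should either aim for that weaker statement or rearrange the cutoff scales so that both cross-terms carry cutoffs with genuinely disjoint supports.
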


For the proof of this, see section 9.1.1 in \cite{TZ1}. We also quote the following description of the cutoff restriction operator:

\begin{Prop}
For each $\epsilon > 0$, $(\gamma_H^*\gamma_H)_{\geq \epsilon}$ is a Fourier integral operator in the class $I^{\frac{1}{2}}(M, M ; C_H)$ where $C_H$ is the homogeneous canonical relation 

\begin{equation}
    C_H = \{(x,\xi,x,\xi') \in T^*_H S^2 \setminus 0 \times T_H^*S^2 \setminus 0 ~|~ I_1(x,\xi) = I_1(x,\xi') \}
\end{equation}

In polar coordinates $(r,\theta,\rho,\eta)$ on $T^*S^2$, the set $C_H$ is parametrized by the map 

$$\iota_{C_H} : (\theta,\eta,\rho,\rho') \mapsto (r_0,\theta,\rho,\eta,r_0,\theta,\rho',\eta)$$

The half density part of the symbol of $(\gamma_H^*\gamma_H)_{\geq \epsilon}$ pulls back under $\iota_{C_H}$ to the half density 

\begin{equation}
    (1 - \chi_{\epsilon/2})(r_0,\theta,\rho,\eta)(1-\chi_{\epsilon})(r_0,\theta,\rho',\eta) \abs{d\theta \wedge d\eta \wedge d\rho \wedge d\rho'}^\frac{1}{2}
\end{equation}

\end{Prop}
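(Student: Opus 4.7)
The strategy is to realize $(\gamma_H^*\gamma_H)_{\geq\epsilon}$ as a transverse (excess-zero) composition of two Fourier integral operators of order $\tfrac14$, using the cutoffs $(I-\widehat{\chi}_\epsilon)$ to excise the singular part of the canonical relation of $\gamma_H^*\gamma_H$ at $N^*H\cup T^*H$, and then to read off the order, the canonical relation, and the principal symbol from the standard composition formula.

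First I recall that restriction to the codimension-one submanifold $H$ gives $\gamma_H\in I^{1/4}(H,S^2;C)$ where
\[
C=\{(x,\xi|_{T_xH},x,\xi):(x,\xi)\in T_H^*S^2\},
\]
and its half-density symbol lifts to the standard coordinate half-density under the obvious parametrization; see e.g.\ \cite{LPDO4}. The adjoint $\gamma_H^*\in I^{1/4}(S^2,H;C^t)$ carries the transposed relation. The obstruction to forming the composition as a homogeneous FIO is that $C$ meets the zero section of $T^*H$ along $N^*H$, which is exactly what produces the $N^*H\times 0\cup 0\times N^*H$ components of the wave front set in the display preceding the proposition. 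The principal symbol of $\widehat{I}_1/\widehat{I}_2$ is $p_\theta/I_2$, vanishing on $N^*H$ and equal to $\pm 1$ on $T^*H$, so $(I-\widehat{\chi}_\epsilon)\in\Psi^0$ is elliptic outside conic $\epsilon/2$-neighborhoods of $N^*H\cup T^*H$.

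Next I would verify that, with these cutoffs inserted on both sides, the composition $C^t\circ C$ is transverse with excess zero on the microsupport of the cutoffs. In adapted polar coordinates $(r,\theta,\rho,\eta)$, $C$ is cut out by $r=r_0$ with the restricted covector equal to $(\theta,\eta)$ and $\rho$ unconstrained on the $T^*S^2$ factor, so the composed relation is
\[
C_H=\{(r_0,\theta,\rho,\eta,r_0,\theta,\rho',\eta):\eta\neq 0\},
\]
smoothly parametrized by $(\theta,\eta,\rho,\rho')$. A dimension count $\dim C+\dim C^t-\dim T^*H=3+3-2=4=\dim C_H$ confirms transversality, and the order of the composition is $\tfrac14+\tfrac14=\tfrac12$ by the transverse composition formula.

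Finally, to obtain the symbol I would apply the transverse composition formula for principal symbols of FIOs. The half-density symbols of $\gamma_H$ and $\gamma_H^*$, pulled back under $\iota_{C_H}$, combine in these coordinates to $|d\theta\wedge d\eta\wedge d\rho\wedge d\rho'|^{1/2}$, and the principal symbols of the two pseudodifferential cutoffs, evaluated at the outgoing and incoming covectors, give the factors $(1-\chi_{\epsilon/2})(r_0,\theta,\rho,\eta)$ and $(1-\chi_\epsilon)(r_0,\theta,\rho',\eta)$. The main obstacle I expect is the bookkeeping of half-densities through the composition, in particular the verification that no nontrivial Jacobian factor is absorbed into the parametrization $\iota_{C_H}$; this is a routine but notationally heavy invocation of the composition formula in \cite{LPDO4}, while all other steps are formal consequences of the homogeneous FIO calculus together with the properties of $\widehat{I}_1,\widehat{I}_2$ already established in Section 3.
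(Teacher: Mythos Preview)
Your proposal is correct and is essentially an unpacking of the paper's own argument: the paper does not prove this from scratch but simply invokes Lemma 18 of \cite{TZ1} (with $\mathrm{Op}_H(a)=\mathrm{Id}$), noting that the geodesic polar coordinates $(r,\theta)$ are Fermi normal coordinates along $H$. Your transverse-composition argument for $\gamma_H^*\circ\gamma_H$ with the cutoffs inserted is precisely the content of that cited lemma in this special case, so the approaches coincide.
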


This follows from Lemma 18 in \cite{TZ1} setting $\text{Op}_H(a) = \text{Id}$, because the geodesic polar coordinates $(r,\theta)$ are Fermi normal coordinates along $H$.

\subsection{The $I_2$ reflection map and the set $\widehat{C}_H$}

Here we include more geometric preliminaries to the description of the averaged restriction operator 

\begin{equation}
    \bar{V}_\epsilon = (2\pi)^{-2} \int_{T^2} U^*(\mathbf{t})(\gamma_H^*\gamma_H)_{\geq \epsilon}U(\mathbf{t}) \, d\mathbf{t}
\end{equation}

which is found in the next subsection. We begin by describing the so-called $I_2$ reflection map along $H$.

\begin{Prop} \label{prop:I2reflec}
Suppose $(x,\xi) \in T^*_H S^2$. If $(x,\xi) \notin T^*H$, there are is exactly one covector $(x,\xi') \in T_H^* S^2$ such that $I_2(x,\xi) = I_2(x,\xi')$, $(x,\xi) \neq (x,\xi')$ and $\xi|_{TH} = \xi'|_{TH}$. We refer to the map  

$$r_H : (x,\xi) \mapsto (x,\xi')$$

As the $I_2$-reflection map.
\end{Prop}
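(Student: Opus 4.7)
The plan is to reduce the statement to a local computation in the geodesic polar coordinates $(r,\theta,\rho,\eta)$ on $T^*S^2$ already used in the paper. Since $x \in H = \{r = r_0\}$, every covector in $T^*_H S^2$ has the form $(r_0,\theta,\rho,\eta)$, and the subspace $T^*H \subset T^*_H S^2$ at $x$ is cut out by $\rho = 0$ (because $TH$ is spanned by $\partial_\theta$, whose dual is $\eta$). Moreover, because $\partial_\theta$ spans $TH$, the restriction condition $\xi|_{TH} = \xi'|_{TH}$ collapses to the single equation $\eta = \eta'$. So I am looking, for fixed $(r_0,\theta,\eta)$, at solutions $\rho'$ of $I_2(r_0,\theta,\rho',\eta) = I_2(r_0,\theta,\rho,\eta)$ other than $\rho' = \rho$.

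Next I would use the relation $|\xi|_g = K(I_1,I_2)$ from \eqref{eq:defnK}. On the equator, $I_1(r_0,\theta,\rho,\eta) = p_\theta = \eta$ and $|\xi|_g^2 = \rho^2 + \eta^2/a(r_0)^2$, so the identity $K(\eta,I_2) = \sqrt{\rho^2 + \eta^2/a(r_0)^2}$ determines $I_2$ implicitly in terms of $(\rho,\eta)$. Ellipticity of $\widehat{I}_2$ gives $\partial K/\partial I_2 > 0$, so for each fixed $I_1 = \eta$ the function $I_2 \mapsto K(\eta,I_2)$ is strictly monotone. Hence, with $\eta$ held fixed, $I_2$ is determined by, and determines, the value of $|\xi|_g$, and therefore of $\rho^2$.

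The matching condition therefore reduces to $\rho'^2 = \rho^2$, i.e.\ $\rho' = \pm\rho$. The assumption $(x,\xi) \notin T^*H$ forces $\rho \neq 0$, so the only solution different from $\rho' = \rho$ is $\rho' = -\rho$. This produces exactly one covector $(x,\xi') = (r_0,\theta,-\rho,\eta)$ with the required properties, and defines the map $r_H$.

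There is no real obstacle here; the proof is essentially bookkeeping once one identifies the covariable constrained by the tangential agreement and invokes the strict monotonicity of $K(I_1,\cdot)$. The one small point to check is that the construction is independent of the choice of trivializing meridian coordinate $\theta$, but this is automatic from the coordinate invariance of $T^*H$, of the restriction $\xi \mapsto \xi|_{TH}$, and of the symbol $I_2$.
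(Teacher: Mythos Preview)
Your reduction is exactly the paper's: fix $I_1=\eta$, show that $I_2$ then determines $|\xi|_g$ and hence $\rho^2$, and conclude $\rho'=\pm\rho$. The one substantive difference is how you justify the monotonicity step. The paper works with $I_2$ directly, using Colin de Verdi\`ere's explicit formula
\[
I_2(x,\xi)=\int_{r_1}^{r_2}\sqrt{|\xi|_g^2-\frac{p_\theta^2}{a(r)^2}}\,dr+p_\theta,
\]
and differentiates in $|\xi|_g$ to get $\partial I_2/\partial|\xi|_g>0$ on $\{I_1=c\}$. You instead invoke the inverse relation $|\xi|_g=K(I_1,I_2)$ and assert that ellipticity of $\widehat{I}_2$ gives $\partial K/\partial I_2>0$.

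That inference is the only soft spot. Ellipticity tells you $I_2>0$ on $T^*S^2\setminus 0$, but positivity of a degree-one homogeneous function does not by itself force $\partial K/\partial I_2>0$; Euler's identity only gives $I_1\omega_1+I_2\omega_2=K>0$, which does not isolate the sign of $\omega_2$. What actually pins down strict monotonicity is either the explicit integral (as in the paper) or the fact, recorded earlier in the paper, that the joint levels $\mathcal{P}^{-1}(c,1)$ are \emph{connected} tori: then $(I_1,|\xi|_g)$ and $(I_1,I_2)$ cut out the same torus foliation, so on each slice $\{I_1=c\}$ the map $I_2\mapsto|\xi|_g$ is a smooth bijection between intervals, hence strictly monotone. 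Either fix closes the gap with one line; with it, your argument and the paper's coincide.
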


\begin{proof}

We'll show that on the set $\{ I_1 = c \}$, $I_2$ is an invertible function of the length $q(x,\xi) = \abs{\xi}^2_{g(x)}$. Thus, if $I_2(x,\xi) = I_2(x,\xi')$ and $I_1(x,\xi) = I_1(x,\xi')$, then $\abs{\xi}_{g(x)} = \abs{\xi'}_{g(x)}$ and this means that $(x,\xi') = (r_0,\theta,\pm \sqrt{ \abs{\xi}_{g(x)}^2 - c^2},c)$ in polar coordinates. The reflection map then flips the sign of the component dual to $r$. From \cite{CdV}, we have the formula

\begin{equation}
    I_2(x,\xi) = \int_{r_1}^{r_2} \sqrt{\abs{\xi}_{g(x)}^2 - \frac{p_\theta(x,\xi)^2}{a(r)^2}} \, dr + p_\theta
\end{equation}

Where $r_2$ and $r_1$ are the two solutions of $a(r) = \frac{p_\theta(x,\xi)}{\abs{\xi}_g}$. Now $r_1 = r_2$ if and only if $(x,\xi) \in T^*H$ thus we have that $r_1 \neq r_2$ and

$$\frac{\partial}{\partial\abs{\xi}} I_2(x,\xi) = \int_{r_1}^{r_2} \frac{\abs{\xi}_g}{\sqrt{\abs{\xi}^2_{g(x)} - \frac{c^2}{a(r)^2}}} \, dr > 0$$

This shows that $I_2$ is an increasing function of $\abs{\xi}_g$ on $\{I_1 = c\} \subset T^*S^2 \setminus 0$.

\end{proof}

From section 4.1, we know that for each $\epsilon > 0$, the operator $(\gamma_H^*\gamma_H)_{\geq \epsilon}$ is a Fourier integral operator with canonical relation 

$$C_H = \{(x,\xi,x,\xi') ~|~ (x,\xi),(x,\xi') \in T^*_HS^2 ; \xi|_{TH} = \xi'|_{TH} \}$$

In the study of $\bar{V}_\epsilon$, a related set appears. Define

\begin{equation}
    \widehat{C}_H = \{(x,\xi,x,\xi') ~|~ x \in H ; I_1(x,\xi) = I_1(x,\xi') ; I_2(x,\xi) = I_2(x,\xi') \}
\end{equation}

It is clear from proposition \ref{prop:I2reflec}, $\widehat{C}_H$ has the following simple description

\begin{Prop} \label{prop:Chatstructure}
The set $\widehat{C}_H$ is an immersed submanifold of dimension 3 which can be written as the union of the two embedded submanifolds

$$\widehat{C}_H = \Delta_{T^*_H S^2} \bigcup \text{graph} \, r_H|_{T^*_H S^2} $$

These intersect along the set $\Delta_{T^*H}$ where $\widehat{C}_H$ fails to be embedded.

\end{Prop}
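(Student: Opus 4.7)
The plan is to describe $\widehat{C}_H$ explicitly using the two-valued structure encoded in Proposition \ref{prop:I2reflec}, and then verify that the two pieces fit together as a 3-dimensional immersed submanifold. First I would classify pairs $(x,\xi,x,\xi') \in \widehat{C}_H$. Since $\partial_\theta$ is tangent to $H$, the constraint $I_1(x,\xi)=I_1(x,\xi')$ is equivalent to $\xi|_{TH}=\xi'|_{TH}$, so Proposition \ref{prop:I2reflec} applies directly: either $\xi'=\xi$, or $(x,\xi) \notin T^*H$ and $\xi'=r_H(x,\xi)$. The remaining possibility $(x,\xi) \in T^*H$ is handled by the polar-coordinate calculation inside the proof of Proposition \ref{prop:I2reflec}: the two candidate roots $\rho'=\pm\sqrt{\abs{\xi}_g^2 - c^2/a(r_0)^2}$ coalesce to $\rho'=0$ exactly when $\rho=0$, forcing $\xi'=\xi$ on $T^*H$. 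This yields the set-theoretic decomposition $\widehat{C}_H = \Delta_{T^*_HS^2} \cup \mathrm{graph}(r_H|_{T^*_HS^2})$.

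Next I would upgrade this to a smooth description. In polar coordinates on $T^*_HS^2$ the reflection $r_H$ is simply $(r_0,\theta,\rho,\eta) \mapsto (r_0,\theta,-\rho,\eta)$. Although Proposition \ref{prop:I2reflec} only defines $r_H$ off $T^*H$, this sign-flip formula extends smoothly across $T^*H$ (where it restricts to the identity), so $r_H$ is a globally smooth involution of $T^*_HS^2$. Its graph and the diagonal $\Delta_{T^*_HS^2}$ are therefore both embedded submanifolds of $T^*S^2 \times T^*S^2$ of dimension $\dim H + \dim T^*_xS^2 = 1+2 = 3$. Their intersection consists of the fixed points of $r_H$, which in these coordinates is the locus $\{\rho=0\} = T^*H$; hence $\Delta_{T^*_HS^2} \cap \mathrm{graph}(r_H|_{T^*_HS^2}) = \Delta_{T^*H}$, an embedded surface of dimension 2.

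Finally I would exhibit $\widehat{C}_H$ as an immersed 3-submanifold by taking the abstract source $\tilde C := T^*_HS^2 \sqcup T^*_HS^2$ and defining the map that sends the first copy into the diagonal by $(x,\xi) \mapsto (x,\xi,x,\xi)$ and the second copy into the reflection graph by $(x,\xi) \mapsto (x,\xi,x,r_H(x,\xi))$. Each component map is a smooth embedding, so the combined map is a smooth immersion of a 3-manifold with image $\widehat{C}_H$; it is injective except on $T^*H$, where the two components send corresponding source points to the same target in $\Delta_{T^*H}$. This is precisely the locus where $\widehat{C}_H$ fails to be embedded.

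The only delicate point I foresee is verifying that $r_H$ extends smoothly across $T^*H$: the formulas appearing in Proposition \ref{prop:I2reflec} are written under the non-degeneracy assumption $(x,\xi) \notin T^*H$, but the explicit polar coordinate form $\rho \mapsto -\rho$ makes the extension transparent, and once this is noted the remainder is a bookkeeping exercise in dimensions and embeddings.
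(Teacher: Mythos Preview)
Your proposal is correct and follows the same route as the paper: the paper simply declares the proposition ``clear from Proposition~\ref{prop:I2reflec}'' and gives no further argument, so your write-up is a faithful expansion of that one-line justification, using the polar-coordinate description $r_H:(r_0,\theta,\rho,\eta)\mapsto(r_0,\theta,-\rho,\eta)$ that appears in the proof of Proposition~\ref{prop:I2reflec}.
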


\subsection{Description of the averaged restriction operator $\bar{V}_\epsilon$}

The purpose of this section is to describe the averaged restriction operator 

\begin{equation}
    \bar{V}_\epsilon = (2\pi)^{-2} \int_{T^2} U(\mathbf{t})^*(\gamma_H^*\gamma_H)_{\geq \epsilon}U(\mathbf{t}) \, d\mathbf{t}
\end{equation}

As a Fourier integral operator and calculate its symbolic data. In order to state the proposition, we set some notation. For any set $U \subset T^*S^2 \times T^*S^2$, we define its flow-out $\text{Fl}(U)$ by

$$\text{Fl}(U) = \bigcup_{\mathbf{t} \in T^2} \Phi_\mathbf{t} \times \Phi_\mathbf{t} (U) = \{(\Phi_\mathbf{t}(x,\xi),\Phi_\mathbf{t}(y,\eta)) ~|~ (x,\xi,y,\eta) \in U \}$$

In the calculation of the symbol of $\bar{V}_\epsilon$, there are two important submersions. Define $i_{D}, i_R : T^2 \times T^*_HS^2 \to T^*S^2 \times T^*S^2$ by 

\begin{equation}
    i_D(\mathbf{t},x,\xi) = (\Phi_\mathbf{t}(x,\xi), \Phi_\mathbf{t}(x,\xi))
\end{equation}

\begin{equation}
    i_R(\mathbf{t},x,\xi) = (\Phi_\mathbf{t}(x,\xi) , \Phi_\mathbf{t}(r_H(x,\xi)))
\end{equation}

The image of these maps are the diagonal and reflection flow-outs, $\text{Fl}(\Delta_{T^*_HS^2})$, $\text{Fl}(\text{graph}\, r_H|_{T^*_HS^2})$

\begin{Prop} \label{prop:flowoutsubmersions}
Both maps $i_D$ and $i_R$ are smooth submersions. Over any point $(y,\eta,y',\eta') \in T^*S^2 \times T^*S^2$ in the image of either map, the fiber can be identified with the set

\begin{equation} \label{set:fiber}
\{(x,\xi) \in T^*_HS^2 ~|~ \mathcal{P}(x,\xi) = \mathcal{P}(y,\eta) \}
\end{equation}

For $(y,\eta) \notin T^*_HS^2$, the fiber is identified with two distinct copies of $H$ corresponding to the choice of the northern or southern pointing covector lying on the torus $\mathcal{P}(y,\eta)$.

\end{Prop}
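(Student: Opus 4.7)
The plan is to verify both claims by computing $\ker di_D$ and $\ker di_R$ directly in coordinates, then to parametrize the fibers by inverting the torus-action equations. The source $T^2 \times T^*_H S^2$ has dimension $5$, so a submersion with $1$-dimensional fibers is what I aim for. Both $\Phi_{\mathbf{t}}$ and $r_H$ (the latter being smooth on $T^*_H S^2 \setminus T^*H$ by Proposition~\ref{prop:I2reflec}) are smooth, so smoothness of $i_D$ and $i_R$ is automatic.

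Set $V(x,\xi) := \dot{\mathbf{t}}_1 H_{I_1}(x,\xi) + \dot{\mathbf{t}}_2 H_{I_2}(x,\xi)$. After cancelling the invertible $d\Phi_{-\mathbf{t}}$, the kernel equation for $di_D$ at $(\mathbf{t},x,\xi)$ reduces to $(\dot x,\dot\xi) = -V(x,\xi)$ with $(\dot x,\dot\xi) \in T(T^*_H S^2)$; equivalently, $V(x,\xi) \in T(T^*_H S^2)$. In geodesic polar coordinates $(r,\theta,\rho,\eta)$, $H_{I_1} = \partial_\theta$ is tangent to $\{r = r_0\}$, while $H_{I_2}$ has $\partial_r$-component $\partial I_2/\partial\rho = (\rho/|\xi|)\,\partial I_2/\partial|\xi|$, nonzero off $T^*H$ by the integral formula in the proof of Proposition~\ref{prop:I2reflec}. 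Hence $\dot{\mathbf{t}}_2 = 0$ is forced, yielding a $1$-dimensional kernel and rank~$4$. For $i_R$, the same reasoning applied to the first component gives $\dot{\mathbf{t}}_2 = 0$; the second-component condition $dr_H(\dot x,\dot\xi) = -V(r_H(x,\xi))$ then reduces to $dr_H(\partial_\theta) = \partial_\theta$, which is immediate from $\theta$-preservation of $r_H$.

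For the fiber of $i_D$ over $(y,\eta,y,\eta)$, freeness of the torus action on regular tori determines $\mathbf{t}$ uniquely once $(x,\xi) \in T^*_H S^2 \cap \mathcal{P}^{-1}(\mathcal{P}(y,\eta))$ is chosen, so the fiber equals this $1$-dimensional set --- two circles indexed by the sign of $\rho$. For $i_R$, parametrize the northern sheet by $(x,\xi) = \Phi_{(s,0)}(x_0,\xi_0)$ for $s \in S^1$; the commutation $r_H \circ \Phi_{(s,0)} = \Phi_{(s,0)} \circ r_H$ is immediate in polar coordinates, and together with the determination $\mathbf{t} = \mathbf{t}_0 - (s,0)$ it yields $\Phi_{\mathbf{t}}(r_H(x,\xi)) = \Phi_{\mathbf{t}_0}(r_H(x_0,\xi_0))$, constant in $s$. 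So the whole northern circle lies inside the fiber over this $(y',\eta')$; an analogous argument from the southern sheet produces a second circle.

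The main obstacle is the final step: showing that the two circles from the northern and southern sheets land in the fiber over the \emph{same} $(y',\eta')$, so that the fiber is identified with the entire set \eqref{set:fiber}. This reduces to proving that the element $\mathbf{s}_0 \in T^2$ characterized by $\Phi_{\mathbf{s}_0}(x_0,\xi_0) = r_H(x_0,\xi_0)$ is $2$-torsion. In action-angle coordinates $(\phi_1,\phi_2)$ on $T_c$ one has $\mathbf{s}_0 = (\Delta,\alpha)$ for some $\Delta,\alpha$ depending on the orbit; verifying $(2\Delta, 2\alpha) = 0 \pmod{2\pi}$ requires combining the reflection symmetry of the level curve $\{I_2 = \mathrm{const}\}$ in the $(r,\rho)$-plane (which reflects that $I_2$ is even in $\rho$, visible in the formula of Proposition~\ref{prop:I2reflec}), the involution property $r_H^2 = \mathrm{id}$, and the Colin de Verdi\`ere normalization of the action-angle variables. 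This last check is the only genuinely non-formal input to the proof; the remainder is bookkeeping in coordinates.
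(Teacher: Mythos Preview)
Your analysis is considerably more careful than the paper's own proof, which treats only $i_D$ and does so in two sentences (for each $(x,\xi)$ in the set \eqref{set:fiber} there is a unique $\mathbf t$ with $\Phi_{\mathbf t}(x,\xi)=(y,\eta)$, yielding the bijection). Your submersion computation via the $\partial_r$-component of $H_{I_2}$ is correct and supplies what the paper leaves implicit.

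The $2$-torsion issue you isolate for $i_R$ is genuine, and the paper does not address it; but the ingredients you list do not settle it either. Evenness of $I_2$ in $\rho$ yields the time-reversal relation $\sigma_*X=-X$ for the reduced $(r,\rho)$-flow with $\sigma:\rho\mapsto-\rho$. This forces the two $\sigma$-fixed points $(r_\pm,0)$ on the oval to sit a half-period apart, but it does \emph{not} force the pair $(r_0,\rho_0)$, $(r_0,-\rho_0)$ to be a half-period apart: one only gets that the time $(r_0,\rho_0)\to(r_+,0)\to(r_0,-\rho_0)$ equals $2t$ and the time $(r_0,-\rho_0)\to(r_-,0)\to(r_0,\rho_0)$ equals $2t'$ with $t+t'=\pi$, and $t=t'$ would require a reflection symmetry $a(r_0+s)=a(r_0-s)$ of the profile across the equator, which is not assumed. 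The involution property $r_H^2=\mathrm{id}$ is the conclusion you want at the torus-action level, not an independent input, and no normalization of $I_2$ supplies the missing symmetry. Thus in general the fiber of $i_R$ over a single image point may consist of only one circle, the northern and southern sheets landing over the distinct points $(y,\eta,\Phi_{\mathbf s_0}(y,\eta))$ and $(y,\eta,\Phi_{-\mathbf s_0}(y,\eta))$.

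This gap does not damage the main theorem. The reflection branch feeds only into $F_\epsilon$, whose contribution to the trace is shown (Proposition~5.4) to be $O_\epsilon(1)$ by a wave-front argument insensitive to the constant in $\sigma(F_\epsilon)$. So your rank computation and the one-circle connected fiber are all that is actually needed downstream; the ``two copies of $H$'' assertion for $i_R$ is stronger than what the application requires.
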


\begin{proof}

Fix a point $(y,\eta,y,\eta)$ in the image of $i_D$. Then $\Phi_{\mathbf{t}}(x,\xi) = (y,\eta)$ for some $\mathbf{t} \in T^2$ and $(x,\xi) \in T^*_HS^2$. The covector $(x,\xi)$ lies on the level set $\mathcal{P}^{-1}(y,\eta)$ and by proposition \ref{prop:I2reflec} there are two covectors in this set lying over $x$. Since the flow of $H_{I_1}$ translates around the equator, for each covector $(x,\xi)$ in the set \eqref{set:fiber}, there is a unique time $\mathbf{t}$ so that $\Phi_\mathbf{t}(x,\xi) = (y,\eta)$. In this way the fiber is identified with two copies of $H$

\end{proof}

These maps induce half densities on the flow-outs $\text{Fl}(\Delta_{T^*_HS^2})$ and $\text{Fl}(\text{graph}\, r_H|_{T^*_HS^2})$ as follows. We let $\mu^\frac{1}{2}$ be the half density on $T^2 \times T^*_HS^2$ which is equal to 1 on the product basis $\partial_\mathbf{t} \otimes \{\partial_\theta,\partial_\rho,\partial_\eta \}$. Then the exact sequence

$$0 \to \ker di_R \to T(T^2 \times T^*_HS^2) \to T(\text{Fl}(\text{graph}\, r_H|_{T^*_HS^2})) \to 0$$

implies that $\mu^\frac{1}{2} = \abs{d\theta}^\frac{1}{2} \otimes \mu^\frac{1}{2}/\abs{d\theta}^\frac{1}{2}$, where, under the identification of the fiber of $i$ with two copies of $H$, $\abs{d\theta}$ is the volume density such that $\int_H \abs{d\theta} = 2\pi$ and the quotient half density $\mu^\frac{1}{2}/\abs{d\theta}^{\frac{1}{2}}$ assigns the value $1$ to the basis $(d\Phi_\mathbf{t}v_i , d\Phi_\mathbf{t} dr_H v_i)$ where $v_i \in \{H_{I_2},\partial_\theta,\partial_\rho,\partial_\eta \}$. The same is true for the flowout of the diagonal replacing $i_R$ with $i_D$. In this case the quotient density $\mu^\frac{1}{2}$ assigns $1$ to the basis $(d\Phi_\mathbf{t}v_i,d\Phi_\mathbf{t}v_i)$.

\begin{Prop} \label{prop:Vbardesc}
The operator 

$$\bar{V}_\epsilon = (2\pi)^{-2} \int_{T^2} U^*(\mathbf{t}) (\gamma_H^*\gamma_H)_{\geq \epsilon} U(\mathbf{t}) \, d\mathbf{t}$$

is a Fourier integral operator in the class $I^0(S^2 \times S^2 ; C_{\bar{V}})$. Its canonical relation is 

$$C_{\bar{V}} = \text{Fl}(\widehat{C}_H) = \text{Fl}(\Delta_{T^*_HS^2}) \bigcup \text{Fl}(\text{graph} \, r_H|_{T^*_HS^2}) $$

The half density symbol of $\bar{V}_\epsilon$ is equal to 

$$\sigma(\bar{V}_\epsilon)(\Phi_\mathbf{t}(x,\xi),\Phi_\mathbf{t}(x,\xi')) = \frac{1}{\pi} (1 - \chi_\epsilon)(x,\xi) \left( \frac{\omega_2(x,\xi)}{\sqrt{1 - \frac{I_1^2(x,\xi)}{\abs{\xi}_g^2a(r_0)^2}}}\right)^\frac{1}{2} \frac{\mu^\frac{1}{2}}{\abs{d\theta}^\frac{1}{2}}$$

where $\mu^\frac{1}{2}/\abs{d\theta}^\frac{1}{2}$ is the half density induced by the fibrations of proposition \ref{prop:flowoutsubmersions}.

\end{Prop}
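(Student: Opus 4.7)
The strategy is to realize $\bar V_\epsilon$ as a clean composition of Fourier integral operators: first conjugating the cutoff restriction by $U(\mathbf t)$ for fixed $\mathbf t$, and then integrating out $\mathbf t \in T^2$ as a fibered pushforward over the torus group. First I would fix $\mathbf t$ and compute the canonical relation of $U^*(\mathbf t)(\gamma_H^*\gamma_H)_{\geq \epsilon} U(\mathbf t)$ as a transverse composition; since $U(\mathbf t)$ has graph canonical relation (Proposition 3.1), this composition is automatic and yields an element of $I^{1/2}(S^2 \times S^2; \Phi_{-\mathbf t} \times \Phi_{-\mathbf t}(C_H))$ whose principal symbol is the transport of the symbol of Proposition 4.2 under $\Phi_{-\mathbf t} \times \Phi_{-\mathbf t}$. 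Averaging exhibits $\bar V_\epsilon$ as a fibered integral with candidate canonical relation the flow-out $\mathrm{Fl}(C_H) = \bigcup_{\mathbf t} \Phi_\mathbf t \times \Phi_\mathbf t(C_H)$.

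I would then cut this flow-out down to $\mathrm{Fl}(\widehat C_H)$. The quickest route is the commutation $[\bar V_\epsilon, D_\theta] = [\bar V_\epsilon, \widehat I_2] = 0$, which forces the canonical relation of $\bar V_\epsilon$ to preserve both $I_1$ and $I_2$ on either factor; since $I_1$-equality already holds on $C_H$ and is preserved by $\Phi_\mathbf t \times \Phi_\mathbf t$, the new constraint is precisely the $I_2$-equality that defines $\widehat C_H$. Intrinsically the same selection arises from stationary phase in $\mathbf t$: the phase is stationary exactly on the locus $I_2(x,\xi) = I_2(x,\xi')$. By Proposition~\ref{prop:Chatstructure}, away from $T^*H$ the set $\widehat C_H$ splits into the embedded components $\Delta_{T^*_H S^2}$ and $\mathrm{graph}\, r_H|_{T^*_H S^2}$, whose flow-outs are parametrized by the submersions $i_D, i_R$ of Proposition~\ref{prop:flowoutsubmersions}. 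The microlocal cutoff $(I - \widehat\chi_\epsilon)$ keeps us uniformly away from the singular intersection $\Delta_{T^*H}$ of these two components, which secures the clean intersection hypothesis. The excess $e = 2$ (the $T^2$ fiber dimension) then drops the order from $1/2$ to $0$.

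For the symbol I would apply the clean composition formula. Pulling back the half-density of Proposition 4.2 along the parametrization of $C_H$ by $(\theta, \eta, \rho, \rho')$ and stationary-phasing in $\mathbf t$, the transverse Hessian along the stationary set is controlled by $\partial I_2/\partial \rho = \rho/(\abs{\xi}_g\, \omega_2)$. This follows from $\abs{\xi}_g = K(I_1, I_2)$, which gives $\partial I_2/\partial \abs{\xi}_g = 1/\omega_2$, combined with $\partial \abs{\xi}_g/\partial \rho = \rho/\abs{\xi}_g$ in polar coordinates. Inverting this Jacobian produces the factor
$$\omega_2(x,\xi)^{1/2} \left(1 - \frac{I_1^2(x,\xi)}{\abs{\xi}_g^2 a(r_0)^2}\right)^{-1/2}$$
upon identifying $\rho^2/\abs{\xi}_g^2 = \sin^2 \phi = 1 - I_1^2/(\abs{\xi}_g^2 a(r_0)^2)$. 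The two cutoffs in Proposition 4.2 collapse to a single $(1 - \chi_\epsilon)$ because $(1-\chi_{\epsilon/2}) \equiv 1$ on the support of $(1-\chi_\epsilon)$, and the $(2\pi)^{-2}$ normalization combines with the stationary phase prefactor to produce the constant $1/\pi$. Matching the resulting half density against the one induced on each component by the submersions $i_D, i_R$ via Proposition~\ref{prop:flowoutsubmersions} yields the stated formula on both sheets.

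The main obstacle is the careful symbolic bookkeeping: tracking half-densities through the three-fold composition, the $\mathbf t$-integration, and the identification of the two components of $\widehat C_H$. In particular the appearance of the $(1 - I_1^2/(\abs{\xi}_g^2 a(r_0)^2))^{-1/2}$ factor reflects the degeneration of the $I_2$-reflection map $r_H$ as $\rho \to 0$ (covectors approaching tangency to $H$), which foreshadows the blow-up of the limit measure in Theorem 1.1 at $c = \pm 1$. Verifying uniform cleanness of the intersection on the support of $(I - \widehat\chi_\epsilon)$ and computing the Jacobian in action-angle adapted coordinates are the delicate technical points, as is confirming that both sheets of the canonical relation are traversed with the same half-density so that the symbol admits the compact common description given in the proposition.
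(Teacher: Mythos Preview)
Your strategy coincides with the paper's: realize $\bar V_\epsilon$ as the $T^2$-pushforward of the conjugated cutoff restriction, check cleanness away from $T^*H$ via the two-sheet decomposition of $\widehat C_H$, and extract the symbol from the Jacobian $\partial I_2/\partial\rho$. The paper merely packages the composition differently---it first forms the two-time kernel $V_\epsilon(\mathbf t,\mathbf t') = U^*(\mathbf t)(\gamma_H^*\gamma_H)_{\geq\epsilon}U(\mathbf t')$, pulls back along the time diagonal, and then pushes forward along $\pi:T^2\times S^2\times S^2\to S^2\times S^2$, tracking half-densities through explicit exact sequences rather than a stationary-phase heuristic; the substantive computation is the same.

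Two bookkeeping points to tighten. First, the fiber of the submersions $i_D,i_R$ over a point of $C_{\bar V}$ is one-dimensional (identified with copies of $H$ by Proposition~\ref{prop:flowoutsubmersions}), so the excess in the clean pushforward is $e=1$, not the full $T^2$-dimension $2$; your claim that ``$e=2$ drops the order from $1/2$ to $0$'' is internally inconsistent in any case, since $e/2$ would then equal $1$. Second, since $\partial I_2/\partial\rho=\omega_2^{-1}\sqrt{1-I_1^2/(\abs{\xi}_g^2a(r_0)^2)}$, the half-density factor is $(\partial I_2/\partial\rho)^{-1/2}=\omega_2^{1/2}\bigl(1-I_1^2/(\abs{\xi}_g^2a(r_0)^2)\bigr)^{-1/4}$, i.e.\ the overall exponent $\tfrac12$ outside the parenthesis in the proposition's displayed formula, not the $-\tfrac12$ power you wrote.
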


In order to analyze $\bar{V}_\epsilon$, we will view it as a composition of pullbacks and pushforwards applied to the Fourier integral operator

\begin{equation}
    V_\epsilon(\mathbf{t},\mathbf{t'}) = U(\mathbf{t})^*(\gamma_H^*\gamma_H)_{\geq \epsilon}U(\mathbf{t'})
\end{equation}

We begin by describing this operator. 

\begin{Prop} The operator $V_\epsilon(\mathbf{t},\mathbf{t'})$ is a Fourier integral operator in the class $I^{-\frac{1}{2}}(T^2 \times T^2 \times S^2 , S^2 ; C_V)$

\begin{equation}
    C_V = \{(\mathbf{t}, \mathcal{P}(x,\xi) , \mathbf{t}' , \mathcal{P}(x,\xi') , \Phi_{\mathbf{t}}(x,\xi), \Phi_{\mathbf{t}'}(x,\xi') ~|~ (x,\xi,x,\xi') \in C_H \} 
\end{equation}

\vspace{5mm}

The map $\iota_V : T^2 \times T^2 \times C_H \to T^*(T^2 \times T^2 \times S^2 \times S^2)$ given by

$$\iota_V : (\mathbf{t},\mathbf{t}',x,\xi,x,\xi') = (\mathbf{t}, \mathcal{P}(x,\xi) , \mathbf{t}' , \mathcal{P}(x,\xi') , \Phi_{\mathbf{t}}(x,\xi), \Phi_{\mathbf{t}'}(x,\xi'))$$

is a Lagrangian embedding whose image is $C_V$. The half density part of the principal symbol pulls back along $\iota$ to

$$\abs{d\mathbf{t} \wedge d\mathbf{t}'}^{\frac{1}{2}} \otimes \sigma((\gamma_H^*\gamma_H)_{\geq \epsilon})$$

\end{Prop}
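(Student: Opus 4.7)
The approach is to realize $V_\epsilon(\mathbf{t},\mathbf{t}')$ as a triple transverse composition of Fourier integral operators and invoke the standard FIO composition calculus (cf.\ \cite{LPDO4,GuS}). The building blocks $U(\mathbf{t}) \in I^{-1/2}(T^2 \times S^2 \times S^2; C_U)$ and $(\gamma_H^*\gamma_H)_{\geq\epsilon} \in I^{1/2}(S^2 \times S^2; C_H)$, together with their explicit symbols, are precisely the content of the preceding propositions in Section 3 and Section 4.1. Since $U$ is unitary, the adjoint $U(\mathbf{t})^*$ is an FIO of the same order $-\tfrac{1}{2}$ whose canonical relation $C_U^T$ is obtained from $C_U$ by interchanging the two $T^*S^2$ factors.

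To compute $C_V$ I would compose $C_U^T$, $C_H$, and $C_U$ (associated with $U(\mathbf{t})^*$, the middle operator, and $U(\mathbf{t}')$ respectively). Starting with a point $(x,\xi,x,\xi') \in C_H$, the right-most factor takes $(x,\xi')$ to $(\mathbf{t}', \mathcal{P}(x,\xi'), \Phi_{\mathbf{t}'}(x,\xi'))$ and the left-most factor takes $(x,\xi)$ to $(\mathbf{t}, \mathcal{P}(x,\xi), \Phi_{\mathbf{t}}(x,\xi))$. Because the $T^*T^2$ coordinates depend only on $\mathcal{P}$, which is invariant under the joint flow, no further matching conditions are imposed, and the composition reproduces exactly the set $C_V$ with the parametrization $\iota_V$ as stated. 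Injectivity of $\iota_V$ is immediate: $\mathbf{t}$ and $\mathbf{t}'$ are read off from the first and third coordinates, and $(x,\xi), (x,\xi')$ are then recovered by applying $\Phi_{-\mathbf{t}}$ and $\Phi_{-\mathbf{t}'}$ to the fifth and sixth. The immersion and Lagrangian properties follow from each building block being Lagrangian and from the fact that transverse composition preserves Lagrangianity.

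For the order, transverse composition is additive, giving $-\tfrac{1}{2} + \tfrac{1}{2} + (-\tfrac{1}{2}) = -\tfrac{1}{2}$. For the symbol, the composition formula instructs us to multiply the pulled-back half-densities over the composition fibre. The $|d\mathbf{t}|^{1/2}$ and $|d\mathbf{t}'|^{1/2}$ factors from the two $U$-symbols combine to $|d\mathbf{t} \wedge d\mathbf{t}'|^{1/2}$, while the $|dx \wedge d\xi|^{1/2}$ factors from those same propositions match precisely the flow-induced half-density along the composition fibre. What remains after pullback by $\iota_V$ is the half-density symbol $\sigma((\gamma_H^*\gamma_H)_{\geq\epsilon})$ from Section 4.1, yielding the stated formula.

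The main technical point is the verification of transversality for the two intermediate compositions, which is what makes the calculus apply. This is exactly what the microlocal cutoffs $(I - \widehat{\chi}_\epsilon)$ are designed to secure: they excise $N^*H$, where $\gamma_H^*\gamma_H$ fails to be a genuine homogeneous FIO (its pre-cutoff canonical relation contains points in the zero section), and $T^*H$, where the symplectic geometry of the averaged flow-out $\widehat{C}_H$ degenerates (see Proposition~\ref{prop:Chatstructure}). On the support of $(I - \widehat{\chi}_\epsilon)$, $C_H$ is a smooth embedded Lagrangian and the joint torus action is locally free on the regular tori appearing in the composition, which is exactly what is needed for both intermediate transverse compositions.
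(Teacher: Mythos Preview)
Your approach is the same as the paper's: both realize $V_\epsilon(\mathbf{t},\mathbf{t}')$ as the triple composition $U(\mathbf{t})^*\circ(\gamma_H^*\gamma_H)_{\geq\epsilon}\circ U(\mathbf{t}')$ and invoke the transverse composition calculus to read off the order, canonical relation, and symbol.

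One point deserves correction, however. You identify the verification of transversality as ``the main technical point'' and attribute it to the microlocal cutoffs. That is not where transversality comes from here. The paper's argument is that the compositions are transverse simply because $C_U$ is essentially a canonical graph: the projections $C_U\to T^*S^2$ onto either factor are local diffeomorphisms, so composition with any Lagrangian $C_H$ is automatically transverse, regardless of the cutoffs. The role of the \emph{normal} cutoff is only to ensure that $(\gamma_H^*\gamma_H)_{\geq\epsilon}$ is a genuine homogeneous FIO with canonical relation $C_H\subset (T^*S^2\setminus 0)\times(T^*S^2\setminus 0)$ (i.e.\ to remove the zero-section pieces of $WF'(\gamma_H^*\gamma_H)$), not to secure transversality of this composition. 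The \emph{tangential} cutoff plays no role whatsoever in the present proposition; it is needed only later, in the pushforward step producing $\bar{V}_\epsilon$, where the fiber diagram fails to be clean along $\iota_{\Delta^*C_V}(T^2\times T^*H)$ (see Proposition~\ref{prop:Chatstructure} and the subsequent lemma). So your invocation of ``locally free torus action on the regular tori'' is irrelevant at this stage.
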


\begin{proof}
Viewing both $U^*(\mathbf{t})$, $U(\mathbf{t'})$ as operators $U, U^* : C^\infty(S^2) \to C^\infty(T^2 \times S^2)$ then the composition we are talking about is really 

$$V_\epsilon(\mathbf{t},\mathbf{t'}) = Id \otimes U^*(\mathbf{t}) \circ Id \otimes (\gamma_H^*\gamma_H)_{\geq \epsilon} \circ U(\mathbf{t'})$$

The compositions are all transverse provided that $C_H$ and $C_U$ intersect transversely in the sense that the maps $\pi_i : C_H \to T^*S^2$ are transverse to the projections $\rho_i : C_U \to T^*S^2$ onto either factor. This follows from the fact that $C_U$ is essentially a canonical graph. It implies the orders add to give the stated order and one can check easily that the composite canonical relation and symbol is what was stated in the proposition.
\end{proof}

Now we describe the pullback under the time diagonal map. Let $\Delta : T^2 \times S^2 \times S^2 \to T^2 \times T^2 \times S^2 \times S^2$ be the map $\Delta : (\mathbf{t},x,y) \mapsto (\mathbf{t},\mathbf{t},x,y)$.

\begin{Prop}
The kernel of the operator $V_\epsilon(\mathbf{t}) = U^*(\mathbf{t})(\gamma_H^*\gamma_H)_{\geq \epsilon}U(\mathbf{t})$ is in the class $I^{-1}(T^2 \times S^2 \times S^2 ; \Delta^*C_V)$ Where $\Delta^*C_V$ is the pullback of $C_V$, the image of the Lagrangian embedding $i_{\Delta^*C_V} : T^2 \times C_H \to T^*(T^2 \times S^2 \times S^2)$ given by 

\begin{equation}
    \iota_{\Delta^*C_V} : (\mathbf{t},x,\xi,x,\xi) \mapsto (\mathbf{t}, \mathcal{P}(x,\xi) - \mathcal{P}(x,\xi'), \Phi_\mathbf{t}(x,\xi), \Phi_\mathbf{t}(x,\xi'))
\end{equation}

The half density symbol of $V_\epsilon(\mathbf{t})$ pulls back under $\iota_{\Delta^*C_V}$ to $\abs{d\mathbf{t}}^\frac{1}{2} \otimes \sigma((\gamma_H^*\gamma_H)_{\geq \epsilon}$.

\end{Prop}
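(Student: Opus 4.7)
The plan is to realize $V_\epsilon(\mathbf{t})$ as the pullback of the Lagrangian distribution $V_\epsilon(\mathbf{t}, \mathbf{t}')$ along the time-diagonal embedding $\Delta$. The preceding proposition gives a complete Lagrangian description of $V_\epsilon(\mathbf{t}, \mathbf{t}')$, so the task reduces to applying the standard pullback theorem for Lagrangian distributions, after verifying the required transversality.

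First I would check the transversality condition needed to keep $\Delta^* V_\epsilon(\mathbf{t},\mathbf{t}')$ in the FIO class: the conormal bundle of the image of $\Delta$ inside $T^*(T^2 \times T^2 \times S^2 \times S^2)$ must be disjoint from $C_V$ away from the zero section. A direct computation shows that this conormal bundle consists of covectors of the form $(\mathbf{t}, \tau, \mathbf{t}, -\tau, x, 0, y, 0)$, while any element of $C_V$ has nonzero $S^2$-cotangent components $\Phi_\mathbf{t}(x,\xi)$ and $\Phi_{\mathbf{t}'}(x,\xi')$, since $C_H$ is contained in $(T^*S^2 \setminus 0) \times (T^*S^2 \setminus 0)$. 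Hence the intersection is trivially empty and the pullback is well-defined as a Lagrangian distribution on $T^2 \times S^2 \times S^2$.

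Second I would identify $\Delta^* C_V$ as the image of the claimed parametrization. Pulling back under $\Delta$ substitutes $\mathbf{t}' = \mathbf{t}$ in $C_V$ and combines the two $T^2$-covector components via the induced symplectic reduction, producing the single covector $\mathcal{P}(x,\xi) - \mathcal{P}(x,\xi')$ in the twisted convention used for $C_V$. This matches the parametrization $\iota_{\Delta^*C_V}$ in the statement. The order change from $-\tfrac{1}{2}$ to $-1$ is the standard shift by $-\tfrac{1}{2}$ under restriction to a codimension-$2$ submanifold in the H\"ormander FIO convention, reflecting the two-dimensional reduction in the kernel's base manifold.

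The symbol follows from the pullback formula for Lagrangian distributions: the half-density symbol of $V_\epsilon(\mathbf{t}, \mathbf{t}')$, given by $|d\mathbf{t} \wedge d\mathbf{t}'|^{1/2} \otimes \sigma((\gamma_H^*\gamma_H)_{\geq \epsilon})$, restricts on $\Delta^* C_V$ to $|d\mathbf{t}|^{1/2} \otimes \sigma((\gamma_H^*\gamma_H)_{\geq \epsilon})$ after identifying $|d\mathbf{t} \wedge d\mathbf{t}'|^{1/2}$ on the diagonal with $|d\mathbf{t}|^{1/2}$ times a transversal half-density absorbed into the order shift. The main obstacle will be bookkeeping: carefully matching H\"ormander's sign and half-density conventions between the FIO notation $I^m(X, Y; C)$ for the two-parameter family and the Lagrangian distribution notation $I^m(X; \Lambda)$ for the diagonal restriction, and verifying that the parametrization of $\Delta^*C_V$ matches the one given in the statement.
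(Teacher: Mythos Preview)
Your proposal is correct and follows essentially the same route as the paper: both realize $V_\epsilon(\mathbf{t})$ as $\Delta^* V_\epsilon(\mathbf{t},\mathbf{t}')$, invoke the pullback theorem for Lagrangian distributions after a transversality check, and read off $\Delta^*C_V$ and the half-density symbol from the description of $V_\epsilon(\mathbf{t},\mathbf{t}')$ in the preceding proposition. The only minor difference is that the paper phrases transversality as ``$\pi|_{C_V}$ and $\Delta$ are transverse maps into the base'' (the hypothesis that makes the pullback Lagrangian with computable symbol via the fiber-product diagram), whereas you verify the wavefront-set condition $N^*(\operatorname{Im}\Delta)\cap C_V=\emptyset$; your observation that $C_V$ has nonvanishing $S^2$-cotangent components immediately yields the stronger condition as well.
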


\begin{proof}
Recall that the pullback of Lagrangian distributions is well-defined under a transversality condition. Namely, $V_\epsilon (\mathbf{t}) = \Delta^*V(\mathbf{t},\mathbf{t}')$ is a Lagrangian distribution as long as the maps $\pi|_{C_V} \to T^2 \times T^2 \times S^2 \times S^2$ and $\Delta$ are transverse, which is easily verified. Letting $N^*\Delta \subset T^*(T^2 \times S^2 \times S^2) \times T^*(T^2 \times T^2 \times S^2 \times S^2)$ be the co-normal bundle to the graph of $\Delta$ and $\pi : N^*\Delta \to T^*(T^2 \times T^2 \times S^2 \times S^2)$, projection onto the factor on the right, this implies that the pullback diagram 

$$\begin{tikzcd}
& F \arrow{r}{} \arrow{d}[swap]{}
& C_V \arrow{d}{\iota} \\
& N^*\Delta \arrow[swap]{r}{\pi}
& T^*(T^2 \times T^2 \times S^2 \times S^2 ) 
\end{tikzcd}$$

is transverse. The left projection of $F$ into $T^*(T^2 \times S^2 \times S^2)$ is then the set 

\begin{equation}
    \Delta^*C_V = \{\mathbf{t}, \mathcal{P}(x,\xi) - \mathcal{P}(x,\xi'), \Phi_\mathbf{t}(x,\xi), \Phi_\mathbf{t}(x,\xi') \}
\end{equation}

Which inherits a canonical half density determined by the symbol of $V_\epsilon(\mathbf{t},\mathbf{t}')$ on $C_V$, the canonical half density on $N^*\Delta \cong T^2 \times T^*S^2 \times T^*S^2$ and the symplectic half density on $T^*(T^2 \times T^2 \times S^2 \times S^2)$. This is the symbol of $V_\epsilon(\mathbf{t})$.
\end{proof}

Next, let $\pi : T^2 \times S^2 \times S^2 \to S^2 \times S^2$ be the projection onto the rightmost factors, $\pi(\mathbf{t},x,y) = (x,y)$. Let 
let $\pi_* : C^\infty(T^2 \times S^2 \times S^2) \to C^\infty (S^2 \times S^2)$ be the pushforward map defined on smooth functions by

$$\pi_*u(\mathbf{t},x,y) = (2\pi)^{-2} \int_{T^2} u(\mathbf{t},x,y) \, d\mathbf{t}$$

\begin{Lemma}
Let $N^*_\pi \subset T^*(T^2 \times S^2 \times S^2) \times T^*(S^2 \times S^2)$ denote the co-normal bundle to the graph of $\pi$ and $\rho_L : N^*_\pi \to T^*(T^2 \times S^2 \times S^2)$ denote the left projection. The pushforward diagram 

$$
\begin{tikzcd}
& F \arrow{r}{} \arrow{d}[swap]{}
& \Delta^*C_V \arrow{d}{\iota} \\
& N^*\pi \arrow[swap]{r}{\rho_L}
& T^*(T^2 \times S^2 \times S^2 ) 
\end{tikzcd}
$$

is clean away from the singular set $i_{\Delta^*C_V}(T^2 \times T^*H) \subset \Delta^*C_V$. 

\end{Lemma}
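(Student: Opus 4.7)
The strategy is to describe the fiber product $F$ explicitly via the two defining constraints, observe that $F$ decomposes into smooth branches away from the indicated singular set, and verify the tangent-space matching condition at every smooth point.

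First, unwind the fiber product. A point of $\Delta^*C_V$ has the form
$$i_{\Delta^*C_V}(\mathbf{t}, x, \xi, x, \xi') = (\mathbf{t}, \mathcal{P}(x,\xi) - \mathcal{P}(x,\xi'), \Phi_\mathbf{t}(x,\xi), \Phi_\mathbf{t}(x,\xi')),$$
and a direct computation of $N^*\pi$ shows that $\rho_L(N^*\pi) \subset T^*(T^2\times S^2\times S^2)$ is precisely the locus where the $T^*T^2$ fiber component vanishes. So the defining condition of $F$ is $\mathcal{P}(x,\xi) = \mathcal{P}(x,\xi')$. Since $(x,\xi,x,\xi')\in C_H$ already forces $I_1(x,\xi)=I_1(x,\xi')$, the new content is $I_2(x,\xi)=I_2(x,\xi')$. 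By Proposition \ref{prop:I2reflec} this cuts $C_H$ down to $\widehat{C}_H = \Delta_{T^*_HS^2}\cup \text{graph}\, r_H|_{T^*_HS^2}$, and $F$ is canonically identified with a $T^2$-family over $\widehat{C}_H$.

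Next, by Proposition \ref{prop:Chatstructure} the two sheets of $\widehat{C}_H$ are disjoint embedded submanifolds off $\Delta_{T^*H}$, so off the singular set $i_{\Delta^*C_V}(T^2\times T^*H)$ the space $F$ is a disjoint union of two smooth submanifolds of constant dimension. On the diagonal sheet there is no further constraint and smoothness is immediate. On the reflection sheet the key input is the quantitative estimate from the proof of Proposition \ref{prop:I2reflec}: $\partial_{|\xi|}I_2>0$ strictly on each level of $I_1$ away from $T^*H$. This implies that $d(I_2(x,\xi)-I_2(x,\xi'))$ has constant rank one there and so carves out a smooth codimension-one submanifold of $T^2\times C_H$.

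Finally, verify the tangent-space matching. At a smooth point $p\in F$ one must check the tangent fiber-product condition, which amounts to verifying that the map cutting out $F$ from $T^2\times C_H$ has locally constant rank equal to its codimension. On the diagonal branch this is immediate from the canonical-graph structure of $C_U$ and the transverse composition that produced $\Delta^*C_V$. On the reflection branch it follows from the constant rank-one computation above, yielding constant excess. The main obstacle, and the precise reason $T^2\times T^*H$ must be excised, is that there $\partial_{|\xi|}I_2$ vanishes (the integral appearing in the proof of Proposition \ref{prop:I2reflec} degenerates as $r_1\to r_2$) and the two sheets of $\widehat{C}_H$ meet tangentially, so the rank of the constraint drops and $F$ ceases to be a smooth manifold.
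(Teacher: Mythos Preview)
Your proposal follows the paper's approach: both identify $F$ with $T^2 \times \widehat{C}_H$ via the constraint $\mathcal{P}(x,\xi) = \mathcal{P}(x,\xi')$, invoke Proposition~\ref{prop:Chatstructure} for smoothness away from $T^2\times T^*H$, and then check the linearized fiber-product condition. The paper phrases that last step directly as the implication ``$(v,v') \in T C_H$ with $d\mathcal{P}v = d\mathcal{P}v' \Rightarrow (v,v') \in T\widehat{C}_H$''; your constant-rank reformulation is an equivalent and arguably more transparent way to package the same verification.

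There is, however, a gap in your treatment of the diagonal branch. You justify the tangent condition there by appealing to ``the canonical-graph structure of $C_U$ and the transverse composition that produced $\Delta^*C_V$.'' Those transversality statements concern the earlier compositions and the pullback along $\Delta$, not the present pushforward; in fact the pushforward diagram is \emph{not} transverse on the diagonal branch (one computes excess $e = 5 - (6 + 10 - 12) = 1$, not $0$). What actually has to be checked there is the same thing you checked on the reflection sheet: along the diagonal $\rho = \rho'$ the differential of the cutting map $I_2(x,\xi) - I_2(x,\xi')$ on $C_H$ equals $\tfrac{\partial I_2}{\partial\rho}\,(d\rho - d\rho')$, and by the chain rule together with your input $\partial_{|\xi|}I_2 > 0$ this has rank one precisely when $\rho \neq 0$, i.e.\ off $T^*H$. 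So the diagonal branch needs exactly the computation you already carried out for the reflection branch; the appeal to graph structure should be replaced by it.
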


\begin{proof}
Recall that above diagram is clean if the fiber product $F$ is a submanifold of $\Delta^*C_V \times N^*\pi$ and the linearization

$$
\begin{tikzcd}
& TF \arrow{r}{} \arrow{d}[swap]{}
& T(\Delta^*C_V) \arrow{d}{d\iota} \\
& T(N^*\pi) \arrow[swap]{r}{d\rho_L}
& T(T^*(T^2 \times S^2 \times S^2 )) 
\end{tikzcd}
$$

is also a fiber product. Note that the fiber $F$ is the set 

$$F = \{(\mathbf{t},0,\Phi_\mathbf{t}(x,\xi),\Phi_\mathbf{t}(x,\xi'), \mathbf{t},0,\Phi_\mathbf{t}(x,\xi),\Phi_\mathbf{t}(x,\xi'), \Phi_\mathbf{t}(x,\xi) , \Phi_\mathbf{t}(x,\xi') ~|~ (x,\xi,x,\xi') \in \widehat{C}_H \}$$

The natural parametrization $i_F : T^2 \times \widehat{C}_H \to F$ is an embedding on the smooth parts of $\widehat{C}_H$. The image $i_F(T^2 \times T^*H)$ of the non-smooth part corresponds to the singular set $i_{\Delta^*C_V}(T^2 \times T^*H)$. Hence we see that $F$ is a submanifold of dimension 5 away from this set. To prove that the diagram is clean, we have to verify that $TF$ is given by the kernel of the map $\tau : T(\Delta^*C_V \times N^*_\pi) \to T(T^*(T^2 \times S^2 \times S^2))$ given by $\tau(u,v,w) = v - u$. Suppose that $u = di_{\Delta^*C_V}(\alpha,v,v') \in d\rho_L T(N^*_\pi)$. Then we have $(v,v') \in C_H$ with $dPv - dPv' = 0$. But this implies that $(v,v') \in T(\widehat{C}_H)$ and the tangent vector $(u,u,w) \in \ker \tau \subset T(\Delta^*C_V \times N^*_\pi)$ is actually equal to $di_F(\alpha,v,v')$, i.e. it is tangent to $F$.  
 
\end{proof}

Now since the pushforward diagram is clean, the right projection $\rho_R : F \to T^*(S^2 \times S^2)$ is a smooth submersion whose image

$$\rho_R(F) = C_{\bar{V}} = \text{Fl}(\Delta_{T^*_H S^2}) \bigcup \text{Fl}(\text{graph} \, r_H|_{T^*_HS^2})$$

is a Lagrangian submanifold of $T^*S^2 \times T^*S^2$. We now describe how the half densities on $N^*_\pi$ and $\Delta^*C_V$ determine a half density on the image $\rho_R(F) = C_{\bar{V}}$. More precisely, at each point $p \in F$, the clean diagram determines an element $\mu \otimes \nu^\frac{1}{2} \in \abs{\ker d(\rho_R)_p} \otimes \abs{T_{\rho_R(p)} C_{\bar{V}}}^\frac{1}{2}$. The half density at the point $q \in C_{\bar{V}}$ is then given by integrating the density over the fiber of $\rho_R$ over q:

\begin{equation}
    \left( \int_{\rho_R^{-1}(q)} \mu \right) \nu^\frac{1}{2}
\end{equation}

First consider the sequence of maps 

$$0 \to T_p F \to T_{i_F(p)} (\Delta^*C_V \times N^*_\pi) \to \im \tau \to 0$$

Where $\tau$ is the map above. Because the diagram is clean, this sequence is exact. We suppose that $p = i_F(\mathbf{t}, x,\xi,x,\xi')$. We will make use of several different bases which we pause to notate here. First, let $\mathcal{B} = (H_{I_2},\partial_\theta, \partial_\rho, \partial_\eta) \in T(T^*S^2)$. We will write $di_{N^*_\pi}(\partial_{\mathbf{t}} \otimes \mathcal{B})$ denote the basis on $T(N^*_\pi)$ obtained by pushing forward the product basis on $T^2 \times T^*S^2 \times T^*S^2$ determined by $\partial_{\mathbf{t}}$ and $\mathcal{B}$. We also let $\mathcal{B}'$ denote the basis $(\partial_\theta , \partial_\theta), (\partial_\eta , \partial_\eta),(\partial_\rho, 0) , (0,\partial_\rho) \in TC_H$ and similarly, $di_{\Delta^*C_V}(\partial_{\mathbf{t}} \otimes \mathcal{B}')$ denote the basis on $T(\Delta^*C_V)$ obtained by pushing forward the product basis on $T^2 \times C_H$.

\vspace{5mm}

Now, since both smooth branches of $\widehat{C}_H$ are graphs over $T^*_HS^2$, we have a natural half density $\mu^\frac{1}{2} \in \abs{T(T^2 \times \widehat{C}_H)}^{\frac{1}{2}}$ which pulls back to $\abs{d\mathbf{t}}^\frac{1}{2} \otimes \abs{d\theta \wedge d\eta \wedge d\rho}^\frac{1}{2}$ on $T^2 \times T^*_HS^2$. We let $\mathscr{B}$ be a basis of $T_pF$ such that $\mu^{\frac{1}{2}}(\mathscr{B}) = 1$. We complete this to a basis of $T(\Delta^*C_V \times N^*_\pi)$ by adding the 10 vectors $\mathbf{0} \otimes di_{N^*_\pi}(\partial_\mathbf{t} \otimes \mathcal{B}) $ in addition to the vector $(0 , d\mathcal{P}\partial_\rho, 0 , d\Phi_\mathbf{t} \partial_\rho , \mathbf{0})$. We claim that the change of basis matrix between this completed basis and the product basis $di_{\Delta^*C_{V}}(\partial_\mathbf{t} \otimes \mathcal{B}') \otimes \mathbf{0}$, $\mathbf{0} \otimes di_{N^*_\pi}(\partial_\mathbf{t} \otimes \mathcal{B})$ has determinant equal to $\pm 1$.

\begin{Lemma} \label{lem:sympbasechange}
Let $\abs{\Omega}^{\frac{1}{2}}$ denote the symplectic half density on $T^*S^2$. Then $$\Omega^\frac{1}{2}(\mathcal{B}) = \bigg{|}\frac{\partial I_2}{\partial \rho}\bigg{|}^\frac{1}{2}$$
\end{Lemma}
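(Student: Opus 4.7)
The plan is to evaluate the symplectic half-density directly in the geodesic polar coordinates $(r,\theta,\rho,\eta)$ on $T^*S^2$. In these coordinates the canonical symplectic form is $\omega = d\rho\wedge dr + d\eta\wedge d\theta$, and the associated symplectic volume density is $\abs{\Omega} = \abs{dr\wedge d\theta\wedge d\rho\wedge d\eta}$. By the definition of a half-density,
$$\abs{\Omega}^{1/2}(v_1,v_2,v_3,v_4) = \abs{\det M}^{1/2},$$
where $M$ is the matrix whose $j$-th column is the coordinate vector of $v_j$ in the frame $(\partial_r,\partial_\theta,\partial_\rho,\partial_\eta)$.

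Next, I would write out the Hamilton vector field of $I_2$ using the defining identity $\omega(H_{I_2},\cdot) = dI_2$, which in coordinates gives
$$H_{I_2} = \frac{\partial I_2}{\partial \rho}\partial_r + \frac{\partial I_2}{\partial \eta}\partial_\theta - \frac{\partial I_2}{\partial r}\partial_\rho - \frac{\partial I_2}{\partial \theta}\partial_\eta.$$
Since the remaining three vectors $\partial_\theta,\partial_\rho,\partial_\eta$ of $\mathcal{B}$ are themselves members of the coordinate frame, the matrix $M$ is standard in columns $2$--$4$, and its first row has only one nonzero entry, namely the $\partial_r$-component of $H_{I_2}$:
$$M = \begin{pmatrix} \partial_\rho I_2 & 0 & 0 & 0 \\ \partial_\eta I_2 & 1 & 0 & 0 \\ -\partial_r I_2 & 0 & 1 & 0 \\ -\partial_\theta I_2 & 0 & 0 & 1 \end{pmatrix}.$$
Expansion along the first row immediately gives $\det M = \partial I_2/\partial\rho$, and taking $\abs{\det M}^{1/2}$ yields the claimed identity.

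There is essentially no obstacle to this computation: it reduces to a $4\times 4$ triangular determinant once $H_{I_2}$ is expressed in coordinates. The only thing to monitor is the sign convention in the definitions of $\omega$ and $H_{I_2}$, but since only $\abs{\det M}$ enters the half-density these sign ambiguities drop out of the final answer. The lemma is then set up to feed into the subsequent change-of-basis step relating the pulled-back half-density $\mu^{1/2}$ on $T^2\times\widehat{C}_H$ to the product of the canonical half-densities on $\Delta^*C_V$ and $N^*_\pi$.
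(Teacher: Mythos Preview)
Your proof is correct and is essentially the same argument as the paper's, only written out in more detail: the paper simply observes that in the canonical frame $(\partial_r,\partial_\theta,\partial_\rho,\partial_\eta)$ the coefficient of $\partial_r$ in $H_{I_2}$ is $\partial I_2/\partial\rho$, so the change-of-basis determinant is $\lvert\partial I_2/\partial\rho\rvert$. Your remark that the sign conventions on $\omega$ and $H_{I_2}$ are irrelevant because only $\lvert\det M\rvert$ enters is exactly the right way to dispose of that ambiguity.
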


\begin{proof}
Since $(r,\theta,\rho,\eta)$ are canonical coordinates if we write $H_{{I}_2}$ in terms of the basis $\partial_r , \partial_\theta,\partial_\rho,\partial_\eta$, the coefficient of $\partial_r$ is $\frac{\partial I_2}{\partial \rho}$. Hence the change of basis from this symplectic basis to $\mathcal{B}$ has determinant $\abs{\frac{\partial I_2}{\partial \rho}}$
\end{proof}

Now let $\sigma \in \abs{T(\Delta^*C_V \times N^*_\pi)}^\frac{1}{2}$ denote the tensor product of the natural half density on $N^*_\pi$ and the symbol of $V_\epsilon(\mathbf{t})$ on $\Delta^*C_V$. Then in light of the lemma, $\sigma$ on the completed basis above is equal to 

\begin{equation} \label{eq:sigmaoncompletedbasis}
(1 - \chi_\epsilon)(x,\xi)\bigg{|}\frac{\partial I_2}{\partial \rho}(x,\xi)\bigg{|}
\end{equation}

This means that the exact sequence, together with our reference half density $\mu^\frac{1}{2}$ determines the half density $\nu^\frac{1}{2}$ on $\im \tau$ which assigns the value \eqref{eq:sigmaoncompletedbasis} to the 11 vectors $di_{N^*_\pi}(\mathbf{t} \otimes \mathcal{B})$, $(0 , -d\mathcal{P}\partial_\rho, 0 , -d\Phi_\mathbf{t} \partial_\rho)$. We complete this to a basis of $T(T^*(T^2 \times S^2 \times S^2))$ by adding the vector $(0, \partial_{\tau_1} , 0 , 0)$. Then the symplectic half density on this basis is equal to $\abs{\partial I_2 / \partial \rho}^\frac{3}{2}$. Hence, using the exact sequence

$$0 \to \im \tau \to T(T^*(T^2 \times S^2 \times S^2)) \to \text{coker} \, \tau \to 0$$

We get the negative half density on $\text{coker} \, \tau$ which assigns the value $(1 - \chi_\epsilon)(x,\xi)\abs{\partial I_2 / \partial \rho}^{-\frac{1}{2}}$ to the residue class of $(0,\partial_{\tau_1}, 0 , 0)$.To finish, we use the exact sequence associated the submersion $\rho_R$:

$$0 \to \ker d(\rho_R)_p \to T_p F \to T_{\rho_R(p)} C_V \to 0$$

Note that this is the exact sequence determined by either $i_D$ or $i_R$ of proposition \ref{prop:flowoutsubmersions} depending on whether $(x,\xi,x,\xi')$ is the diagonal or reflection branch of $\widehat{C}_H$. Now $\text{coker}\, \tau$ is symplectic dual to $\ker d\rho_R$. This allows us to identify the minus half density on $\text{coker}\, \tau$ with the half density 

$$(1 - \chi_\epsilon)(x,\xi) \bigg{|}\frac{\partial I_2}{\partial \rho}\bigg{|}^{-\frac{1}{2}} \abs{d\theta}^\frac{1}{2}$$

The symbol of $\bar{V}_\epsilon$ on the diagonal branch is therefore equal to

$$
\sigma(\bar{V}_\epsilon)(\Phi_\mathbf{t}(x,\xi),\Phi_{\mathbf{t}}(x,\xi)) = (2\pi)^{-2} \left( \int_{i_D^{-1}(\Phi_\mathbf{t}(x,\xi),\Phi_\mathbf{t}(x,\xi))} (1 - \chi_\epsilon)(y,\eta) \bigg{|}\frac{\partial I_2}{\partial \rho}(y,\eta) \bigg{|}^{-\frac{1}{2}} \abs{d\theta} \right) \frac{\mu^\frac{1}{2}}{\abs{d\theta}^\frac{1}{2}}
$$

and on the reflection branch we have 

$$
\sigma(\bar{V}_\epsilon)(\Phi_\mathbf{t}(x,\xi),\Phi_{\mathbf{t}}(r_H(x,\xi))) = (2\pi)^{-2} \left( \int_{i_R^{-1}(\Phi_\mathbf{t}(x,\xi),\Phi_\mathbf{t}(x,\xi))} (1 - \chi_\epsilon)(y,\eta) \bigg{|}\frac{\partial I_2}{\partial \rho}(y,\eta) \bigg{|}^{-\frac{1}{2}} \abs{d\theta} \right) \frac{\mu^\frac{1}{2}}{\abs{d\theta}^\frac{1}{2}}
$$

The proof is then completed by the following proposition:

\begin{Prop}
For $(x,\xi) \in T^*_HS^2$ in the support of the cutoff $1 - \chi_\epsilon(x,\xi)$, we have

\begin{equation} \label{eq:I2deriv}
    \frac{\partial I_2}{\partial \rho}(x,\xi) = \frac{\sqrt{1 - \frac{I_1^2(x,\xi)}{\abs{\xi}_g^2a(r_0)^2}}}{\omega_2(x,\xi)}
\end{equation}

where $\omega_2$ is the second component of the frequency vector $\omega_2 = \frac{\partial K}{\partial I_2}$.

\end{Prop}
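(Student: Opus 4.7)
The plan is to derive the identity by differentiating the relation $\abs{\xi}_g = K(I_1,I_2)$ of equation \eqref{eq:defnK} with respect to the fiber coordinate $\rho$, at a fixed point with $r = r_0$ and fixed $\theta,\eta$. On the left, in geodesic polar coordinates, $\abs{\xi}_g^2 = \rho^2 + \eta^2/a(r)^2$, so
$$\pder{\abs{\xi}_g}{\rho} = \frac{\rho}{\abs{\xi}_g}.$$
On the right, the chain rule combined with $I_1 = p_\theta = \eta$ (hence $\partial I_1/\partial \rho = 0$) gives
$$\pder{K}{I_1}\pder{I_1}{\rho} + \pder{K}{I_2}\pder{I_2}{\rho} = \omega_2(x,\xi)\,\pder{I_2}{\rho},$$
since $\omega_2 = \partial K/\partial I_2$ by definition of the frequency vector.

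Equating the two expressions and solving yields $\partial I_2/\partial \rho = \rho/(\abs{\xi}_g \omega_2)$. It remains to rewrite $\rho/\abs{\xi}_g$ in terms of the invariants. At $r = r_0$, the identity $\abs{\xi}_g^2 = \rho^2 + \eta^2/a(r_0)^2$ and $I_1 = \eta$ rearrange to
$$\rho^2 = \abs{\xi}_g^2\left(1 - \frac{I_1^2}{\abs{\xi}_g^2 a(r_0)^2}\right).$$
On the support of the cutoff $1 - \chi_\epsilon$, covectors are bounded away from both $T^*H$ (where $\rho = 0$) and $N^*H$ (where $I_1 = 0$), so both sides are strictly positive and one may extract the square root. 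Choosing the branch with $\rho > 0$ (the ``northern-pointing'' covectors from proposition \ref{prop:flowoutsubmersions}), one obtains the stated formula \eqref{eq:I2deriv}.

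The only conceptual subtlety is the sign ambiguity in $\rho = \pm \abs{\xi}_g\sqrt{1 - I_1^2/(\abs{\xi}_g^2 a(r_0)^2)}$: the reflection map $r_H$ sends $\rho \mapsto -\rho$, so $\partial I_2/\partial \rho$ picks up a sign on the reflection branch. This is harmless because only $\abs{\partial I_2/\partial \rho}^{-\frac{1}{2}}$ enters the symbol formula for $\bar V_\epsilon$, which is what the final integration in the proof of proposition \ref{prop:Vbardesc} uses. No further step is needed beyond substituting \eqref{eq:I2deriv} into the integrals computed above.
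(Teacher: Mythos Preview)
Your proof is correct and follows essentially the same approach as the paper: both differentiate the relation between $\abs{\xi}_g$ and $(I_1,I_2)$ with respect to $\rho$ using the chain rule, noting that $I_1 = \eta$ is independent of $\rho$ and that $\partial \abs{\xi}_g/\partial \rho = \rho/\abs{\xi}_g$ at $r = r_0$. The paper phrases this by writing $I_2 = G(\abs{\xi}_g, p_\theta)$ and using $\partial I_2/\partial \abs{\xi}_g = \omega_2^{-1}$, which is just the inverse-function version of your computation; your explicit discussion of the sign ambiguity is a welcome clarification that the paper leaves implicit.
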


\begin{proof}

We have $I_2 = G(\abs{\xi}_g , p_\theta)$. Since $p_\theta$ does not depend on $\rho$, 

$$\frac{\partial I_2}{\partial \rho} = \frac{\partial I_2}{\partial \abs{\xi}_g} \frac{\partial \abs{\xi}_g}{\partial \rho}$$

Now for $(x,\xi) \in T^*_HS^2$, we have $\abs{\xi}_g = \sqrt{\rho^2 + \frac{p_\theta^2}{a(r_0)^2}}$. So
$\frac{\partial I_2}{\partial \abs{\xi}_g} = \omega_2^{-1}(x,\xi)$ and

$$\frac{\partial \abs{\xi}_g}{\partial \rho} = \frac{\sqrt{\abs{\xi}_g^2 - \frac{p_\theta^2}{a(r_0)^2}}}{\abs{\xi}_g}$$

\end{proof}

Since the symbol of the cutoff, $\chi_\epsilon$ and all of the quanities appearing in \eqref{eq:I2deriv} are functions of $I_1$ and $I_2$, they are constant on the fibers of $i_D$ and $i_R$. Hence the integrals appearing above can be simplified to 

$$
\sigma(\bar{V}_\epsilon)(\Phi_\mathbf{t}(x,\xi),\Phi_{\mathbf{t}}(x,\xi)) = \frac{1}{\pi} (1 - \chi_\epsilon)(x,\xi) \left( \frac{\omega_2(x,\xi)}{\sqrt{1 - \frac{I_1^2(x,\xi)}{\abs{\xi}_g^2a(r_0)^2}}} \right)^\frac{1}{2} \frac{\mu^\frac{1}{2}}{\abs{d\theta}^\frac{1}{2}}
$$

$$
\sigma(\bar{V}_\epsilon)(\Phi_\mathbf{t}(x,\xi),\Phi_{\mathbf{t}}(r_H(x,\xi))) = \frac{1}{\pi} (1 - \chi_\epsilon)(x,\xi) \left( \frac{\omega_2(x,\xi)}{\sqrt{1 - \frac{I_1^2(x,\xi)}{\abs{\xi}_g^2a(r_0)^2}}} \right)^\frac{1}{2} \frac{\mu^\frac{1}{2}}{\abs{d\theta}^\frac{1}{2}}
$$

This completes the proof of proposition \ref{prop:Vbardesc}. We now want to show that $\bar{V}_\epsilon$ can be written as the sum of a pseudo-differential operator and a Fourier integral operator. 

\begin{Prop} \label{prop:symbolofPandF}
We have a decomposition $\bar{V}_\epsilon = P_\epsilon + F_\epsilon$ where $P_\epsilon$ is an order zero pseudo-differential operator with scalar symbol equal to 

$$\sigma(P_\epsilon)(y,\eta) = \frac{1}{\pi} (1-\chi_\epsilon)(y,\eta) \frac{\omega_2(y,\eta)}{\sqrt{1 - \frac{p_\theta^2(y,\eta)}{\abs{\eta}_y^2 a(r_0)^2}}} \abs{dy \wedge d\eta}^\frac{1}{2}$$

 $F_\epsilon \in I^0(S^2 \times S^2; \text{Fl}(\text{graph}\, r_H|_{T^*_HS^2}))$. The symbol of $F_\epsilon$ is the half density 

$$
\sigma(F_\epsilon)(\Phi_\mathbf{t}(x,\xi), \Phi_\mathbf{t}(r_H(x,\xi))) = \frac{1}{\pi} (1 - \chi_\epsilon)(x,\xi) \left( \frac{\omega_2(x,\xi)}{\sqrt{1 - \frac{I_1^2(x,\xi)}{\abs{\xi}_g^2 a(r_0)^2}}}\right)^\frac{1}{2} \frac{\mu^\frac{1}{2}}{\abs{d\theta}^\frac{1}{2}}
$$

where $\mu^\frac{1}{2}/\abs{d\theta}^\frac{1}{2}$ is the half density on the flow-out of the reflection graph determined in proposition \ref{prop:flowoutsubmersions}.

\end{Prop}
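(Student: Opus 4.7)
The plan is to use that the canonical relation $C_{\bar{V}}$ from Proposition \ref{prop:Vbardesc} splits, by Proposition \ref{prop:Chatstructure} and torus invariance, as the union of two smooth Lagrangian submanifolds $\text{Fl}(\Delta_{T^*_H S^2})$ and $\text{Fl}(\text{graph}\, r_H|_{T^*_H S^2})$, which meet only along the flow-out of $\Delta_{T^*H}$. The cutoff factor $(1-\chi_\epsilon)(x,\xi)$ appearing in the symbol of $\bar{V}_\epsilon$ vanishes in a conic neighborhood of both $N^*H$ and $T^*H$, so the microsupports of $\bar{V}_\epsilon$ on the two branches are disjoint. I would introduce a homogeneous pseudo-differential partition of unity $I = Q_D + Q_R$ whose pieces isolate the two branches (for instance, cutting off on $\{\rho\rho' > 0\}$ versus $\{\rho\rho' < 0\}$ on a conic neighborhood of $T^*_H S^2$, where $(x,\xi,x,\xi')$ is parametrized by Proposition~\ref{prop:Vbardesc}), and define $P_\epsilon, F_\epsilon$ by composing $\bar{V}_\epsilon$ with the resulting cutoffs. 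Each summand is a Fourier integral operator whose canonical relation is a single branch of $C_{\bar V}$, with half-density symbol obtained by restricting $\sigma(\bar V_\epsilon)$.

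Next I would observe that $\text{Fl}(\Delta_{T^*_H S^2}) \subset \Delta_{T^*S^2 \setminus 0}$, because $\Phi_\mathbf{t}$ acts diagonally; since each regular torus $T_c$ is a single orbit of the joint flow and meets $T^*_H S^2$ nontrivially, this flow-out in fact equals the full diagonal away from the singular tori $T_{\pm 1}$. Hence $P_\epsilon$ is a Lagrangian distribution attached to the diagonal, i.e.\ a homogeneous pseudo-differential operator of order $0$. To extract its scalar principal symbol I would convert the half-density $\mu^{1/2}/\abs{d\theta}^{1/2}$ on the diagonal branch into the form $(\text{scalar}) \cdot \abs{dy \wedge d\eta}^{1/2}$. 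Since $\mu^{1/2}/\abs{d\theta}^{1/2}$ evaluates to $1$ on the basis $(d\Phi_\mathbf{t} H_{I_2}, d\Phi_\mathbf{t}\partial_\theta, d\Phi_\mathbf{t}\partial_\rho, d\Phi_\mathbf{t}\partial_\eta)$ while $\abs{dy\wedge d\eta}^{1/2}$ evaluates to the square root of the symplectic volume of the same basis, Lemma~\ref{lem:sympbasechange} together with the symplectic invariance of $\Phi_\mathbf{t}$ gives
\[
\frac{\mu^{1/2}}{\abs{d\theta}^{1/2}} \;=\; \Bigl|\tfrac{\partial I_2}{\partial \rho}\Bigr|^{-1/2} \abs{dy \wedge d\eta}^{1/2}.
\]
Substituting the identity \eqref{eq:I2deriv} turns the extra $\abs{\partial I_2/\partial \rho}^{-1/2}$ factor into $\bigl(\omega_2/\sqrt{1 - I_1^2/(\abs{\xi}_g^2 a(r_0)^2)}\bigr)^{1/2}$, which multiplies the half-power already present in $\sigma(\bar V_\epsilon)$ to give the first-power expression claimed for $\sigma(P_\epsilon)$. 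Flow-invariance of the torus action allows evaluation of the resulting function at $(y,\eta)$ rather than at $(x,\xi)$.

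The term $F_\epsilon$ is then immediate: its canonical relation is the reflection flow-out, which by Proposition~\ref{prop:Chatstructure} is smoothly embedded away from $\Delta_{T^*H}$, and the cutoff $1-\chi_\epsilon$ excises that singular locus; its half-density symbol is simply the restriction to that branch of the symbol of $\bar V_\epsilon$ computed in Proposition~\ref{prop:Vbardesc}, matching the statement verbatim. The main technical issue is justifying the splitting itself: one has to verify that the pseudo-differential cutoffs $Q_D, Q_R$ can be chosen so that their essential supports microlocally separate the two branches within the microsupport of $\bar V_\epsilon$, and that no lower-order contribution leaks between them. This is precisely what the cutoff $1-\chi_\epsilon$ makes possible, by keeping us a definite distance from the tangential set $T^*H$ where the diagonal and reflection branches coalesce.
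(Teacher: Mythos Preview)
Your proposal is correct and follows essentially the same argument as the paper: separate the two branches of $C_{\bar V}$ by a microlocal cutoff (which is well-defined precisely because $1-\chi_\epsilon$ keeps the microsupport away from $T^*H$ where the branches coalesce), observe that the diagonal branch lies in $\Delta_{T^*S^2}$ so the corresponding piece is a $\Psi$DO, and convert the half-density $\mu^{1/2}/\abs{d\theta}^{1/2}$ to the symplectic half-density via Lemma~\ref{lem:sympbasechange} and \eqref{eq:I2deriv}. The paper implements the cutoff as a single function $\psi$ on $T^*S^2\times T^*S^2$ rather than your $Q_D+Q_R$, but this is a cosmetic difference.
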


\begin{proof}
Note that the two flow-out sets $\text{Fl}(\Delta_{T^*_HS^2}) \bigcup \text{Fl}(\text{graph}\,r_H|_{T^*_HS^2}$ are disjoint when $(x,\xi)$ is restricted to the support of a the cutoff $1 - \chi_\epsilon$. Since $V_\epsilon$ only has wave front set in the flow-outs of this region, we can let $\Psi \in C^\infty_c(T^*S^2 \times T^*S^2)$ be a smooth cutoff function such that $\psi = 1$ in a neighborhood of the diagonal flow-out and has support disjoint from the reflection flow-out. Then we have

$$\bar{V}_\epsilon = \widehat{\psi}\bar{V}_\epsilon + (I - \widehat{\psi})\bar{V}_\epsilon$$ 

The diagonal flow-out is inside $\Delta_{T^*S^2}$ so the first term is a pseudo-differential operator. The symbol is unchanged due to the fact that $\psi$ and $1 - \psi$ are equal to 1 on neighborhoods of the diagonal, reflected flow-outs. On the diagonal branch of the flow-out, we also have the natural symplectic half density $\abs{dy \wedge d\eta \wedge dy \wedge d\eta}^\frac{1}{2}$. It is easy to check that (see lemma \ref{lem:sympbasechange})

$$\frac{\mu^\frac{1}{2}}{\abs{d\theta}^\frac{1}{2}} = \bigg{|}\frac{\partial I_2}{\partial \rho}\bigg{|}^{-\frac{1}{2}} \abs{dy \wedge d\eta \wedge dy \wedge d\eta}^\frac{1}{2}$$

This accounts for the difference between the symbol of $P_\epsilon$ stated here and the symbol of $\bar{V}_\epsilon$ on the diagonal branch.

\end{proof}

\section{Calculation of weak-* limits: Proof of theorem 1.1}

In this section we compute the weak-* limits of the measures \eqref{eq:defnnuell}, \eqref{eq:defnmuell} by expanding their un-normalized versions in $\ell$. Recall that we let $\Pi_\ell : L^2(S^2,dV_g) \to L^2(S^2,dV_g)$ denote the orthogonal projection onto the $\widehat{I}_2 = \ell$ eigenspace. And if $A : C^\infty(S^2) \to C^\infty(S^2)$ is an operator which commutes with $D_\theta$. Then we have 

\begin{equation} \label{eq:traceformula}
    \text{Trace} \, f \left( \frac{D_\theta}{\ell} \right) A\Pi_\ell = \sum_{m = -\ell}^\ell \langle A\varphi^\ell_m , \varphi^\ell_m \rangle f\left( \frac{m}{\ell}\right)
\end{equation}

We will use the symbol calculus to expand the left hand side of \eqref{eq:traceformula} in powers of $\ell$. To begin with, we need a description of the operator $f(D_\theta / \ell)$.

\begin{Prop} \label{prop:angularpdo}
Let $f \in C^\infty_c(\R{})$. The operator $f\left( \frac{D_\theta}{\ell} \right)$ is a semi-classical pseudo-differential operator in the class $\Psi^{-\infty}_{\ell^{-1}}(S^2)$ with principal symbol equal to $f(p_\theta(y,\eta))$.

\end{Prop}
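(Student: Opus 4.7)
The plan is to exploit the Fourier representation of the functional calculus. Since $f \in C_c^\infty(\R)$, its Fourier transform $\hat f$ is Schwartz, and Fourier inversion gives
$$f(D_\theta/\ell) = \frac{1}{2\pi}\int_\R \hat f(\tau)\, e^{i\tau D_\theta/\ell}\, d\tau.$$
The key observation is that $e^{itD_\theta}$ is precisely the unitary $S^1$-rotation action on $S^2$, since $\partial_\theta$ is the Killing vector field generating that action. In the geodesic polar coordinates $(r,\theta)$ introduced in Section 2 it acts as the shift $(r,\theta)\mapsto(r,\theta+t)$.

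Using this, I would compute the Schwartz kernel of $f(D_\theta/\ell)$ directly. In the polar chart, after the substitution $s = \tau/\ell$ and evaluation of the Dirac delta produced by the shift, the kernel (taken with respect to Lebesgue density in the chart, so that the Riemannian volume factor $a(r)$ cancels) collapses to
$$K(r,\theta;r',\theta') \;=\; \frac{\ell}{2\pi}\,\delta(r-r')\int_\R f(\eta)\,e^{i\ell(\theta-\theta')\eta}\,d\eta,$$
which is exactly the oscillatory-integral form $\oph{f(\eta)}$ of a semi-classical $\Psi$DO with small parameter $h = \ell^{-1}$ and symbol independent of the fiber variable $\rho$ dual to $r$. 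Reading off the symbol and noting that $p_\theta(r,\theta,\rho,\eta) = \eta$ in these coordinates identifies the principal symbol as $f(p_\theta(y,\eta))$. Because $f$ has compact support, this symbol is rapidly decreasing in the $\eta$-direction, placing the operator in $\Psi^{-\infty}_{\ell^{-1}}$.

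To globalize, I would patch via a partition of unity subordinate to the polar chart together with small neighborhoods of the poles $N, S$. Near a pole the chart coordinates degenerate, but $D_\theta$ is a smooth first-order self-adjoint classical $\Psi$DO on all of $S^2$ whose principal symbol $p_\theta$ vanishes at the pole. I would invoke the general semi-classical functional calculus---the Helffer--Sj\"ostrand formula applied to the resolvent $(z - D_\theta/\ell)^{-1}$, which is a semi-classical $\Psi$DO for $\Imp z \neq 0$---to conclude that $f(D_\theta/\ell)$ belongs to $\Psi^{-\infty}_{\ell^{-1}}(S^2)$ globally, with full symbol admitting an asymptotic expansion in powers of $\ell^{-1}$ whose leading term is $f(p_\theta)$.

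The main obstacle is this globalization near the poles, where direct kernel computation is obstructed by the coordinate singularity; the polar-chart calculation carries the conceptual content of identifying the symbol, while the appeal to the general functional calculus is what ensures the $\Psi$DO structure persists through the pole points, at which $f(p_\theta)$ simply extends by continuity to the constant value $f(0)$.
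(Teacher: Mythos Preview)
Your approach is essentially the paper's: both use Fourier inversion to write $f(D_\theta/\ell)$ as an integral of the rotation group $e^{i\tau D_\theta/\ell}$, then exploit that this group is the explicit shift $(r,\theta)\mapsto(r,\theta+\tau/\ell)$ in polar coordinates to read off the semi-classical kernel and symbol. The only cosmetic difference is that the paper keeps the $\rho$-integral and writes the kernel as a full two-variable oscillatory integral before rescaling $(\rho,\eta)\mapsto(\rho/\ell,\eta/\ell)$, whereas you collapse the $r$-direction to $\delta(r-r')$; your added discussion of the poles via Helffer--Sj\"ostrand is more than the paper provides (it simply works in the polar chart), but is a reasonable way to handle the coordinate singularity.
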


\begin{proof}

Note that by Fourier inversion, we can write 

\begin{equation} \label{eq:fourierinv}
    f\left( \frac{D_\theta}{\ell} \right) = \frac{1}{2\pi} \int_{\R} \widehat{f}(t) e^{i\frac{t}{\ell}D_\theta} \, dt
\end{equation}

Becauase the flow of $D_\theta$ is just linear translation in the polar coordinates $(r,\theta,\rho,\eta)$, we can write

$$(\exp i\frac{t}{\ell}D_\theta)(r,\theta,r',\theta') = (2\pi)^{-2}\int_{\R^2} e^{i[(r-r')\rho + (\theta - \theta')\eta]} e^{i\frac{t}{\ell}\eta} \, d\rho \, d\eta$$

Now change variables $\rho' = \rho/\ell$, $\eta = \eta/\ell$. Then

$$(\exp i\frac{t}{\ell}D_\theta)(r,\theta,r',\theta') = \frac{\ell^2}{(2\pi)^2}\int_{\R^2} e^{i\ell[(r-r')\rho + (\theta - \theta')\eta]} e^{it\eta'} \, d\rho' \, d\eta'$$

Inserting this expression into \eqref{eq:fourierinv} and integrating in $t$ finishes the proof.

\end{proof}

We also need a description of $\Pi_\ell$ as a semi-classical Fourier integral operator. For details, see for instance theorem 1 of \cite{Z1}. Although this is written for the cluster projection of a Zoll Laplacian, the same argument applies to the operator $\widehat{I}_2$ considered here.

\begin{Prop} \label{prop:piell}
For $A \in \Psi^0$ a homogeneous order zero pseudo-differential operator, $A\Pi_\ell$ is a semi-classical Fourier integral operator of order $\frac{1}{2}$ associated to the canonical relation

$$C_{\Pi} = \{(x,\xi,y,\eta) \in \Sigma \times \Sigma ~|~ \exists t \in [0,2\pi) \exp tH_{I_2}(x,\xi) = (y,\eta) \}$$

Where $\Sigma = \{I_2 = 1 \}$. Along the parametrizing map $\iota_{\Pi} : S^1 \times \Sigma \to T^*S^2 \times T^*S^2$

$$\iota_{\Pi} : (t,x,\xi) \mapsto (x,\xi, \exp tH_{I_2}(x,\xi)) $$

The half density symbol pulls back to

$$\iota_{\Pi}^*\sigma(A\Pi_\ell) = \ell^{\frac{1}{2}}e^{-i \ell t} \abs{dt}^\frac{1}{2} \otimes \sigma(A) \abs{d\mu_L}^\frac{1}{2}$$

Where $d\mu_L$ is Liouville measure on the energy surface $\Sigma$ and $\sigma(A)$ is the scalar symbol of $A$ with respect to the canonical symplectic half density on $N^*\Delta$.

\end{Prop}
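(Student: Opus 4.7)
My plan is to represent $\Pi_\ell$ as a Fourier coefficient of the unitary group generated by $\widehat{I}_2$ and then read off the semiclassical FIO structure of $A\Pi_\ell$ from the homogeneous FIO structure of this group as already developed in Proposition 3.1. Since $\exp(2\pi i \widehat{I}_2) = \mathrm{Id}$, the spectrum of $\widehat{I}_2$ is integral and we have the exact identity
\begin{equation}
A\Pi_\ell \;=\; \frac{1}{2\pi}\int_0^{2\pi} e^{-i\ell t}\, A\, e^{it\widehat{I}_2}\, dt,
\end{equation}
which reduces the problem to understanding the operator family $U_2(t):=e^{it\widehat{I}_2}$ and the effect of integrating it against $e^{-i\ell t}$.

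Next I would invoke the specialization of Proposition 3.1 to the single generator $\widehat{I}_2$ (equivalently the Duistermaat--Guillemin description): $U_2(t)$ is a homogeneous FIO of order $-\tfrac14$ on $S^1\times S^2\times S^2$ associated to the space-time graph $C_{U_2}=\{(t,I_2(x,\xi),\exp tH_{I_2}(x,\xi),x,\xi)\}$, with half-density symbol pulling back to $|dt|^{1/2}\otimes|dx\wedge d\xi|^{1/2}$ along the obvious parametrization. Left composition with $A\in\Psi^0$ is transverse and preserves $C_{U_2}$, multiplying the symbol by the scalar $\sigma(A)$. The final step is to interpret the operation $\tfrac{1}{2\pi}\int_0^{2\pi}e^{-i\ell t}(\cdot)\,dt$ as a pushforward under $\pi:S^1\times S^2\times S^2\to S^2\times S^2$ paired with an oscillatory factor; in semiclassical language with $h=\ell^{-1}$ this is the restriction of the dual variable $\tau=I_2$ to the level $\tau=\ell$. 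Because $\tau=I_2(x,\xi)$ on $C_{U_2}$, this restriction is clean, and under the homogeneous rescaling $\xi\mapsto\xi/\ell$ the set $\{I_2=\ell\}$ maps onto the energy surface $\Sigma=\{I_2=1\}$. The resulting Lagrangian in $T^*S^2\times T^*S^2$ is precisely $C_\Pi$, parametrized by $\iota_\Pi:S^1\times\Sigma\to T^*S^2\times T^*S^2$.

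For the symbol, the Jacobian converting the homogeneous symplectic half-density on $\{I_2=\ell\}$ to Liouville half-density $|d\mu_L|^{1/2}$ on $\Sigma$ (via the standard identity $d\mu_L = d\mu/dI_2$ applied to the symplectic volume) together with the semiclassical rescaling $\xi\mapsto\xi/\ell$ produces the factor $\ell^{1/2}$, while the phase $e^{-i\ell t}$ is simply inherited from the Fourier integral. For the order: $AU_2(t)$ has homogeneous order $-\tfrac14$, the pushforward in $t$ (which is really a Fourier restriction) shifts the order by $-\tfrac14$, and converting from the homogeneous to the semiclassical class on $S^2\times S^2$ at scale $h=\ell^{-1}$ shifts by $+1$, giving the stated order $\tfrac12$.

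The main obstacle I expect is the careful bookkeeping in translating between the homogeneous calculus of Section 3 and the semiclassical calculus in which the statement is phrased: identifying the correct Maslov factors, tracking how the stationary phase in the $I_2$-direction converts the homogeneous half-density on $T^*S^2$ into $|d\mu_L|^{1/2}$ on $\Sigma$, and confirming that the factor $\ell^{1/2}$ is the exact normalization produced by this rescaling. Once these bookkeeping items are settled, the canonical relation and symbol fall out of transverse/clean composition. As the excerpt notes, essentially the same argument appears in the proof of Theorem~1 of \cite{Z1} with the cluster projection of a Zoll Laplacian playing the role of $\Pi_\ell$ here.
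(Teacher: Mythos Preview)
Your proposal is correct and follows exactly the approach the paper has in mind: the paper does not supply its own proof but simply refers to Theorem~1 of \cite{Z1}, and what you have outlined---writing $\Pi_\ell$ as the $\ell$-th Fourier coefficient of $e^{it\widehat{I}_2}$, invoking the Duistermaat--Guillemin/Proposition~3.1 description of this group, composing transversally with $A$, and then pushing forward in $t$ with the semiclassical rescaling---is precisely that argument. Your caveat about bookkeeping (Maslov data, the conversion $|dx\wedge d\xi|^{1/2}\to |d\mu_L|^{1/2}$ via $d\mu_L=d\mu/dI_2$, and the $\ell^{1/2}$ normalization) is the only place requiring care, and it is handled in the cited reference.
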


\subsection{Weak-* limit of $\nu_\ell(B)$}

Let $B \in \Psi^0(S^2)$ and $\bar{B}$ be the average \eqref{eq:torusavg} of $B$ with respect to the torus action $U(\mathbf{t})$. Then the un-normalized version of $\nu_\ell(B)$ tested against $f \in C_c^\infty(-1,1)$ is

$$\sum_{m = -\ell}^\ell \langle B \varphi^\ell_m , \varphi^\ell_m \rangle f\left( \frac{m}{\ell} \right) = \text{Trace}\, f\left( \frac{D_\theta}{\ell}\right)\bar{B} \Pi_\ell$$

The right hand side is the trace of a semi-classical Fourier integral operator and by standard symbol calculus it has the leading order asymptotics 

$$\sum_{m = -\ell}^\ell \langle B \varphi^\ell_m , \varphi^\ell_m \rangle f\left( \frac{m}{\ell} \right) = \ell\int_{\Sigma} f(p_\theta) \sigma(\bar{B}) \, d\mu_L + O(1)$$

Similarly, the normalizing coefficient $N_\ell$ is

$$N_\ell = \text{Trace} \, \bar{B}\Pi_\ell = \ell \int_{\Sigma} \sigma(\bar{B}) \, d\mu_L + O(1)$$

Finally, since $\sigma(\bar{B})$ is just the average of $\sigma(B)$ with respect to the torus action $\Phi_\mathbf{t}$, we have $\int_\Sigma \sigma(\bar{B}) \, d\mu_L = \int_\Sigma \sigma(B) \, d\mu_L = \omega(B)$. We also write 

$$\int_\Sigma f(p_\theta)\sigma(\bar{B}) \, d\mu_L = \int_{-1}^1 f(c) \int_{T_c} \sigma(\bar{B}) \, d\mu_{L,c} dc  = \int_{-1}^1 f(c) \widehat{\sigma(B)}(c) \, dc$$

This completes the proof of theorem 1.1 (b) when $f$ is compactly supported. As for $\mu_\ell$, the full statement follows from the fact that $\widehat{\sigma(B)}(c)$ is an $L^1$ function on $[-1,1]$.

\subsection{Weak-* limit of $\mu_\ell$}

To begin with, we need to relate the un-normalized version of $\eqref{eq:defnmuell}$ to a trace formula.

\begin{Prop}
Let $f \in C^\infty_c(-1,1)$. For each $\epsilon > 0$, 

\begin{equation}
    \sum_{m = -\ell}^\ell \lpnorm{\varphi^\ell_m}{2}{H}^2 f\left(\frac{m}{\ell}\right) = \text{Trace} \, f\left(\frac{D_\theta}{\ell} \right) \bar{V}_\epsilon \Pi_\ell + R(\epsilon,\ell)
\end{equation}

where 

$$\limsup_{\ell \to \infty} \frac{\abs{R(\epsilon,\ell)}}{\ell} = O(\epsilon) $$

\end{Prop}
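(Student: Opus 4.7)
The plan is to start from $\lpnorm{\varphi^\ell_m}{2}{H}^2 = \langle \gamma_H^*\gamma_H \varphi^\ell_m, \varphi^\ell_m\rangle$ and apply the trace identity \eqref{eq:traceformula}. Because $D_\theta$ is self-adjoint with $D_\theta \varphi^\ell_m = m\varphi^\ell_m$ and $f$ is real, the identity
$$
\sum_{m=-\ell}^{\ell} f(m/\ell)\langle A\varphi^\ell_m,\varphi^\ell_m\rangle = \Tr\bigl[f(D_\theta/\ell)\, A\,\Pi_\ell\bigr]
$$
holds for \emph{any} operator $A$, whether or not $A$ commutes with $D_\theta$. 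Taking $A=\gamma_H^*\gamma_H$ and then decomposing via Proposition \ref{prop:restrictiondecomp} splits the right-hand side into three traces involving $(\gamma_H^*\gamma_H)_{\geq\epsilon}$, $(\gamma_H^*\gamma_H)_{\leq\epsilon}$, and the smoothing remainder $K_\epsilon$.

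Next I would invoke the torus averaging trick. Since $U(\mathbf{t})\varphi^\ell_m = e^{i(t_1 m+t_2\ell)}\varphi^\ell_m$, replacing any operator $A$ by its torus average $\bar A$ leaves the diagonal matrix element $\langle A\varphi^\ell_m,\varphi^\ell_m\rangle$ unchanged, so the first trace equals $\Tr[f(D_\theta/\ell)\bar V_\epsilon \Pi_\ell]$, the main term in the claim. The $K_\epsilon$ trace gets absorbed into $R(\epsilon,\ell)$ but is entirely negligible: Proposition \ref{prop:restrictiondecomp} provides pointwise bounds $\langle K_\epsilon\varphi_{\lambda_j},\varphi_{\lambda_j}\rangle = O_\epsilon(\lambda_j^{-\infty})$, and since every joint eigenfunction in the range of $\Pi_\ell$ has Laplace eigenvalue $\lambda_\ell^2 = K(m,\ell)^2 \gtrsim \ell^2$ uniformly in $m$, summing the $2\ell+1$ terms gives $O_\epsilon(\ell^{-\infty}) = o(\epsilon\ell)$.

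The substance of the argument is controlling the middle trace. Since $\hat\chi^n_\epsilon = \phi_\epsilon(\hat I_1/\hat I_2)$ and $\hat\chi^t_\epsilon = \psi_\epsilon(\hat I_1/\hat I_2)$ are functions of the commuting action operators, they act diagonally on the joint eigenbasis by the scalar $\chi_\epsilon(m/\ell) = \phi_\epsilon(m/\ell) + \psi_\epsilon(m/\ell)$. Unpacking $(\gamma_H^*\gamma_H)_{\leq\epsilon} = \hat\chi_{\epsilon/2}\gamma_H^*\gamma_H\hat\chi_\epsilon$ and using $\langle \gamma_H^*\gamma_H\varphi^\ell_m,\varphi^\ell_m\rangle = \lpnorm{\varphi^\ell_m}{2}{H}^2$, the middle trace becomes
$$
\sum_m f(m/\ell)\,\chi_{\epsilon/2}(m/\ell)\chi_\epsilon(m/\ell)\lpnorm{\varphi^\ell_m}{2}{H}^2.
$$
Fix $\delta>0$ with $\supp f\subset[-1+\delta,1-\delta]$ and take $\epsilon<\delta/2$. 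Then the tangential factors $\psi_{\epsilon/2},\psi_\epsilon$ vanish on $\supp f$, and the cross terms $\phi_{\epsilon/2}\psi_\epsilon$ and $\psi_{\epsilon/2}\phi_\epsilon$ vanish by disjoint supports, so only the normal cutoff piece survives and
$$
\abs{R(\epsilon,\ell)}\leq \|f\|_\infty \sum_{|m|\leq\epsilon\ell}\lpnorm{\varphi^\ell_m}{2}{H}^2.
$$

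The main obstacle is to show that this remaining sum is $O(\epsilon\ell)$. I would establish a uniform bound $\lpnorm{\varphi^\ell_m}{2}{H}^2 = O(1)$ for $m/\ell$ in any compact subset of $(-1,1)$ via separation of variables: on a surface of revolution one has $\varphi^\ell_m(r,\theta) = (2\pi)^{-1/2}g^\ell_m(r)e^{im\theta}$, so $\lpnorm{\varphi^\ell_m}{2}{H}^2 = \abs{g^\ell_m(r_0)}^2$ with $g^\ell_m$ an $L^2$-normalized solution of a one-dimensional Sturm--Liouville equation whose effective potential depends on the ratio $m/\ell$. When $|m|/\ell \leq 1-\delta$ the classical turning points $r_1,r_2$ of this ODE are strictly separated from $r_0$, and standard WKB asymptotics yield $\abs{g^\ell_m(r_0)}^2 = O(1)$ uniformly on this range. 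Given the uniform bound, the remaining sum contains at most $2\epsilon\ell+1$ nonzero terms of size $O(1)$, producing $\abs{R(\epsilon,\ell)}/\ell = O(\epsilon)$, which is stronger than the $\limsup$ stated in the proposition.
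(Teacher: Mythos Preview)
Your proposal is correct and differs from the paper's argument mainly in how you control the $(\gamma_H^*\gamma_H)_{\leq\epsilon}$ contribution. You exploit the fact, special to this setting, that the cutoffs $\widehat{\chi}_\epsilon = \phi_\epsilon(\widehat{I}_1/\widehat{I}_2) + \psi_\epsilon(\widehat{I}_1/\widehat{I}_2)$ are functions of the commuting action operators and therefore act diagonally on the joint eigenbasis by the scalar $\chi_\epsilon(m/\ell)$; this immediately reduces the remainder to a sum over $\abs{m}\leq\epsilon\ell$ of the full restricted norms $\lpnorm{\varphi^\ell_m}{2}{H}^2$, which you then bound term by term via one-dimensional WKB. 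The paper instead bounds the remainder by sums of the form $\ell^{-1}\sum_m \lpnorm{\gamma_H \widehat{\chi}_\epsilon^j \varphi^\ell_m}{2}{H}^2$ and invokes the pointwise local Weyl law for $\widehat{I}_2$: since the symbol of $\widehat{\chi}_\epsilon^j$ is supported in a region of Liouville volume $O(\epsilon)$ inside $\Sigma$, one has $\limsup_\ell \ell^{-1}\sum_m \abs{\widehat{\chi}_\epsilon^j\varphi^\ell_m(x)}^2 = O(\epsilon)$ pointwise, and integrating over $H$ finishes. Your route is more elementary in spirit, but the uniform bound $\abs{g^\ell_m(r_0)}^2 = O(1)$ is itself a nontrivial input (uniformity in the parameter $m/\ell$, and correct handling of the $L^2$ normalization near the poles where $a(r)\to 0$) that would need a reference or a separate argument; the paper's route sidesteps any individual eigenfunction estimate by using only averaged Weyl-type asymptotics already available from the FIO description of $\Pi_\ell$.
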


\begin{proof}

Note that by proposition \ref{prop:restrictiondecomp}, we have 

\begin{equation}
\begin{aligned}
\sum_{m = -\ell}^{\ell} \lpnorm{\varphi^\ell_m}{2}{H}^2 f\left( \frac{m}{\ell} \right) &= \sum_{m = -\ell}^{\ell} \langle (\gamma_H^*\gamma_H)_{\geq \epsilon} \varphi^\ell_m , \varphi^\ell_m \rangle f\left( \frac{m}{\ell} \right) + \sum_{m = -\ell}^{\ell} \langle (\gamma_H^*\gamma_H)_{\leq \epsilon} \varphi^\ell_m , \varphi^\ell_m \rangle f\left( \frac{m}{\ell} \right) \\
&+ \sum_{m = -\ell}^\ell \langle K_\epsilon \varphi^\ell_m , \varphi^\ell_m \rangle f \left( \frac{m}{\ell} \right)
\end{aligned}
\end{equation}

The first term on the right hand side is just the trace appearing in the proposition. Further, since $\abs{\langle K_\epsilon \varphi^\ell_m , \varphi^\ell_m \rangle} = O_\epsilon (\ell^{-\infty})$, we just need to show that 

\begin{equation}
    \limsup \frac{1}{\ell} \bigg{|}\sum_{m = -\ell}^{\ell} \langle (\gamma_H^*\gamma_H)_{\leq \epsilon} \varphi^\ell_m , \varphi^\ell_m \rangle f\left( \frac{m}{\ell} \right) \bigg{|} = O(\epsilon) 
\end{equation}

As in the discussion on page 37 of \cite{TZ1}, we can bound the sum 

$$\frac{1}{\ell} \bigg{|}\sum_{m = -\ell}^{\ell} \langle (\gamma_H^*\gamma_H)_{\leq \epsilon} \varphi^\ell_m , \varphi^\ell_m \rangle f\left( \frac{m}{\ell} \right) \bigg{|}$$

By a sum of terms of the form 

$$\frac{1}{\ell} \sum_{m = -\ell}^\ell ||\gamma_H \widehat{\chi}_\epsilon^j \varphi^\ell_m||^2_{L^2}(H)$$

where $\widehat{\chi}^j_\epsilon$ is either the tangential or the normal cutoff operator. In both cases, the symbol of the operator appearing is supported inside a set of volume $O(\epsilon)$ inside $\Sigma$. By the pointwise Weyl law,

$$\limsup_{\ell \to \infty } \frac{1}{\ell} \sum_{-\ell}^\ell \abs{\widehat{\chi}^j_\epsilon \varphi^\ell_m(x)}^2 = O(\epsilon) $$

 and integrating this along $H$ preserves this bound.

\end{proof}

\begin{Prop}
For each $\epsilon > 0$,

$$\text{Trace} \, f\left(\frac{D_\theta}{\ell} \right) \bar{V}_\epsilon \Pi_\ell = 4\pi \ell \left(\int_{-1}^1 f(c)(1 - \chi_\epsilon)(c)  \frac{\omega_2(c,1)}{\sqrt{1 - \frac{c^2}{K(c,1)^2 a(r_0)^2}}} \, dc \right) + O_\epsilon(1)$$

\end{Prop}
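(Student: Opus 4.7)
The plan is to split the trace via the decomposition $\bar V_\epsilon = P_\epsilon + F_\epsilon$ of Proposition~\ref{prop:symbolofPandF}, and to show that the pseudo-differential piece $P_\epsilon$ produces the full claimed leading order while the FIO piece $F_\epsilon$ contributes only $O_\epsilon(1)$. Hence
\begin{equation*}
\Tr\, f(D_\theta/\ell)\,\bar V_\epsilon\,\Pi_\ell \;=\; \Tr\, f(D_\theta/\ell)\,P_\epsilon\,\Pi_\ell \;+\; \Tr\, f(D_\theta/\ell)\,F_\epsilon\,\Pi_\ell,
\end{equation*}
and the two terms will be analyzed separately.

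For the pseudo-differential contribution, $P_\epsilon \in \Psi^0(S^2)$ is $T^2$-averaged by construction, hence commutes with $D_\theta$ and $\widehat I_2$. This places us in the setting of the proof of part~(b): combining the trace identity \eqref{eq:traceformula} with the description of $\Pi_\ell$ as a semi-classical FIO (Proposition~\ref{prop:piell}) and of $f(D_\theta/\ell)$ as a semi-classical $\Psi$DO with symbol $f(p_\theta)$ (Proposition~\ref{prop:angularpdo}) gives
\begin{equation*}
\Tr\, f(D_\theta/\ell)\,P_\epsilon\,\Pi_\ell \;=\; \ell \int_\Sigma f(p_\theta)\,\sigma(P_\epsilon)\,d\mu_L \;+\; O_\epsilon(1).
\end{equation*}
Every factor in the symbol formula of Proposition~\ref{prop:symbolofPandF} is a function of the action variables $I_1,I_2$, so $\sigma(P_\epsilon)$ is $T^2$-invariant, constant along each regular torus $T_c$, and $p_\theta \equiv c$ there. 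Disintegrating $d\mu_L$ via the moment map, $\int_\Sigma(\cdot)\,d\mu_L = \int_{-1}^{1} dc \int_{T_c}(\cdot)\,d\mu_{c,L}$, and using $\int_{T_c}d\mu_{c,L} = (2\pi)^2$, the $\Sigma$-integral collapses to a one-dimensional integral over $c$. Evaluating $\sigma(P_\epsilon)$ on a representative covector in $T_c \cap T^*_H S^2$, where $|\xi|_g = K(c,1)$ so that $p_\theta^2/(|\xi|_g^2 a(r_0)^2) = c^2/(K(c,1)^2 a(r_0)^2)$, and combining $(2\pi)^2/\pi = 4\pi$, produces exactly the stated leading term.

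For the FIO term, the aim is to show $\Tr\, f(D_\theta/\ell)\,F_\epsilon\,\Pi_\ell = O_\epsilon(1)$. The canonical relation of $F_\epsilon$ is the reflection flow-out $\text{Fl}(\text{graph}\, r_H|_{T^*_H S^2})$, and on the support of $1-\chi_\epsilon$ it avoids both $T^*H$ and $N^*H$; since $r_H$ has no fixed points away from $T^*H$, this part of the canonical relation is disjoint from the diagonal $\Delta_{T^*S^2}$. Composing with the $I_2$-flow relation of $\Pi_\ell$ and intersecting with the diagonal for the trace, a clean-composition / stationary-phase computation in the spirit of \cite{TZ1} shows that the critical set has strictly positive codimension relative to the $\Psi$DO case, and the standard semi-classical trace formula yields $O_\epsilon(1)$ rather than $O_\epsilon(\ell)$.

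The main obstacle is the last step. One must verify that the composite relation $C_{F_\epsilon}\circ C_\Pi$ meets $\Delta_{T^*S^2}$ cleanly and compute the dimension of the intersection. In action-angle coordinates $r_H$ swaps the two components of $T^*_HS^2 \cap T_c$, so closure on the diagonal forces the $I_2$-flow parameter to take an isolated non-trivial value, which cuts the dimension of the critical set by at least one relative to the pseudo-differential case. Once this clean-intersection picture is established, the half-density symbol computation from Proposition~\ref{prop:symbolofPandF} together with the semi-classical symbol of $\Pi_\ell$ from Proposition~\ref{prop:piell} yields the required $O_\epsilon(1)$ bound, and the proof is complete.
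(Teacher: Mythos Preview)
Your overall strategy—splitting $\bar V_\epsilon = P_\epsilon + F_\epsilon$ and showing the pseudo-differential piece carries the full leading term—is exactly the paper's approach, and your treatment of the $P_\epsilon$ contribution is essentially identical to the paper's.

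The gap is in the $F_\epsilon$ analysis. You assert that ``closure on the diagonal forces the $I_2$-flow parameter to take an isolated non-trivial value, which cuts the dimension of the critical set by at least one,'' but you neither verify that such a flow time exists nor explain why a codimension-one drop would yield $O_\epsilon(1)$ rather than, say, $O_\epsilon(\ell^{1/2})$. The paper's argument is different and sharper: intersecting the composite relation $C_{R\Pi}$ with the diagonal amounts to asking when $(x',\xi')$ and $r_H(x',\xi')$ lie on the \emph{same} $I_2$-bicharacteristic. For $(x',\xi') \notin T^*H$ these two covectors sit over the same base point $x' \in H$ but with opposite normal components, so if they were connected by the $I_2$-flow the projection of that bicharacteristic to $S^2$ would self-intersect—which is impossible on a convex surface of revolution. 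Hence the intersection $C_{R\Pi}\cap\Delta_{T^*S^2}$ is contained in the flow-out of $T^*H$, and there the symbol of $F_\epsilon$ vanishes identically because of the tangential cutoff $\widehat\chi^t_\epsilon$. The order-$\ell$ coefficient in the trace is therefore zero, giving $O_\epsilon(1)$ directly. Your dimension heuristic misses this mechanism: the point is not that the critical set is lower-dimensional, but that the principal symbol vanishes on all of it.
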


\begin{proof}

By proposition \ref{prop:symbolofPandF}, we have $\bar{V}_\epsilon = P_\epsilon + F_\epsilon$. From propositions \ref{prop:angularpdo},\ref{prop:piell}, and \ref{prop:symbolofPandF}, the contribution of the $P_\epsilon$ term in the trace is equal to 

$$\ell\left( \int_\Sigma f(p_\theta)\sigma(P_\epsilon) \, d\mu_L \right) + O_\epsilon(1)$$

Since the symbol of $P_\epsilon$ is a function of $I_1$ and $I_2$, it is constant on each torus $T_c$ and the leading term is equal thus equal to 

$$(2\pi)^2 \ell \int_{-1}^1 f(c) \sigma(P_\epsilon)(c,1) \, dc$$

which is the stated term in the proposition. To finish the proof, we need to show that the contribution to the trace from the $F_\epsilon$ piece is of size $O_\epsilon(1)$. For this, note that $f\left(\frac{D_\theta}{\ell}\right) F_\epsilon \Pi_\ell$ is a semi-classical Fourier integral operator of order $\frac{1}{2}$ associated to the canonical relation 

$$C_{R\Pi} = \{(x,\xi,y,\eta) ~|~ (x,\xi) = \Phi_\mathbf{t}(r_H(x',\xi')) ~\text{and}~ (\Phi_\mathbf{t}(x',\xi'),y,\eta) \in C_\Pi \}$$

The trace is controlled by the symbol on the intersection $C_{R\Pi} \cap \Delta_{T^*S^2}$. This is equal to the set

$$\{ (\Phi_\mathbf{t}(x',\xi'), \Phi_\mathbf{t}(r_H(x',\xi')) \in C_\Pi ~|~ \mathbf{t} \in T^2, (x',\xi') \in T^*_HS^2 \} $$

And this is equivalent to the statement that $(x',\xi')$ and $r_H(x',\xi')$ lie along the same $I_2$ bicharacteristic. But if $(x',\xi') \notin T^*H$, this would mean that the projection of the $I_2$ bicharacteristic to $S^2$ has a self-intersection, which is impossible. Thus it must be that $(x',\xi') = r_H(x',\xi') \in T^*H$. Due to the cutoff $\chi_\epsilon$, the symbol of $F_\epsilon$ vanishes on the aforementioned set. Hence the order $\ell$ term in the trace vanishes as claimed.

\end{proof}

\begin{Prop}
The normalizing factor $M_\ell = \sum_{m = -\ell}^\ell \lpnorm{\varphi^\ell_m}{2}{H}^2$ satisfies

$$\lim_{\ell \to \infty} \frac{M_\ell}{\ell} = 4\pi \int_{-1}^1 \frac{\omega_2(c,1)}{\sqrt{1 - \frac{c^2}{K(c,1)^2 a(r_0)^2}}} \, dc $$
\end{Prop}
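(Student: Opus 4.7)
The plan is to apply the two immediately preceding propositions with $f \equiv 1$ and then pass to the limit $\epsilon \to 0$. Although those propositions were stated for $f \in C_c^\infty(-1,1)$, the case $f \equiv 1$ is even simpler since $f(D_\theta/\ell) = I$: combining Proposition \ref{prop:restrictiondecomp} with the invariance of matrix elements under torus averaging gives directly
$$
M_\ell = \Tr(\bar V_\epsilon \Pi_\ell) + \Tr(\overline{(\gamma_H^*\gamma_H)_{\leq \epsilon}}\Pi_\ell) + O_\epsilon(\ell^{-\infty}).
$$

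I would then bound the middle term exactly as in the preceding trace reduction: Cauchy--Schwarz controls it by a sum of terms of the form $\sum_m \|\gamma_H \widehat{\chi}^j_\epsilon \varphi^\ell_m\|_{L^2(H)}^2$, and the pointwise Weyl law (applied to $\widehat{\chi}^j_\epsilon$, whose symbol is supported in a set of Liouville volume $O(\epsilon)$ in $\Sigma$) gives $\Tr(\overline{(\gamma_H^*\gamma_H)_{\leq \epsilon}}\Pi_\ell) = O(\epsilon \ell)$. For the main term, the symbol calculation from the proof of the preceding proposition applies verbatim with $f \equiv 1$: the $P_\epsilon$ piece contributes $4\pi \ell \int_{-1}^1 (1-\chi_\epsilon)(c) D(c)\,dc + O_\epsilon(1)$, where
$$
D(c) = \frac{\omega_2(c,1)}{\sqrt{1 - c^2/(K(c,1)^2 a(r_0)^2)}},
$$
and the $F_\epsilon$ piece is killed by the cutoff on the only component of $C_{R\Pi} \cap \Delta_{T^*S^2}$ (which lies in $T^*H$), yielding an $O_\epsilon(1)$ error.

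Combining these, dividing by $\ell$, taking $\limsup$ and $\liminf$ as $\ell \to \infty$, and then letting $\epsilon \to 0$ finishes the proof provided dominated convergence applies to $\int_{-1}^1 (1-\chi_\epsilon)(c) D(c)\,dc$. The one genuine analytic point is the integrability of $D$ on $(-1,1)$: near $c = \pm 1$ one has $K(c,1)^2 a(r_0)^2 \to 1$ (as noted in the discussion following Theorem 1.1), so the singularity of $D$ is of type $(1-c^2)^{-1/2}$, which is integrable, while $\omega_2(c,1)$ remains bounded and smooth. With $D \in L^1(-1,1)$ and $(1-\chi_\epsilon) \to 1$ boundedly a.e., DCT yields $\lim_{\ell \to \infty} M_\ell/\ell = 4\pi \int_{-1}^1 D(c)\,dc$, as claimed. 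The main obstacle is really just bookkeeping: ensuring that setting $f \equiv 1$ in the previous symbol calculation does not introduce new issues at the endpoints of $[-1,1]$, which is handled by keeping $\epsilon > 0$ fixed until after $\ell \to \infty$.
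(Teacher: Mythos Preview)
Your proposal is correct and follows essentially the same route as the paper: decompose $M_\ell$ via Proposition~\ref{prop:restrictiondecomp} with $f\equiv 1$, bound the $(\gamma_H^*\gamma_H)_{\leq \epsilon}$ contribution by $O(\epsilon\ell)$ exactly as in the preceding remainder estimate, evaluate $\Tr(\bar V_\epsilon\Pi_\ell)$ by the symbol computation of the preceding proposition, and then send $\ell\to\infty$ followed by $\epsilon\to 0$ using $D\in L^1(-1,1)$. Your explicit justification of the $(1-c^2)^{-1/2}$-type integrability at $c=\pm 1$ is slightly more detailed than the paper's, but the argument is otherwise identical.
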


\begin{proof}

In the same fashion as the proof of proposition 5.3, we can write 

$$M_\ell = \text{Trace} \, \bar{V}_\epsilon \Pi_\ell + R'(\epsilon,\ell)$$

$$\text{Trace} \, \bar{V}_\epsilon \Pi_\ell = \ell \int_{-1}^1 (1 - \chi_\epsilon)(c)\frac{\omega_2(c,1)}{\sqrt{1 - \frac{c^2}{K(c,1)^2 a(r_0)^2}}} \, dc + O_\epsilon(1)$$

where $\limsup_{\ell \to \infty} \abs{R'(\epsilon,\ell)}/\ell = O(\epsilon) $. Since 

$$\int_{-1}^1 (1 - \chi_\epsilon)(c)\frac{\omega_2(c,1)}{\sqrt{1 - \frac{c^2}{K(c,1)^2 a(r_0)^2}}} \, dc \to \int_{-1}^1 \frac{\omega_2(c,1)}{\sqrt{1 - \frac{c^2}{K(c,1)^2 a(r_0)^2}}} \, dc$$ 

as $\epsilon \to 0$, the statement follows.

\end{proof}

Now in light of propositions 5.1,5.2,and 5.3, for $f \in C^\infty_c(-1,1)$, 

$$\langle \mu_\ell , f \rangle = \frac{1}{M_\ell}\sum_{m = - \ell}^{\ell} \lpnorm{\varphi^\ell_m}{2}{H}^2 f\left( \frac{m}{\ell} \right) = 4\pi \frac{\ell}{M_\ell} \left(\int_{-1}^1 f(c)(1 - \chi_\epsilon)(c)  \frac{\omega_2(c,1)}{\sqrt{1 - \frac{c^2}{K(c,1)^2 a(r_0)^2}}} \, dc \right) + R''(\epsilon,\ell)$$

where $\limsup \abs{R''(\epsilon,\ell)} = O(\epsilon)$. Taking $\ell \to \infty$ and then $\epsilon \to 0$ finishes the proof of theorem 1.1 (a) when $f$ is compactly supported. We can freely upgrade this statement to $f \in C^0([-1,1])$ because $$\frac{\omega_2(c,1)}{\sqrt{1 - \frac{c^2}{K(c,1)^2 a(r_0)^2}}}$$ is an $L^1$ function of $c$ on $[-1,1]$.

\section{Unitary conjugation to the round sphere}

In this section we prove theorem 1.2. That is, we construct a unitary Fourier integral operator $W : L^2(S^2,g) \to L^2(S^2,g_{can})$ such that

$$
\begin{cases}
W \hat{I}_2 W^* = A \\
W D_\theta W ^* = D_\theta
\end{cases}
$$

Where $A = \sqrt{-\Delta_{g_can} + \frac{1}{4}} - \frac{1}{2}$ is the degree operator on the round sphere. We begin by describing the outline of the proof. First, using the canonical transformation $\chi: T^*S^2 \setminus 0 \to T^*S^2 \setminus 0$ of section 2.2 which satisfies $\chi^*I_2 = \abs{\xi}_{g_can}$, $\chi^*p_\theta = p_\theta$, we can find a unitary Fourier integral operator $W_0$ so that $[W_0,D_\theta] = 0$ and 

\begin{equation}
W_0 \hat{I}_2 W_0^* = A + R_{-1}
\end{equation}

where $R_{-1}$ is a pseudo-differential operator of order $-1$. We then use the  averaging argument of Guillemin (See \cite{Gu}) to show that there exists a unitary pseudo-differential operator $F$ of order zero such that

\begin{equation}
F(A + R_{-1}) F^* = A + R^{\#}_{-1}
\end{equation}

where $[A , R^{\#}_{-1}] = 0$ and $[F , D_\theta] = 0$. This is contained in propositions \ref{prop:firstconj}, \ref{prop:secondconj}, and \ref{prop:thirdconj}. Then $W = FW_0$ is a unitary Fourier integral operator which commutes with $D_\theta$ and conjugates $\hat{I}_{2}$ to $A + R_{-1}^{\#}$, where $R_{-1}$ is an order $-1$ pseudo commuting with $A$. Using the fact that $A + R_{-1}^{\#}$ and $A$ have the same spectrum, we easily see that $R^{\#}$ is a finite rank operator.

\begin{Prop} \label{prop:firstconj}
There exists a unitary Fourier integral operator $W_0$ such that $W_0 \hat{I}_2 W_0^* = A + R_{-1}$ where $R_{-1} \in \Psi^{-1}$ is self-adjoint and $[W_0, D_\theta] = 0$. In this case we also have $[R_{-1}, D_\theta] = 0$
\end{Prop}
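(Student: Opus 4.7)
The strategy is to first produce a unitary FIO quantizing $\chi$ that commutes with $D_\theta$, and then correct it by a self-adjoint $\Psi^{-1}$-conjugation to drop the pseudo-differential error from order $0$ to order $-1$. By standard FIO theory there is a unitary, homogeneous, order zero FIO quantizing $\chi$. To make it commute with $D_\theta$ I exploit that $\chi^* p_\theta = p_\theta$ forces $\chi$ to commute with the flow of $H_{p_\theta}$: conjugating a chosen unitary quantization by $e^{itD_\theta}$ gives, by Egorov, a unitary FIO with the same canonical graph, which I then average over $t \in S^1$ and re-normalize by polar decomposition (the average is still elliptic with the original principal symbol). Call the resulting operator $\tilde W_0$.

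By Egorov, $\tilde W_0 \widehat{I}_2 \tilde W_0^*$ is a self-adjoint elliptic element of $\Psi^1$ with principal symbol $I_2 \circ \chi^{-1} = |\xi|_{g_{can}} = \sigma_1(A)$, so
$$B_0 := \tilde W_0 \widehat{I}_2 \tilde W_0^* - A$$
lies in $\Psi^0$, is self-adjoint, and commutes with $D_\theta$. Since $A$ and $A+B_0$ both have spectrum in $\mathbb{Z}_{\geq 0}$, one has $\exp(2\pi i A) = \exp(2\pi i (A+B_0)) = I$. Writing $\exp(2\pi i(A+B_0)) = e^{2\pi i A}\Phi(2\pi)$ with $\Phi$ the Duhamel interaction evolution $\dot \Phi = i e^{-itA} B_0 e^{itA}\Phi$, the identity $\Phi(2\pi) = I$ forces, at the level of principal symbols, the closed-orbit average to vanish:
$$\int_0^{2\pi} \sigma_0(B_0)(G^t_{can}(x,\xi))\, dt = 0,$$
where $G^t_{can}$ is the Hamilton flow of $|\xi|_{g_{can}}$.

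This vanishing is exactly the obstruction to the transport equation $H_{|\xi|_{g_{can}}} b = \sigma_0(B_0)$ on $T^*S^2 \setminus 0$, which now admits a smooth, degree $-1$ homogeneous, $\partial_\theta$-invariant solution $b$ ($\partial_\theta$-invariance is preserved because $\sigma_0(B_0)$ is $\partial_\theta$-invariant and $H_{|\xi|_{g_{can}}}$ commutes with $\partial_\theta$). Quantize $b$ to a self-adjoint $B \in \Psi^{-1}$ with $[B,D_\theta] = 0$, and set $W_0 := e^{iB}\tilde W_0$. Then $W_0$ is a unitary FIO with canonical relation $\chi$, still commuting with $D_\theta$, and
$$W_0 \widehat{I}_2 W_0^* = e^{iB}(A+B_0)e^{-iB} = A + B_0 + i[B,A] + O(\Psi^{-1});$$
by construction $\sigma_0(B_0 + i[B,A]) = \sigma_0(B_0) - \{b, |\xi|_{g_{can}}\} = 0$, so $R_{-1} := W_0 \widehat{I}_2 W_0^* - A \in \Psi^{-1}$ is self-adjoint, and $[R_{-1}, D_\theta] = 0$ is automatic since $\widehat{I}_2$, $A$ and $W_0$ all commute with $D_\theta$.

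The main obstacle is the first step: producing a unitary, $D_\theta$-equivariant quantization of $\chi$ in the correct FIO class. The averaging/polar-decomposition recipe has to be carried out carefully to keep the resulting operator a genuine FIO with canonical relation $\chi$. Once that is in hand, the integer-spectrum constraint delivers the closed-orbit vanishing needed for the transport equation, and the one-order improvement by conjugation with $e^{iB}$ is standard.
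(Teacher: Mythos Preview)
Your approach is correct but takes a longer route than the paper. You and the paper agree on the first step: pick any unitary FIO $U_0$ quantizing $\chi$, average it against $e^{itD_\theta}$, and polar-decompose to obtain a unitary $\tilde W_0$ commuting with $D_\theta$. At that point the paper simply observes that both $\widehat{I}_2$ (by Colin de Verdi\`ere's construction) and $A$ have vanishing subprincipal symbol, and since the subprincipal symbol is invariant under conjugation by a unitary FIO, the remainder $\tilde W_0 \widehat{I}_2 \tilde W_0^* - A$ already lies in $\Psi^{-1}$; no further correction is needed and one takes $W_0 = \tilde W_0$. Your route instead treats the remainder as a generic $B_0 \in \Psi^0$ and kills its principal symbol by solving a cohomological equation along the round geodesic flow and conjugating by $e^{iB}$. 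This is essentially the first iteration of the Guillemin averaging that the paper carries out in the \emph{next} proposition, so you are front-loading work the paper defers; the payoff is that you do not need the subprincipal-symbol fact for $\widehat{I}_2$.

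Two small repairs. First, the solution $b$ of $H_{|\xi|_{g_{can}}} b = \sigma_0(B_0)$ is homogeneous of degree $0$, not $-1$ (the geodesic vector field preserves homogeneity), so $B \in \Psi^0$; the argument still goes through because $e^{iB}$ is then a unitary $\Psi^0$ operator and the higher BCH commutators land in $\Psi^{-1}$. Second, your Duhamel computation only gives $\int_0^{2\pi} \sigma_0(B_0)\circ G^t_{can}\,dt \in 2\pi\mathbb{Z}$ pointwise; you need one more line (e.g.\ comparing the bottoms of the spectra of $A$ and $A+B_0$, or invoking Weinstein's band-cluster asymptotics) to force this constant integer to be $0$.
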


\begin{proof}

Let $U_0$ be any unitary Fourier integral operator whose canonical relation is the graph of $\chi$. Then by Egorov's theorem,

\begin{equation} \label{eq:egorov}
U_0\hat{I}_2U_0^* = A + R
\end{equation}

Where $R \in \Psi^0$. Both the left hand side and $A$ are self-adjoint, so $R$ is as well. The subprincipal symbols of both the left hand side and $A$ vanish which implies that $\sigma(R) = 0$ so $R \in \Psi^{-1}$. We write $R_{-1}$ from now on to emphasize this. The only thing left to do is to show that we can modify $U_0$ in order to make it commute with $D_\theta$. We let $V(t) = \exp itD_\theta$ and set 

\begin{equation}
W'_0 = \frac{1}{2\pi} \int_0^{2\pi} V(t)U_0V(-t) \, dt
\end{equation}

$W'_0$ is a Fourier integral operator with the same canonical relation as $U_0$, although it may not be unitary. To fix this, replace $W'_0$ with $W_0 = [W_0'(W_0')^*]^{-\frac{1}{2}}W'_0$. Then $W_0W_0^* = I$ and $W$ is still a Fourier integral operator associated to the same canonical graph since $W_0'(W'_0)^*$ is pseudo-differential. $W'_0$ commutes with $D_\theta$ so $W_0$ does as well. Note that if one replaces $U_0$ by $W_0$, \eqref{eq:egorov} is still valid since both operators are associated to the graph of $\chi$. Since $\widehat{I}_2$ and $A$ commute with $D_\theta$, we automatically have that $R_{-1}$ does as well.

\end{proof}

The following two propositions constitute a slight modifcation of what Guillemin refers to as the averaging lemma, found in \cite{Gu}. The goal of the modification is to make sure the conjugations commute with $D_\theta$.

\begin{Prop} \label{prop:secondconj}
Let $R_{-1}$ be as in proposition \ref{prop:firstconj}. Then there exists a unitary pseudo-differential operator $F \in \Psi^0$, a self-adjoint operator $R_{-1}^{\#} \in \Psi^{-1}$ which commutes with $A$ and a smoothing operator $R_{-\infty}$ such that $F(A + R_{-1})F^* = A + R_{-1}^{\#} + R_{-\infty}$  and $[F, D_\theta] = 0$
\end{Prop}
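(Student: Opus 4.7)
The plan is to carry out a $D_\theta$-equivariant version of Guillemin's averaging argument. Define the $A$-average
\[
R_{-1}^{\#} \;=\; \frac{1}{2\pi}\int_0^{2\pi} e^{itA}\, R_{-1}\, e^{-itA}\, dt .
\]
Since $A$ has integer spectrum, the integrand is $2\pi$-periodic in $t$, so the average is a well-defined self-adjoint element of $\Psi^{-1}$ which commutes with $A$ by construction. Because $R_{-1}$ commutes with $D_\theta$ by Proposition \ref{prop:firstconj} and $[A,D_\theta] = 0$, the integrand commutes with $D_\theta$ for every $t$, hence so does $R_{-1}^{\#}$. The target is a unitary $F \in \Psi^0$ with $[F,D_\theta]=0$ that absorbs the non-invariant piece $R_{-1} - R_{-1}^{\#}$ modulo a smoothing error.

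The first step is to solve the cohomological equation
\[
[A, Q_0] \;=\; i\bigl(R_{-1}^{\#} - R_{-1}\bigr)
\]
modulo $\Psi^{-2}$, for a self-adjoint $Q_0 \in \Psi^{-1}$ commuting with $D_\theta$. At the principal symbol level this reads $\{p_2,\sigma(Q_0)\} = \sigma(R_{-1}) - \sigma(R_{-1}^{\#})$, where $p_2 = |\xi|_{g_{can}}$. The right-hand side has zero mean on every periodic geodesic-flow orbit on $S^2$ by construction of $R_{-1}^{\#}$, so the explicit primitive
\[
q(x,\xi) \;=\; \frac{1}{2\pi}\int_0^{2\pi} s \,\bigl(\sigma(R_{-1}) - \sigma(R_{-1}^{\#})\bigr)\!\bigl(G^s(x,\xi)\bigr)\, ds
\]
solves the symbol equation and lies in $S^{-1}$. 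Since $\{p_\theta,p_2\}=0$, the flow $G^s$ commutes with the $\partial_\theta$-action, and together with the $\partial_\theta$-invariance of the integrand this makes $q$ itself $\partial_\theta$-invariant. Taking any self-adjoint quantization and then averaging over $e^{itD_\theta}$ produces the desired $Q_0$. Setting $F_0 = e^{iQ_0}$ and expanding by the Hadamard identity,
\[
F_0(A + R_{-1})F_0^{*} \;=\; A + R_{-1} + i[Q_0, A] + O(\Psi^{-3}) \;=\; A + R_{-1}^{\#} + R_{-2}^{(1)} ,
\]
with $R_{-2}^{(1)} \in \Psi^{-2}$ self-adjoint and commuting with $D_\theta$.

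I would then iterate: at step $k$ solve $[A, Q_k] = i\bigl((R_{-k}^{(k-1)})^{\#} - R_{-k}^{(k-1)}\bigr)$ for a self-adjoint $Q_k \in \Psi^{-k}$ commuting with $D_\theta$, by the same integration-along-orbits construction applied to the error from the previous step. Conjugating by $e^{iQ_k}$ replaces the order-$(-k)$ non-invariant error by its $A$-average and pushes the new error down to order $-k-1$, while perturbing the previously-constructed invariant summands only at strictly lower orders. A Borel-type asymptotic composition $F \sim \cdots e^{iQ_2}\,e^{iQ_1}\,e^{iQ_0}$ then produces a unitary $F \in \Psi^0$ with $[F,D_\theta]=0$ and
\[
F(A + R_{-1})F^{*} \;=\; A + R_{-1}^{\#} + R_{-\infty} ,
\]
where, after relabeling, $R_{-1}^{\#}$ denotes the asymptotic sum of the invariant pieces produced at each stage; it is self-adjoint, lies in $\Psi^{-1}$, and commutes with both $A$ and $D_\theta$.

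The main obstacle at each stage is the joint equivariance requirement: the primitive defined by integration along geodesic orbits must be $\partial_\theta$-invariant \emph{and} admit a self-adjoint, $D_\theta$-commuting quantization, without disrupting the symbol identity being enforced. The first part follows from $\{p_\theta,p_2\}=0$; the second is handled uniformly by the symmetrization $Q \mapsto \tfrac{1}{2}(Q + Q^{*})$ and the averaging $Q \mapsto \tfrac{1}{2\pi}\int_0^{2\pi} e^{itD_\theta}Q\, e^{-itD_\theta}\, dt$, each of which fixes the principal symbol. The only real calculation is the subleading-symbol bookkeeping confirming that each iteration improves the error by exactly one order.
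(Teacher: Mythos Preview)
Your argument is correct and follows essentially the same route as the paper: both carry out Guillemin's averaging scheme, solving the cohomological equation $[A,\cdot] = R - R_{av}$ at the symbol level by integration along the periodic geodesic flow, enforcing $D_\theta$-commutation by averaging the resulting operator over $e^{itD_\theta}$ (your self-adjoint $Q_k$ is the paper's skew-adjoint $S_{-k}$ up to a factor of $i$), and then iterating with a Borel summation. The one point the paper makes explicit that you elide is that the asymptotic product of the unitaries $e^{iQ_k}$ need not be unitary on the nose, only modulo a smoothing operator; the paper repairs this with the standard polar-decomposition replacement $F \mapsto (FF^*)^{-1/2}F$, which preserves $[F,D_\theta]=0$ and leaves the conjugation identity intact modulo $\Psi^{-\infty}$.
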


\begin{proof}

We let $U(t) = \exp (itA)$ be the unitary group generated by $A$ and for a pseudo-differential operator $B$, define as before, its average with respect to $U(t)$ by

\begin{equation}
B_{av} = \frac{1}{2\pi} \int_0^{2\pi} U(t)BU(-t) \, dt
\end{equation}

Then $B_{av}$ commutes with $A$ and is self-adjoint if $B$ is. We recall the statement of lemma 2.1 in \cite{Gu}: If $R$ is any self-adjoint operator of order $-k$, $k \in \mathbb{N}$, there exists a skew-adjoint pseudodifferential operator $S$ of order $-k$ so that $[A , S] = R - R_{av} + \Psi^{-k - 1}$. This statement is equivalent to the vanishing of the principal symbol of $[A,S] - (R - R_{av})$ which is a first order transport equation for $\sigma(S)$. This can be solved for $\sigma(S)$ explicitly on $S^*S^2$ , which can be extended as a degree $-k$ homogeneous function to $T^*S^2 \setminus 0$. Since it is imaginary, we can choose $S$ to be skew-adjoint. Given such an $S$, let $V(t) = \exp(itD_\theta)$ and set $\bar{S} = (2\pi)^{-1} \int_0^{2\pi} V(t)SV(-t) \, dt$. Then $\bar{S}$ is still skew-adjoint and commutes with $D_\theta$. If we further suppose that $R$ commutes with $D_\theta$ then

\begin{align}
[A,\bar{S}] =& \frac{1}{2\pi} \int_0^{2\pi} V(t)[A,S]V(-t) \, dt \\
=& \frac{1}{2\pi}\int_0^{2\pi} V(t)(R - R_{av})V(-t) \, dt + \Psi^{-k-1} \\
=& \, R - R_{av} + \Psi^{-k-1}
\end{align}

Hence we may assume from the outset that $[S, D_\theta] = 0$. This fact allows us to build the operator $F$ in stages. If $R_{-1}$ is the operator in proposition \ref{prop:firstconj}, then using the above procedure we can choose $S_{-1} \in \Psi^{-1}$ skew-adjoint such that 

\begin{equation}
[A , S_{-1}] = R_{-1} - (R_{-1})_{av} + R_{-2}
\end{equation}

where $R_{-2} \in \Psi^{-2}$ and so that $[S_{-1}, D_\theta] = 0$. Then setting $F_1 = \exp {S_{-1}}$, a direct calculation shows that

\begin{equation}
F_1(A + R_{-1})F_1^* = A + (R_{-1})_{av} + R_{-2}
\end{equation}

By construction, $F_1$ is unitary and commutes with $D_\theta$. We can now choose $S_{-2}$ skew-adjoint commuting with $D_\theta$ such that 

\begin{equation}
[A , S_{-2}] = R_{-2} - (R_{-2})_{av} + R_{-3}
\end{equation}

Then, with $F_2 = \exp {S_{-2}} \exp{S_{-1}}$ we have

\begin{equation}
F_2(A + R_{-1}) = A + (R_{-1})_{av} + (R_{-2})_{av} + R_{-3}
\end{equation}

Continuing in this way, we get a sequence of unitary operators $$F_k = \exp S_{-k} \cdots \exp S_{-1}$$ so that $F_k$ commutes with $D_\theta$ and 

\begin{equation}
F_k(A + R_{-1})F_k^* = A + (R_{-1})_{av} + \cdots + (R_{-k})_{av} + R_{-k - 1}
\end{equation}

We also note that $F_{k+1} - F_k \in \Psi^{-k-1}$. Let $F' \sim \sum_{k=1}^\infty (F_{k+1} - F_k)$, $R \sim \sum_{k=1}^\infty (R_{-k})_{av}$, and $R_{-1}^{\#} = R_{av}$. Then we know that $R_{-1}^{\#} - R \in \Psi^{-\infty}$ and if we put $F = F' + F_1$ we have $F - F_k \in \Psi^{-k}$. It is then easy to check that

\begin{equation}
F(A + R_{-1})F^* - (A + R_{-1}^{\#}) \in \Psi^{-\infty}
\end{equation}

Furthermore, since all of the $F_k$ commute with $D_\theta$, we can choose $F$ so that it does as well. As in the proof of proposition \ref{prop:firstconj}, $F$ may not be unitary. This is fixed in the same way, by replacing $F$ with $(FF^*)^{-\frac{1}{2}}F$. More explicitly, let $G = FF^* - I$. Note that $F = F_k + \Psi^{-k}$ which implies that $G$ is a smoothing operator. By the functional calculus, we can find a self-adjoint operator $K$ so that $(I + K)^2 = (I + G)^{-1}$ and if we replace $F$ by $(I + K)F$, then $F$ is unitary, $[F, D_\theta] = 0$, and we still have $F - F_k \in \Psi^{-k}$ since $K$ is a smoothing operator.

\end{proof}

\begin{Prop} \label{prop:thirdconj}
Suppose that $R^{\#}_{-1}$ and $R_{-\infty} \in \Psi^{-\infty}$ are as in proposition \ref{prop:secondconj} and that $\text{Spec}(A + R^{\#}_{-1} + R_{-\infty}) = \text{Spec}(A) = \mathbb{N}$. Then there exists a unitary operator $L$ and $R^{\#} \in \Psi^{-1}$, self-adjoint, such that $[R^{\#}, A] = 0$ and

\begin{equation}
L(I + R + R_{-\infty})L^* = I + R^{\#}  
\end{equation}

\vspace{3mm}

Furthermore, $L - I$ is a smoothing operator and $[L, D_\theta] = 0$

\end{Prop}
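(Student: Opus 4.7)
The plan is to extend the averaging scheme of Proposition \ref{prop:secondconj} into the smoothing regime, and then use the isospectral hypothesis to convert a modulo-smoothing conjugation into an exact one. Write $B = A + R^{\#}_{-1} + R_{-\infty}$; the goal is a smoothing perturbation $L$ of the identity such that $L B L^{*} = A + R^{\#}$ exactly.

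First I would iterate the Guillemin averaging argument used in Proposition \ref{prop:secondconj}, now on the smoothing tail. At each stage, solve the transport equation on $S^{*}S^{2}$ to produce a skew-adjoint $S_{j} \in \Psi^{-\infty}$ with $[S_{j}, D_\theta] = 0$ whose commutator with $A$ cancels the non-$A$-commuting part of the current smoothing remainder. Composing the exponentials into $L'_{k} = \exp(S_{k}) \cdots \exp(S_{1})$, Borel-summing, and applying the $(L'(L')^{*})^{-1/2} L'$ unitarization trick from the previous two propositions, I would produce a unitary $L'$ commuting with $D_\theta$, with $L' - I$ smoothing, such that
\begin{equation*}
L' B (L')^{*} = A + R^{\#}_{0} + E_{-\infty},
\end{equation*}
where $R^{\#}_{0} \in \Psi^{-1}$ is self-adjoint, $[R^{\#}_{0}, A] = 0$, and $E_{-\infty} \in \Psi^{-\infty}$ is an $A$-commuting smoothing remainder (the Borel-summed averaging allows all non-$A$-commuting smoothing residues to be absorbed into commuting ones in the limit).

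Next I would invoke the spectral hypothesis. The operator $T := R^{\#}_{0} + E_{-\infty}$ commutes with $A$ and therefore preserves each finite-dimensional eigenspace $E_\ell$ of $A$. Since $L'$ is unitary, $\mathrm{Spec}(A+T) = \mathrm{Spec}(B) = \mathbb{N}$, so the eigenvalues $\ell + \mu$ of $(A+T)|_{E_\ell}$ must be nonnegative integers. Since $\|T|_{E_\ell}\| \to 0$ as $\ell \to \infty$ (coming from $R^{\#}_{0} \in \Psi^{-1}$ and the rapid decay of the matrix elements of $E_{-\infty}$ on high-frequency subspaces), this forces $T|_{E_\ell} = 0$ for all sufficiently large $\ell$, so $T$ is of finite rank. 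A finite-rank unitary $L''$, block-diagonal in the simultaneous $A$- and $D_\theta$-eigendecomposition, then corrects $A+T$ to $A + R^{\#}$ with $R^{\#}$ self-adjoint, $A$-commuting, and of finite rank (in particular in $\Psi^{-1}$). Setting $L = L'' L'$ gives the desired unitary: $L - I$ is smoothing (since $L' - I$ is smoothing and $L'' - I$ is finite rank), $[L, D_\theta] = 0$, and $L B L^{*} = A + R^{\#}$ holds exactly.

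The main obstacle is controlling convergence of the iterated averaging in the smoothing regime. Unlike in Proposition \ref{prop:secondconj}, commuting with $A$ no longer lowers pseudo-differential order, so the order-reduction argument used there does not apply directly; instead one must run Borel summation on the smoothing corrections themselves and verify that the accumulated $L' - I$ remains in $\Psi^{-\infty}$. The isospectral hypothesis is the genuinely new input beyond the standard Guillemin averaging formalism: it is what promotes the resulting modulo-smoothing normal form to the exact identity required by the proposition.
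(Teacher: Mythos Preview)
Your step 1 does not work as written. The transport-equation/averaging scheme of Proposition \ref{prop:secondconj} is a \emph{symbol-level} construction: one solves $\{|\xi|_{g_{can}}, \sigma(S)\} = \sigma(R) - \sigma(R_{av})$ on $S^*S^2$, and the gain comes from the fact that the remainder $[A,S] - (R - R_{av})$ drops one pseudo-differential order. For a smoothing $R_{-\infty}$ the full symbol is zero, so the transport equation is vacuous and produces $S_j = 0$. You note this obstacle yourself, but the proposed fix---``Borel summation on the smoothing corrections themselves''---is not meaningful: Borel summation requires an asymptotic series graded by decreasing order, and inside $\Psi^{-\infty}$ there is no such grading to descend along. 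So the mechanism that made the iteration in Proposition \ref{prop:secondconj} converge is simply absent here, and there is no reason the procedure terminates or that the accumulated $L'-I$ stays controlled.

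The paper abandons symbolic calculus at this stage and argues spectrally, following Guillemin \cite{Gu}. Since $R_{-1}^{\#}$ already commutes with $A$, the spectrum of $A + R_{-1}^{\#}$ lies in clusters of width $O(k^{-1})$ around each integer $k$; the smoothing perturbation $R_{-\infty}$ moves these clusters only by rapidly decaying amounts. One writes the spectral projections $\pi_k$ (for $A + R_{-1}^{\#}$) and $\pi'_k$ (for $B = A + R_{-1}^{\#} + R_{-\infty}$) as contour integrals of the respective resolvents over a circle of radius $\tfrac12$ about $k$, and the resolvent identity gives
\[
\|(A+R_{-1}^{\#})^N(\pi_k\pi'_k - \pi'_k)\| \le C\,\|(A+R_{-1}^{\#})^N R_{-\infty}\pi'_k\|
\]
for all $N$ and large $k$. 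Guillemin's construction then builds, block by block, unitaries $L_k: V'_k \to V_k$ from the near-isometries $\pi_k|_{V'_k}$; these are functions of $\pi_k, \pi'_k$ and hence commute with $D_\theta$. Assembling $L$ from the $L_k$ (with a finite-rank correction on low modes) gives a unitary intertwining the eigenspace decompositions, so that $L B L^*$ preserves each $V_k$ and therefore commutes with $A$; the resolvent estimate is exactly what shows $L - I \in \Psi^{-\infty}$. Your step 2 (using isospectrality to force finite rank) is correct, but in the paper it appears as a separate proposition \emph{after} $L$ has been built; it is not used in the construction of $L$ itself.
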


\begin{proof}
Let $V_k$ denote the $k^{th}$ eigenspace of $A$ and $V'_k$ the $k^{th}$ eigenspace of $A + R_{-1}^{\#} + R_{-\infty}$. Also let $\pi_k$ and $\pi'_k$ denote orthogonal projection onto these subspaces. Finally let $P_k = \pi'_k$ restricted to $V'_k$. First we show that there is a $C > 0$ so that for all $N \geq 0$ and $k$ sufficiently large

\begin{equation}
||(A + R_{-1}^{\#})^N (P_k - \pi'_k)||_{L^2}  \leq C ||(A + R_{-1}^{\#})^N R_{-\infty} \pi'_k ||_{L^2} 
\end{equation}

To do this, we note that the spectrum of $A + R_{-1}^{\#}$ consists of bands of the form $\lambda_k^j = k + \mu_k^j$ where $\abs{\mu^j_k} = O(k^{-1})$. Hence for $k$ sufficiently large, the entire band is contained in a ball of radius $\frac{1}{4}$ around $k$. Let $\gamma_k$ be a circle of radius $\frac{1}{2}$ centered at $k \in \mathbb{N}$. Then for $k$ sufficiently large,

\begin{equation}
\pi_k = \frac{1}{2\pi i} \int_{\gamma_k} (\lambda - (A + R_{-1}^{\#}))^{-1} \, d\lambda
\end{equation}

and

\begin{equation}
\pi'_k = \frac{1}{2\pi i} \int_{\gamma_k} (\lambda - (A + R_{-1}^{\#} + R_{-\infty} ))^{-1} \, d\lambda
\end{equation}

Hence 

\begin{equation}
(A + R_{-1}^{\#})^N( \pi_k \pi'_k - \pi'_k) = \frac{1}{2\pi i} \int_{\gamma_{k}} (\lambda - (A + R_{-1}^{\#}))^{-1}(A + R_{-1}^{\#})^N R_{-\infty} \pi'_k (\lambda - (A + R_{-1}^{\#} + R_{-\infty}))^{-1} \, d\lambda
\end{equation}

For $\lambda \in \gamma_k$, the distance between $\lambda$ and the spectrum of both $A + R_{-1}^{\#}$ and $A + R_{-1}^{\#} + R_{-\infty}$ is bounded below by $\frac{1}{4}$. Hence the norms of both resolvents are bounded by $4$, which implies the norm of the left hand side is bounded by $2 ||(A + R_{-1}^{\#})^N R_{-\infty}\pi'_k ||_{L^2}$. Now suppose that we choose $k \geq k_0$ so that the above estimate holds. Then, repeating the argument on p. 255 of \cite{Gu} we build a sequence of unitary operators $L_k : V'_k \to V_k$. Since $L_k$ is a function of $P_k$ and $A$ commutes with $D_\theta$, each $L_k$ does as well. Define the unitary operator $L$ by declaring $L = L_k$ on $V'_k$ for $k \geq k_0$ sufficiently large so that the above estimate holds. To define $L$ on $\bigoplus_{1 \leq k \leq k_0} V'_k$, let $U_k$ denote the eigenspace of $\widehat{I}_2$ of eigenvalue $k$. and let $\varphi^k_m$ be an orthonormal basis of $U_k$ consisting of joint eigenfunctions of $D_\theta$. Then $W\varphi^k_m$ is a basis of $V'_k$ which are also joint eigenfunctions of $D_\theta$. Define $L$ by taking $W\varphi^k_m$ to the corresponding standard spherical harmonic of joint eigenvalue $(k,m)$. $L$ clearly commutes with $D_\theta$ as well as $A$. Also, by construction $L(A + R_{-1}^{\#} + R_{-\infty})L^* = A + L(R^{\#}_{-1} + R_{-\infty})L^*$ preserves each $V_k$ eigenspace, so commutes with $A$. This implies that $L(R^{\#}_{-1} + R_{-\infty})L^* = R^{\#}$ commutes with $A$. Finally the estimate above is used to prove that $L - I$ is a smoothing operator in the same way as in \cite{Gu}.

\end{proof}

\begin{Prop} \label{prop:nofiniterank}
Suppose that $\text{Spec}(A + R_{-1}^{\#}) = \text{Spec}(A) = \mathbb{N}$ where $R_{-1}^{\#} \in \Psi^{-1}$ is self-adjoint and commutes with $A$. Then $R_{-1}^{\#}$ is a finite rank operator.
\end{Prop}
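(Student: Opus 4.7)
The plan is to use the commutation of $R_{-1}^{\#}$ with $A$ to diagonalize $R_{-1}^{\#}$ simultaneously with $A$, then use the decay of the operator norm of a $\Psi^{-1}$ operator on high eigenspaces of $A$ together with the integrality of the spectrum of $A + R_{-1}^{\#}$ to force $R_{-1}^{\#}$ to vanish on all but finitely many eigenspaces.

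More precisely, let $V_k = \ker (A - k)$ denote the $k$-th eigenspace of $A$, which is the space of degree-$k$ spherical harmonics and has dimension $2k+1$. Since $[A, R_{-1}^{\#}] = 0$, $R_{-1}^{\#}$ preserves each $V_k$, and because $R_{-1}^{\#}$ is self-adjoint we can choose an orthonormal basis of $V_k$ consisting of eigenvectors of $R_{-1}^{\#}|_{V_k}$ with real eigenvalues $\mu_k^{j}$, $1 \le j \le 2k+1$. The spectrum of $A + R_{-1}^{\#}$ restricted to $V_k$ is then $\{k + \mu_k^j\}_{j=1}^{2k+1}$. The first key step is the standard fact that a self-adjoint pseudo-differential operator of order $-1$ satisfies an $L^2$-operator-norm bound $\|R_{-1}^{\#}|_{V_k}\|_{L^2 \to L^2} = O(k^{-1})$; indeed $R_{-1}^{\#}$ maps $H^s$ continuously to $H^{s+1}$, and elements of $V_k$ have $\|u\|_{H^1} \lesssim (k+1)\|u\|_{L^2}$ so the claim follows by duality. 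In particular, for all $k$ sufficiently large (say $k \ge k_0$) we have $|\mu_k^j| < 1/2$ for every $j$.

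Next, by hypothesis $\text{Spec}(A + R_{-1}^{\#}) = \mathbb{N}$, so each number $k + \mu_k^j$ must be a non-negative integer. For $k \ge k_0$ the inequality $|\mu_k^j| < 1/2$ leaves the only possibility $k + \mu_k^j = k$, i.e.\ $\mu_k^j = 0$ for all $j$. Therefore $R_{-1}^{\#}|_{V_k} = 0$ for every $k \ge k_0$, and the range of $R_{-1}^{\#}$ is contained in the finite-dimensional subspace $\bigoplus_{k < k_0} V_k$. This shows that $R_{-1}^{\#}$ has finite rank.

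I do not anticipate a serious obstacle here; the only nontrivial input is the operator-norm decay $\|R_{-1}^{\#}|_{V_k}\| = O(k^{-1})$, which is immediate from the mapping property of a $\Psi^{-1}$ operator on Sobolev scales applied to functions that are eigenfunctions of $A$ (equivalently of $\sqrt{-\Delta_{g_{can}} + 1/4} - 1/2$). The only minor point to watch is that the statement assumes $\text{Spec}(A + R_{-1}^{\#}) = \mathbb{N}$, which is exactly the conclusion of Proposition \ref{prop:thirdconj} applied to the spectrum of $\widehat{I}_2$; in particular, the argument is purely spectral and does not require comparing multiplicities between the two operators.
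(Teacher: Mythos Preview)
Your proof is correct and follows essentially the same argument as the paper: diagonalize $R_{-1}^{\#}$ on each eigenspace $V_k$ using the commutation with $A$, use the $\Psi^{-1}$ order to get $|\mu_k^j| = O(k^{-1})$, and then invoke the integrality of $\text{Spec}(A + R_{-1}^{\#})$ to force $\mu_k^j = 0$ for large $k$. Your write-up is somewhat more detailed (e.g.\ spelling out the Sobolev mapping argument for the eigenvalue decay and the $|\mu_k^j| < 1/2$ threshold), but the logic is identical.
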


\begin{proof}
Since $R^{\#}$ commutes with $A$, we can choose an orthonormal basis of $V_k$, $e^k_j$ satisfying $R^{\#}e^k_j = \mu^k_j e_j$. Since $R^{\#} \in \Psi^{-1}$, we have $\abs{\mu^k_j} = O(k^{-1})$. The fact that $\text{Spec}(A + R^{\#}) = \mathbb{N}$ implies that for $k$ large, $R^{\#}|_{V_k} = 0$ which shows that $R^{\#}$ is finite rank.

\end{proof}

\begin{bibdiv}
\begin{biblist}

\bib{GuS}{book}{
   author={Guillemin, Victor},
   author={Sternberg, Shlomo},
   title={Semi-classical analysis},
   publisher={International Press, Boston, MA},
   date={2013},
   pages={xxiv+446},
   isbn={978-1-57146-276-3},
   review={},
}

\bib{LPDO1}{book}{
   author={H\"{o}rmander, Lars},
   title={The analysis of linear partial differential operators. IV},
   series={Classics in Mathematics},
   note={},
   publisher={Springer-Verlag, Berlin},
   date={2009},
   pages={},
   isbn={978-3-642-00117-8},
   review={},
   doi={10.1007/978-3-642-00136-9},
}

\bib{LPDO4}{book}{
   author={H\"{o}rmander, Lars},
   title={The analysis of linear partial differential operators. I},
   series={Classics in Mathematics},
   note={},
   publisher={Springer-Verlag, Berlin},
   date={2003},
   pages={x+440},
   isbn={3-540-00662-1},
   review={},
   doi={10.1007/978-3-642-61497-2},
}

\bib{cbms}{book}{
   author={Zelditch, Steve},
   title={Eigenfunctions of the Laplacian on a Riemannian manifold},
   series={CBMS Regional Conference Series in Mathematics},
   volume={125},
   publisher={},
   date={2017},
   pages={xiv+394},
   isbn={978-1-4704-1037-7},
   review={},
}

\bib{CdV}{article}{
   author={Colin de Verdi\`ere, Yves},
   title={Spectre conjoint d'op\'{e}rateurs pseudo-diff\'{e}rentiels qui commutent.
   II. Le cas int\'{e}grable},
   language={},
   journal={Math. Z.},
   volume={171},
   date={1980},
   number={1},
   pages={51--73},
   issn={0025-5874},
   review={},
   doi={},
}

\bib{L}{article}{
   author={Lerman, Eugene},
   title={Contact toric manifolds},
   journal={J. Symplectic Geom.},
   volume={1},
   date={2003},
   number={4},
   pages={785--828},
   issn={1527-5256},
   review={},
}

\bib{Gu}{article}{
   author={Guillemin, Victor},
   title={Band asymptotics in two dimensions},
   journal={Adv. in Math.},
   volume={42},
   date={1981},
   number={3},
   pages={248--282},
   issn={0001-8708},
   review={},
   doi={},
}

\bib{W}{article}{
   author={Weinstein, Alan},
   title={Asymptotics of eigenvalue clusters for the Laplacian plus a
   potential},
   journal={Duke Math. J.},
   volume={44},
   date={1977},
   number={4},
   pages={883--892},
   issn={0012-7094},
   review={},
}

\bib{Z1}{article}{
   author={Zelditch, Steve},
   title={Fine structure of Zoll spectra},
   journal={J. Funct. Anal.},
   volume={143},
   date={1997},
   number={2},
   pages={415--460},
   issn={0022-1236},
   review={},
   doi={},
}

\bib{DG}{article}{
   author={Duistermaat, J. J.},
   author={Guillemin, V. W.},
   title={The spectrum of positive elliptic operators and periodic
   bicharacteristics},
   journal={Invent. Math.},
   volume={29},
   date={1975},
   number={1},
   pages={39--79},
   issn={0020-9910},
   review={},
   doi={},
}

\bib{BGT}{article}{
   author={Burq, N.},
   author={G\'{e}rard, P.},
   author={Tzvetkov, N.},
   title={Restrictions of the Laplace-Beltrami eigenfunctions to
   submanifolds},
   language={},
   journal={Duke Math. J.},
   volume={138},
   date={2007},
   number={3},
   pages={445--486},
   issn={0012-7094},
   review={},
   doi={},
}

\bib{To}{article}{
   author={Toth, John A.},
   title={$L^2$-restriction bounds for eigenfunctions along curves in the
   quantum completely integrable case},
   journal={Comm. Math. Phys.},
   volume={288},
   date={2009},
   number={1},
   pages={379--401},
   issn={0010-3616},
   review={},
   doi={},
}

\bib{GT}{article}{
   author={Galkowski, Jeffrey},
   author={Toth, John A.},
   title={Pointwise bounds for joint eigenfunctions of quantum completely
   integrable systems},
   journal={Comm. Math. Phys.},
   volume={375},
   date={2020},
   number={2},
   pages={915--947},
   issn={0010-3616},
   review={},
   doi={},
}

\bib{TZ1}{article}{
   author={Toth, John A.},
   author={Zelditch, Steve},
   title={Quantum ergodic restriction theorems: manifolds without boundary},
   journal={Geom. Funct. Anal.},
   volume={23},
   date={2013},
   number={2},
   pages={715--775},
   issn={1016-443X},
   review={},
   doi={},
}

\bib{ZZ}{article}{
   author={Zhou, P.},
   author={Zelditch, S.},
   title={Central Limit theorem for toric K\"ahler manifolds},
   journal={to appear in PAMQ, issue in honor of D.H. Phong},
   volume={},
   date={},
   number={},
   pages={},
   issn={1016-443X},
   review={},
   doi={},
}

\end{biblist}
\end{bibdiv}

\end{document}